\def\phi{\varphi}
\def\rho{\varrho}
\def\epsilon{\varepsilon}
\numberwithin{equation}{section}
\theoremstyle{plain}
\newtheorem{theorem}[equation]{Theorem}
\newtheorem{lemma}[equation]{Lemma}
\newtheorem{proposition}[equation]{Proposition}
\newtheorem{corollary}[equation]{Corollary}
\theoremstyle{definition}
\newtheorem{definition}[equation]{Definition}
\theoremstyle{remark}
\newtheorem{remark}[equation]{Remark}
\newtheorem{example}[equation]{Example}
\renewcommand{\leq}{\leqslant}
\renewcommand{\geq}{\geqslant}
\begin{document}
\title[Weigthed function spaces]{On the function spaces of general weights}
\author[D. Drihem]{Douadi Drihem}
\address{Douadi Drihem\\
M'sila University\\
Department of Mathematics\\
Laboratory of Functional Analysis and Geometry of Spaces\\
M'sila 28000, Algeria.}
\email{douadidr@yahoo.fr, douadi.drihem@univ-msila.dz}
\thanks{ }
\date{\today }
\subjclass[2010]{ Primary: 42B25, 42B35; secondary: 46E35.}

\begin{abstract}
The aim of this paper is twofold. Firstly, we chatacterize the Besov spaces $%
\dot{B}_{p,q}(\mathbb{R}^{n},\{t_{k}\})$ and the Triebel-Lizorkin spaces $%
\dot{F}_{p,q}(\mathbb{R}^{n},\{t_{k}\})$ for $q=\infty $. Secondly, under
some suitable assumptions on the $p$-admissible weight sequence $\{t_{k}\}$,
we prove that%
\begin{equation*}
\dot{A}_{p,q}(\mathbb{R}^{n},\{t_{k}\})=\dot{A}_{p,q}(\mathbb{R}%
^{n},t_{j}),\quad j\in \mathbb{Z},
\end{equation*}%
in the sense of equivalent quasi-norms, with $\dot{A}$ $\in \{\dot{B},\dot{F}%
\}$. Moreover, we find a necessary and sufficient conditions for the
coincidence of the spaces $\dot{A}_{p,q}(\mathbb{R}^{n},t_{i}),i\in \{1,2\}$.
\end{abstract}

\keywords{ Besov space, Hardy space, Triebel-Lizorkin space, Muckenhoupt
class.}
\maketitle

\section{Introduction}

Function spaces of generalized smoothness have been introduced by several
authors. We refer, for instance, to Cobos and Fernandez \cite{CF88}, Goldman 
\cite{Go79} and \cite{Go83}, and Kalyabin \cite{Ka83}; see also Kalyabin and
Lizorkin \cite{Kl87}.

Besov \cite{B03}, \cite{B05.1} and \cite{B05.2} defined function spaces of
variable smoothness and obtained their characterizations by differences,
interpolation, embeddings and extension. Such spaces are a special case of
the so-called 2-microlocal function spaces. The concept of 2-microlocal
analysis, or 2-microlocal function spaces, is due to Bony \cite{Bo84}. These
type of function spaces have been studied in detail in \cite{K8}.

Dominguez and Tikhonov \cite{DT} gave a treatment of function spaces with
logarithmic smoothness (Besov, Sobolev, Triebel-Lizorkin), including various
new characterizations for Besov norms in terms of different, sharp estimates
for Besov norms of derivatives and potential operators (Riesz and Bessel
potentials) in terms of norms of functions themselves and sharp embeddings
between the Besov spaces defined by differences and by Fourier-analytical
decompositions as well as between Besov and Sobolev/Triebel-Lizorkin spaces.

More general Besov spaces with variable smoothness were explicitly studied
by Ansorena and Blasco \cite{AB96}, including atomic decomposition. The
wavelet decomposition (with respect to a compactly supported wavelet basis
of Daubechies type) of nonhomogeneous Besov spaces of generalized smoothness
was achieved by Almeida \cite{Al}.

More general function spaces of generalized smoothness can be found in \cite%
{BoHo06}, \cite{FL06} and \cite{HaS08}, and reference therein.

The theory of spaces of generalized smoothness\ had a remarkable development
in part due to its usefulness in applications. For instance, they appear in
the study of trace spaces on fractals, see Edmunds and Triebel \cite{ET96}
and \cite{ET99}, where they introduced the spaces $B_{p,q}^{s,\Psi }$, where 
$\Psi $ is a so-called admissible function, typically of log-type near $0$.
For a complete treatment of these spaces we refer the readers to the work of
Moura \cite{Mo01}.

Tyulenev has introduced in \cite{Ty15} a new family of Besov spaces of
variable smoothness which cover many classes of Besov spaces, where the norm
on these spaces was defined\ with the help of classical differences.

In \cite{D20} and \cite{D20.1} the author introduced Besov and
Triebel-Lizorkin spaces $\dot{B}_{p,q}(\mathbb{R}^{n},\{t_{k}\})$ and $\dot{F%
}_{p,q}(\mathbb{R}^{n},\{t_{k}\})$ with general smoothness and presented
some their properties, such as the $\varphi $-transform characterization in
the sense of Frazier and Jawerth, the smooth atomic, molecular and wavelet
decomposition, and the characterization of Besov spaces $\dot{B}_{p,q}(%
\mathbb{R}^{n},\{t_{k}\})$\ in terms of the difference relations. Also,
Sobolev type embeddings are given. These studies were all restricted to
bounded exponents $q$.

The purpose of the present paper is to extend the results of \cite{D20.1} to
the case $q=\infty $. In addition, we present some properties of the spaces $%
\dot{F}_{p,q}(\mathbb{R}^{n},\{t_{k}\})$. More precisely, under some
suitable assumptions on the $p$-admissible weight sequence $\{t_{k}\}$ we
prove that%
\begin{equation*}
\dot{F}_{p,q}(\mathbb{R}^{n},\{t_{k}\})=\dot{F}_{p,q}(\mathbb{R}%
^{n},t_{j}),\quad j\in \mathbb{Z},
\end{equation*}%
in the sense of equivalent quasi-norms. With the help of this result, some
remarks on weighted Hardy spaces are given. Moreover, the coincidence of the
spaces $\dot{A}_{p,q}(\mathbb{R}^{n},t_{i}),i\in \{1,2\}$ is proved, which
is probably not available in the existing literature. All results of this
paper can be extended to inhomogeneous Besov and Triebel-Lizorkin spaces of
general weights.

Throughout this paper, we denote by $\mathbb{R}^{n}$ the $n$-dimensional
real Euclidean space, $\mathbb{N}$ the collection of all natural numbers and 
$\mathbb{N}_{0}=\mathbb{N}\cup \{0\}$. The letter $\mathbb{Z}$ stands for
the set of all integer numbers.\ The expression $f\lesssim g$ means that $%
f\leq c\,g$ for some independent constant $c$ (and non-negative functions $f$
and $g$), and $f\approx g$ means $f\lesssim g\lesssim f$.

For $x\in \mathbb{R}^{n}$ and $r>0$ we denote by $B(x,r)$ the open ball in $%
\mathbb{R}^{n}$ with center $x$ and radius $r$. By supp$f$ we denote the
support of the function $f$, i.e., the closure of its non-zero set. If $%
E\subset {\mathbb{R}^{n}}$ is a measurable set, then $|E|$ stands for the
(Lebesgue) measure of $E$ and $\chi _{E}$ denotes its characteristic
function. By $c$ we denote generic positive constants, which may have
different values at different occurrences.

A weight is a nonnegative locally integrable function on $\mathbb{R}^{n}$
that takes values in $(0,\infty )$ almost everywhere. For measurable set $%
E\subset \mathbb{R}^{n}$ and a weight $\gamma $, $\gamma (E)$ denotes 
\begin{equation*}
\int_{E}\gamma (x)dx.
\end{equation*}%
Given a measurable set $E\subset \mathbb{R}^{n}$ and $0<p\leq \infty $, we
denote by $L_{p}(E)$ the space of all functions $f:E\rightarrow \mathbb{C}$
equipped with the quasi-norm 
\begin{equation*}
\big\|f|L_{p}(E)\big\|:=\Big(\int_{E}\left\vert f(x)\right\vert ^{p}dx\Big)^{%
\frac{1}{p}}<\infty ,
\end{equation*}%
with $0<p<\infty $ and%
\begin{equation*}
\big\|f|L_{\infty }(E)\big\|:=\underset{x\in E}{\text{ess-sup}}\left\vert
f(x)\right\vert <\infty .
\end{equation*}%
For $0<p<\infty $, the space $L_{p,\infty }(E)$ is defined as the set of all
measurable functions $f$\ on $E$ such that%
\begin{equation*}
\big\|f|L_{p,\infty }(E)\big\|:=\sup_{\delta >0}\delta |\{x\in
E:|f(x)|>\delta \}|^{\frac{1}{p}}<\infty .
\end{equation*}%
Let $0<p,q\leq \infty $. The space $L_{p}(\ell _{q})$ is defined to be the
set of all sequences $\{f_{k}\}$ of functions such that%
\begin{equation*}
\big\|\{f_{k}\}|L_{p}(\ell _{q})\big\|:=\Big\|\Big(\sum_{k=-\infty }^{\infty
}|f_{k}|^{q}\Big)^{1/q}|L_{p}(\mathbb{R}^{n})\Big\|<\infty
\end{equation*}%
with the usual modifications if $q=\infty $. Similarly, define $L_{p,\infty
}(\ell _{q})$ as the space of all sequences $\{f_{k}\}$ of functions such
that%
\begin{equation*}
\big\|\{f_{k}\}|L_{p,\infty }(\ell _{q})\big\|:=\Big\|\Big(\sum_{k=-\infty
}^{\infty }|f_{k}|^{q}\Big)^{1/q}|L_{p,\infty }(\mathbb{R}^{n})\Big\|<\infty
,
\end{equation*}%
with the usual modifications if $q=\infty $. For a function $f$ in $L_{1}^{%
\mathrm{loc}}$, we set 
\begin{equation*}
M_{A}(f):=\frac{1}{|A|}\int_{A}\left\vert f(x)\right\vert dx
\end{equation*}%
for any $A\subset \mathbb{R}^{n}$. Furthermore, we put%
\begin{equation*}
M_{A,p}(f):=\Big(\frac{1}{|A|}\int_{A}\left\vert f(x)\right\vert ^{p}dx\Big)%
^{\frac{1}{p}},
\end{equation*}%
with $0<p<\infty $. Further, given a measurable set $E\subset \mathbb{R}^{n}$
and a weight $\gamma $, we denote the space of all functions $f:\mathbb{R}%
^{n}\rightarrow \mathbb{C}$ with finite quasi-norm 
\begin{equation*}
\big\|f|L_{p}(\mathbb{R}^{n},\gamma )\big\|:=\big\|f\gamma |L_{p}(\mathbb{R}%
^{n})\big\|
\end{equation*}%
by $L_{p}(\mathbb{R}^{n},\gamma )$.

If $1\leq p\leq \infty $ and $\frac{1}{p}+\frac{1}{p^{\prime }}=1$, then $%
p^{\prime }$ is called the conjugate exponent of $p$.

The symbol $\mathcal{S}(\mathbb{R}^{n})$ is used to denote the set of all
Schwartz functions on $\mathbb{R}^{n}$. In what follows, $Q$ will denote an
cube in the space $\mathbb{R}^{n}$\ with sides parallel to the coordinate
axes and $l(Q)$\ will denote the side length of the cube $Q$. For $v\in 
\mathbb{Z}$ and $m\in \mathbb{Z}^{n}$, denote by $Q_{v,m}$ the dyadic cube,%
\begin{equation*}
Q_{v,m}:=2^{-v}([0,1)^{n}+m).
\end{equation*}%
For the collection of all such cubes we use $\mathcal{Q}:=\{Q_{v,m}:v\in 
\mathbb{Z},m\in \mathbb{Z}^{n}\}$.

\section{Maximal inequalities}

\subsection{Muckenhoupt weights}

The purpose of this subsection is to review some known properties of\
Muckenhoupt class.

\begin{definition}
Let $1<p<\infty $. We say that a weight $\gamma $ belongs to the Muckenhoupt
class $A_{p}(\mathbb{R}^{n})$ if there exists a constant $C>0$ such that for
every cube $Q$ the following inequality holds 
\begin{equation}
M_{Q}(\gamma )M_{Q,\frac{p^{\prime }}{p}}(\gamma ^{-1})\leq C.
\label{Ap-constant}
\end{equation}
\end{definition}

The smallest constant $C$ for which $\mathrm{\eqref{Ap-constant}}$ holds,
denoted by $A_{p}(\gamma )$. As an example, we can take%
\begin{equation*}
\gamma (x)=|x|^{\alpha },\quad \alpha \in \mathbb{R}.
\end{equation*}%
Then $\gamma \in A_{p}(\mathbb{R}^{n})$, $1<p<\infty $, if and only if $%
-n<\alpha <n(p-1)$.

For $p=1$ we rewrite the above definition in the following way.

\begin{definition}
We say that a weight $\gamma $ belongs to the Muckenhoupt class $A_{1}(%
\mathbb{R}^{n})$ if there exists a constant $C>0$ such that for every cube $%
Q $\ and for a.e.\ $y\in Q$ the following inequality holds 
\begin{equation}
M_{Q}(\gamma )\leq C\gamma (y).  \label{A1-constant}
\end{equation}
\end{definition}

The smallest constant $C$ for which $\mathrm{\eqref{A1-constant}}$ holds,
denoted by $A_{1}(\gamma )$. The above classes have been first studied by
Muckenhoupt\ \cite{Mu72} and use to characterize the boundedness of the
Hardy-Littlewood maximal function on $L^{p}(\gamma )$, see the monographs 
\cite{GR85} and \cite{L. Graf14}\ for a complete account on the theory of
Muckenhoupt weights.

\begin{lemma}
\label{Ap-Property}Let $1\leq p<\infty $.\newline
$\mathrm{(i)}$ If $\gamma \in A_{p}(\mathbb{R}^{n})$, then for any $1\leq
p<q $, $\gamma \in A_{q}(\mathbb{R}^{n})$.\newline
$\mathrm{(ii)}$ If $\gamma \in A_{p}(\mathbb{R}^{n})$, then for any $%
0<\varepsilon \leq 1$, $\gamma ^{\varepsilon }\in A_{p}(\mathbb{R}^{n})$.%
\newline
$\mathrm{(iii)}$ Suppose that $\gamma \in A_{p}(\mathbb{R}^{n})$ for some $%
1<p<\infty $. Then there exist a $1<p_{1}<p<\infty $ such that $\gamma \in
A_{p_{1}}(\mathbb{R}^{n})$.
\end{lemma}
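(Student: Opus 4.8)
The plan is to treat the three parts separately, reserving the real work for (iii). As a preliminary I would rewrite the defining inequality $\eqref{Ap-constant}$: since $p'/p=1/(p-1)$ one has $M_{Q,p'/p}(\gamma^{-1})=\big(M_Q(\gamma^{-1/(p-1)})\big)^{p-1}$, so that $\gamma\in A_p(\Rn)$ is equivalent to
\[
M_Q(\gamma)\big(M_Q(\gamma^{-1/(p-1)})\big)^{p-1}\le A_p(\gamma)
\]
for every cube $Q$. For part (i) I read the second factor as the power mean of $\gamma^{-1}$ on the probability space $(Q,dx/|Q|)$ of order $1/(p-1)$, which is monotone increasing in the order. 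Since $1/(q-1)\le 1/(p-1)$ when $q\ge p$, this gives $\big(M_Q(\gamma^{-1/(q-1)})\big)^{q-1}\le\big(M_Q(\gamma^{-1/(p-1)})\big)^{p-1}$, whence $A_q(\gamma)\le A_p(\gamma)$ for $q>p>1$; the embedding $A_1(\Rn)\subset A_q(\Rn)$ follows directly by inserting the pointwise bound $\eqref{A1-constant}$ into the second factor.

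For part (ii) I would again invoke Jensen's inequality, now using the concavity of $t\mapsto t^\varepsilon$ for $0<\varepsilon\le 1$. This yields $M_Q(\gamma^\varepsilon)\le(M_Q\gamma)^\varepsilon$ and, applied to $\gamma^{-1/(p-1)}$, the bound $\big(M_Q(\gamma^{-\varepsilon/(p-1)})\big)^{p-1}\le\big(M_Q(\gamma^{-1/(p-1)})\big)^{\varepsilon(p-1)}$. Multiplying these and comparing with the reformulated condition gives $A_p(\gamma^\varepsilon)\le A_p(\gamma)^\varepsilon$, and the same concavity argument disposes of the case $p=1$ through $\eqref{A1-constant}$.

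Part (iii) is the substantive statement and the main obstacle; its proof rests on the reverse H\"older inequality. I would pass to the dual weight $\sigma:=\gamma^{-1/(p-1)}$. A direct computation with the exponents (using $p'-1=1/(p-1)$) shows that $\gamma\in A_p(\Rn)$ if and only if $\sigma\in A_{p'}(\Rn)$, so in particular $\sigma$ is an $A_\infty$ weight. The key ingredient is that every such weight satisfies a reverse H\"older inequality: there exist $r>1$ and $C'>0$ with
\[
\big(M_Q(\sigma^r)\big)^{1/r}\le C'\,M_Q(\sigma)
\]
for all cubes $Q$. Granting this, I set $p_1:=1+(p-1)/r$, so that $1<p_1<p$ and $(p-1)/(p_1-1)=r$; then $\gamma^{-1/(p_1-1)}=\sigma^r$, and the reverse H\"older inequality together with $r(p_1-1)=p-1$ gives
\[
\big(M_Q(\gamma^{-1/(p_1-1)})\big)^{p_1-1}\le(C')^{p-1}\big(M_Q(\gamma^{-1/(p-1)})\big)^{p-1}.
\]
Multiplying by $M_Q(\gamma)$ and invoking the $A_p$ bound for $\gamma$ yields $\gamma\in A_{p_1}(\Rn)$, as required.

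The hard part is the reverse H\"older inequality itself, which is where all the analysis lives; everything else is exponent bookkeeping. I would establish it in two moves. First, show that $\sigma\in A_\infty(\Rn)$ satisfies the quantitative comparison $\sigma(E)/\sigma(Q)\lesssim(|E|/|Q|)^\delta$ for some $\delta>0$ and all measurable $E\subset Q$; this is extracted from the average inequality by a Calder\'on--Zygmund stopping-time decomposition of $Q$ at heights that are multiples of $M_Q(\sigma)$. Second, integrate this distributional estimate against $\sigma^{r-1}$ over the level sets of $\sigma$ by a layer-cake argument, choosing $r-1$ small enough relative to $\delta$ for the resulting geometric series to converge, thereby promoting the comparison to the desired $L^r$ bound.
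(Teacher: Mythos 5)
Your proof is correct, and it is precisely the classical argument from the monographs the paper itself cites for this material (Garc\'{\i}a-Cuerva--Rubio de Francia, Grafakos); the paper states Lemma \ref{Ap-Property} without proof, treating it as known. The one available economy: for part (iii), after the dual-weight reduction $\gamma \in A_{p}(\mathbb{R}^{n}) \Longleftrightarrow \gamma^{1-p'} \in A_{p'}(\mathbb{R}^{n})$, you can invoke the paper's own Theorem \ref{reverse Holder inequality} (reverse H\"{o}lder inequality, stated for all $A_{p}(\mathbb{R}^{n})$ weights, $1\leq p<\infty$) instead of re-deriving reverse H\"{o}lder via the Calder\'{o}n--Zygmund and layer-cake argument you sketch -- though that sketch is itself the standard and correct route.
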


The following theorem gives a useful property of $A_{p}(\mathbb{R}^{n})$
weights (reverse H\"{o}lder inequality), see \cite[Chapter 7]{L. Graf14}.

\begin{theorem}
\label{reverse Holder inequality}Let $1\leq p<\infty \ $and $\gamma \in
A_{p}(\mathbb{R}^{n})$. Then there exist a constants $C>0$ and $\varepsilon
_{\gamma }>0$\ depending only on $p$ and the $A_{p}(\mathbb{R}^{n})$
constant of $\gamma $, such that for every cube $Q$, 
\begin{equation*}
M_{Q,1+\varepsilon _{\gamma }}(\gamma )\leq CM_{Q}(\gamma ).
\end{equation*}%
Moreover $\gamma ^{1+\varepsilon _{\gamma }}\in A_{p}(\mathbb{R}^{n}).$
\end{theorem}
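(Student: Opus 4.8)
The plan is to derive the inequality from the self-improving ($A_\infty$) character of $\gamma$ through a Calderón--Zygmund stopping-time argument, and then obtain the ``moreover'' clause by a short duality computation. The first step is to record the $A_\infty$ consequence of $\eqref{Ap-constant}$: there exist $\alpha,\beta\in(0,1)$, depending only on $p$ and $A_p(\gamma)$, such that for every cube $Q$ and every measurable $E\subset Q$ one has the implication $|E|\le\alpha|Q|\Rightarrow\gamma(E)\le\beta\gamma(Q)$. To see this for $1<p<\infty$, I would apply H\"older's inequality to $|E|=\int_E\gamma^{1/p}\gamma^{-1/p}\,dx$ with exponents $p$ and $p'$ and read off, using the definition of $M_{Q,\frac{p'}{p}}(\gamma^{-1})$ and $\eqref{Ap-constant}$, that $\tfrac{|E|}{|Q|}\le\big(A_p(\gamma)\tfrac{\gamma(E)}{\gamma(Q)}\big)^{1/p}$; replacing $E$ by $Q\setminus E$ and rearranging converts this into the stated implication. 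The case $p=1$ follows either from $A_1(\mathbb{R}^n)\subset A_2(\mathbb{R}^n)$ (Lemma \ref{Ap-Property}(i)) or directly from $\eqref{A1-constant}$.

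Next I would fix a cube $Q$, set $\gamma_Q:=M_Q(\gamma)$, and for $t>1$ and each integer $k\ge0$ run the dyadic Calderón--Zygmund decomposition of $\gamma\chi_Q$ relative to the subcubes of $Q$ at height $\gamma_Q t^{k}$. This produces maximal selected cubes whose union $\Omega_k$ satisfies: $\gamma\le\gamma_Q t^{k}$ a.e.\ on $Q\setminus\Omega_k$; the average of $\gamma$ on each selected cube lies in $(\gamma_Q t^{k},2^{n}\gamma_Q t^{k}]$; and $\Omega_{k+1}\subset\Omega_k$. Comparing the two generations inside a fixed selected cube $Q_j^{k}$ gives $|\Omega_{k+1}\cap Q_j^{k}|\le\frac{2^{n}}{t}|Q_j^{k}|$, since each finer cube $Q_i^{k+1}\subset Q_j^{k}$ obeys $|Q_i^{k+1}|\le(\gamma_Q t^{k+1})^{-1}\gamma(Q_i^{k+1})$ and $\gamma(Q_j^{k})\le2^{n}\gamma_Q t^{k}|Q_j^{k}|$. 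Choosing $t:=2^{n}/\alpha$ makes the right-hand factor $\le\alpha|Q_j^{k}|$, so the $A_\infty$ estimate yields $\gamma(\Omega_{k+1}\cap Q_j^{k})\le\beta\gamma(Q_j^{k})$; summing over $j$ gives $\gamma(\Omega_{k+1})\le\beta\gamma(\Omega_k)$, hence $\gamma(\Omega_k)\le\beta^{k}\gamma(Q)$.

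To conclude the reverse H\"older inequality I would integrate by layers. Using $\gamma\le\gamma_Q t^{k+1}$ a.e.\ on $Q\setminus\Omega_{k+1}$, the telescoping $\Omega_0=\bigcup_{k\ge0}(\Omega_k\setminus\Omega_{k+1})$ up to a null set, and $\gamma\le\gamma_Q$ a.e.\ on $Q\setminus\Omega_0$, I obtain
\[
\int_Q\gamma^{1+\varepsilon}\le\gamma_Q^{\varepsilon}\gamma(Q)+\sum_{k\ge0}(\gamma_Q t^{k+1})^{\varepsilon}\gamma(\Omega_k)\le\gamma_Q^{\varepsilon}\gamma(Q)\Big(1+t^{\varepsilon}\sum_{k\ge0}(t^{\varepsilon}\beta)^{k}\Big).
\]
Choosing $\varepsilon>0$ so small that $t^{\varepsilon}\beta<1$ (which forces $\varepsilon$ to depend only on $n$, $p$ and $A_p(\gamma)$) makes the series converge to a finite constant $C$, whence $\frac1{|Q|}\int_Q\gamma^{1+\varepsilon}\le C\,\gamma_Q^{1+\varepsilon}$, i.e.\ $M_{Q,1+\varepsilon}(\gamma)\le C^{1/(1+\varepsilon)}M_Q(\gamma)$. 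The main obstacle here is purely the bookkeeping: verifying the two-generation comparison uniformly and checking that every constant, in particular the final exponent $\varepsilon$, depends only on $n$, $p$ and the $A_p(\mathbb{R}^n)$ constant of $\gamma$, as the statement requires.

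For the ``moreover'' clause, recall that for $1<p<\infty$ one has $\gamma\in A_p(\mathbb{R}^n)$ if and only if $\sigma:=\gamma^{-1/(p-1)}\in A_{p'}(\mathbb{R}^n)$; thus $\sigma$ is also an $A_\infty$ weight and admits its own reverse H\"older exponent $1+\varepsilon'$. Setting $\varepsilon_\gamma:=\min(\varepsilon,\varepsilon')$ (the inequality just proved persists for the smaller exponent by monotonicity of $M_{Q,s}$ in $s$) and $r:=1+\varepsilon_\gamma$, the reverse H\"older inequalities for $\gamma$ and for $\sigma$ give $M_Q(\gamma^{r})=M_{Q,r}(\gamma)^{r}\le C\,M_Q(\gamma)^{r}$ and, after rewriting in terms of $\sigma$, $M_{Q,\frac{p'}{p}}(\gamma^{-r})\le C\,M_{Q,\frac{p'}{p}}(\gamma^{-1})^{r}$. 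Multiplying these and invoking $\eqref{Ap-constant}$ shows $M_Q(\gamma^{r})M_{Q,\frac{p'}{p}}(\gamma^{-r})\le C\,A_p(\gamma)^{r}$, that is, $\gamma^{1+\varepsilon_\gamma}\in A_p(\mathbb{R}^n)$. In the case $p=1$ the same conclusion follows directly by raising $\eqref{A1-constant}$ to the power $r$ and combining it with the reverse H\"older inequality.
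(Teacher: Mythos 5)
Your proof is correct, and it follows essentially the same route as the paper's source: the paper does not prove this theorem itself but cites \cite[Chapter 7]{L. Graf14}, where the argument is exactly your Calder\'on--Zygmund stopping-time decomposition at geometrically increasing heights (with the $A_\infty$ absolute-continuity property $|E|\leq \alpha |Q| \Rightarrow \gamma(E)\leq \beta\gamma(Q)$ feeding the geometric decay $\gamma(\Omega_k)\leq \beta^k\gamma(Q)$), followed by the duality $\gamma\in A_p \Leftrightarrow \gamma^{-1/(p-1)}\in A_{p'}$ to upgrade the reverse H\"older inequality to the statement $\gamma^{1+\varepsilon_\gamma}\in A_p(\mathbb{R}^n)$. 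All the key steps check out, including the two-generation comparison $|\Omega_{k+1}\cap Q_j^k|\leq \frac{2^n}{t}|Q_j^k|$ and the choice of $\varepsilon$ with $t^{\varepsilon}\beta<1$, so there is nothing to repair.
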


\subsection{The weight class $\dot{X}_{\protect\alpha ,\protect\sigma ,p}$}

Let $0<p\leq \infty $. A weight sequence $\{t_{k}\}$ is called $p$%
-admissible if $t_{k}\in L_{p}^{\mathrm{loc}}(\mathbb{R}^{n})$ for all $k\in 
\mathbb{Z}$. We mention here that 
\begin{equation*}
\int_{E}t_{k}^{p}(x)dx<c(k)
\end{equation*}%
for any $k\in \mathbb{Z}$ and any compact set $E\subset \mathbb{R}^{n}$. For
a $p$-admissible weight sequence $\{t_{k}\}$\ we set%
\begin{equation*}
t_{k,m}:=\big\|t_{k}|L_{p}(Q_{k,m})\big\|,\quad k\in \mathbb{Z},m\in \mathbb{%
Z}^{n}.
\end{equation*}

Tyulenev\ \cite{Ty15}, \cite{Ty-151} and \cite{Ty-N-L} introduced the
following new weighted class\ and use it to study Besov spaces of variable
smoothness.

\begin{definition}
\label{Tyulenev-class}Let $\alpha _{1}$, $\alpha _{2}\in \mathbb{R}$, $%
p,\sigma _{1}$, $\sigma _{2}$ $\in (0,+\infty ]$, $\alpha =(\alpha
_{1},\alpha _{2})$ and let $\sigma =(\sigma _{1},\sigma _{2})$. We let $\dot{%
X}_{\alpha ,\sigma ,p}=\dot{X}_{\alpha ,\sigma ,p}(\mathbb{R}^{n})$ denote
the set of $p$-admissible weight sequences $\{t_{k}\}$ satisfying the
following conditions. There exist numbers $C_{1},C_{2}>0$ such that for any $%
k\leq j$\ and every cube $Q,$%
\begin{eqnarray}
M_{Q,p}(t_{k})M_{Q,\sigma _{1}}(t_{j}^{-1}) &\leq &C_{1}2^{\alpha _{1}(k-j)},
\label{Asum1} \\
(M_{Q,p}(t_{k}))^{-1}M_{Q,\sigma _{2}}(t_{j}) &\leq &C_{2}2^{\alpha
_{2}(j-k)}.  \label{Asum2}
\end{eqnarray}
\end{definition}

The constants $C_{1},C_{2}>0$ are independent of both the indexes $k$ and $j$%
.

\begin{remark}
$\mathrm{(i)}$\ We would like to mention that if $\{t_{k}\}$ satisfying $%
\mathrm{\eqref{Asum1}}$ with $\sigma _{1}=r\left( \frac{p}{r}\right)
^{\prime }$ and $0<r<p\leq \infty $, then $t_{k}^{p}\in A_{\frac{p}{r}}(%
\mathbb{R}^{n})$ for any $k\in \mathbb{Z}$ with\ $0<r<p<\infty $ and $%
t_{k}^{-r}\in A_{1}(\mathbb{R}^{n})$ for any $k\in \mathbb{Z}$\ with\ $%
p=\infty $.\newline
$\mathrm{(ii)}$ We say that $t_{k}\in A_{p}(\mathbb{R}^{n})$,\ $k\in \mathbb{%
Z}$, $1<p<\infty $ have the same Muckenhoupt constant if%
\begin{equation*}
A_{p}(t_{k})=c,\quad k\in \mathbb{Z},
\end{equation*}%
where $c$ is independent of $k$.\newline
$\mathrm{(iii)}$ Definition \ref{Tyulenev-class} is different from the one
used in \cite[Definition 2.7]{Ty15}, because we used the boundedness of the
maximal function on weighted Lebesgue spaces.
\end{remark}

\begin{example}
\label{Example1}Let $0<r<p<\infty $, a weight $\omega ^{p}\in A_{\frac{p}{r}%
}(\mathbb{R}^{n})$ and $\{s_{k}\}=\{2^{ks}\omega ^{p}\}_{k\in \mathbb{Z}}$, $%
s\in \mathbb{R}$. Clearly, $\{s_{k}\}_{k\in \mathbb{Z}}$ lies in $\dot{X}%
_{\alpha ,\sigma ,p}$ for $\alpha _{1}=\alpha _{2}=s$, $\sigma =(r(\frac{p}{r%
})^{\prime },p)$.
\end{example}

\begin{remark}
\label{Tyulenev-class-properties}Let $0<\theta \leq p\leq \infty $. Let $%
\alpha _{1}$, $\alpha _{2}\in \mathbb{R}$, $\sigma _{1},\sigma _{2}\in
(0,+\infty ]$, $\sigma _{2}\geq p$, $\alpha =(\alpha _{1},\alpha _{2})$ and
let $\sigma =(\sigma _{1}=\theta \left( \frac{p}{\theta }\right) ^{\prime
},\sigma _{2})$. Let a $p$-admissible weight sequence $\{t_{k}\}\in \dot{X}%
_{\alpha ,\sigma ,p}$. Then $\alpha _{2}\geq \alpha _{1}$, see \cite{D20}.
\end{remark}

As usual, we put%
\begin{equation*}
\mathcal{M(}f)(x):=\sup_{Q}\frac{1}{|Q|}\int_{Q}\left\vert f(y)\right\vert
dy,\quad f\in L_{1}^{\mathrm{loc}},
\end{equation*}%
where the supremum\ is taken over all cubes with sides parallel to the axis
and $x\in Q$. Also we set 
\begin{equation*}
\mathcal{M}_{\sigma }(f):=\left( \mathcal{M(}\left\vert f\right\vert
^{\sigma })\right) ^{\frac{1}{\sigma }},\quad 0<\sigma <\infty .
\end{equation*}

\begin{lemma}
\label{key-estimate1.2}Let $1<\theta \leq p<\infty $. Let $\{t_{k}\}$\ be a $%
p$-admissible\ weight\ sequence\ such that $t_{k}^{p}\in A_{\frac{p}{\theta }%
}(\mathbb{R}^{n})$, $k\in \mathbb{Z}$. Assume that $t_{k}^{p}$,\ $k\in 
\mathbb{Z}$, have the same Muckenhoupt constant, $A_{\frac{p}{\theta }%
}(t_{k}^{p})=C,k\in \mathbb{Z}$. Then%
\begin{equation}
\big\|\mathcal{M}(f_{k})|L_{p}(\mathbb{R}^{n},t_{k})\big\|\leq c\big\|%
f_{k}|L_{p}(\mathbb{R}^{n},t_{k})\big\|  \label{key-est1}
\end{equation}%
for all sequences of functions\ $f_{k}\in L_{p}(\mathbb{R}^{n},t_{k})$, $%
k\in \mathbb{Z}$, where $c>0$ is independent of $k$.
\end{lemma}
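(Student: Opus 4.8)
The plan is to recognize \eqref{key-est1} as nothing more than the classical Muckenhoupt weighted maximal inequality, once the weight notation is unwound. By the definition of the space $L_{p}(\mathbb{R}^{n},t_{k})$, namely $\big\|g|L_{p}(\mathbb{R}^{n},t_{k})\big\|=\big\|g\,t_{k}|L_{p}(\mathbb{R}^{n})\big\|$, the desired estimate is equivalent to
\[
\int_{\mathbb{R}^{n}}\big(\mathcal{M}(f_{k})(x)\big)^{p}\,t_{k}^{p}(x)\,dx\le c^{p}\int_{\mathbb{R}^{n}}|f_{k}(x)|^{p}\,t_{k}^{p}(x)\,dx,
\]
i.e.\ the boundedness of the Hardy--Littlewood maximal operator $\mathcal{M}$ on the weighted space $L^{p}(\mathbb{R}^{n};w_{k}\,dx)$ with Muckenhoupt weight $w_{k}:=t_{k}^{p}$. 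Since Muckenhoupt's theorem (recalled in the discussion following \eqref{A1-constant}) asserts precisely that $\mathcal{M}$ is bounded on $L^{p}(w\,dx)$ whenever $w\in A_{p}(\mathbb{R}^{n})$, the whole matter reduces to two verifications: that each $w_{k}$ lies in $A_{p}(\mathbb{R}^{n})$, and that the resulting operator norms are uniform in $k$.

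For the first point, I note that $1<\theta\le p$ forces $1<\tfrac{p}{\theta}\le p$, so by Lemma \ref{Ap-Property}(i) the hypothesis $t_{k}^{p}\in A_{\frac{p}{\theta}}(\mathbb{R}^{n})$ immediately yields $w_{k}=t_{k}^{p}\in A_{p}(\mathbb{R}^{n})$ for every $k\in\mathbb{Z}$. For the uniformity, I would invoke the standard monotonicity of the Muckenhoupt constant in the exponent: the factor $\big(\tfrac{1}{|Q|}\int_{Q}w^{-1/(s-1)}\big)^{s-1}$ defining $A_{s}$ is nonincreasing in $s$ (a power-mean/Jensen argument, since the inner power mean is nondecreasing in its order $\tfrac{1}{s-1}$), while the first factor $M_{Q}(w)$ does not depend on $s$. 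Taking the supremum over cubes gives $A_{p}(w)\le A_{p/\theta}(w)$ for $\tfrac{p}{\theta}\le p$. Combined with the standing assumption $A_{\frac{p}{\theta}}(t_{k}^{p})=C$ for all $k$, this produces the uniform bound $A_{p}(w_{k})\le C$.

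Finally I would apply the quantitative form of Muckenhoupt's theorem, in which the operator norm of $\mathcal{M}$ on $L^{p}(w\,dx)$ depends only on $n$, $p$, and the $A_{p}$ constant of $w$. Applying it with $w=w_{k}$ and using $A_{p}(w_{k})\le C$ makes the resulting constant independent of $k$; rewriting the outcome in the $L_{p}(\mathbb{R}^{n},t_{k})$ notation then gives \eqref{key-est1} with a $k$-independent $c$.

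The only genuinely delicate point is the uniformity in $k$: the inequality itself is an immediate citation of the classical theory, but the claim that $c$ does not depend on $k$ rests on two facts, namely that Muckenhoupt's norm bound is governed solely by the $A_{p}$ constant, and that the common-constant hypothesis transfers from $A_{\frac{p}{\theta}}$ down to $A_{p}$. Both are standard, so I expect no serious obstacle here; the remaining work is essentially bookkeeping.
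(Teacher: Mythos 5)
Your proof is correct, and it is essentially the argument the paper delegates to its citation of \cite{D20}: unwinding the notation $\big\|g|L_{p}(\mathbb{R}^{n},t_{k})\big\|=\big\|g\,t_{k}|L_{p}(\mathbb{R}^{n})\big\|$ reduces \eqref{key-est1} to the boundedness of $\mathcal{M}$ on $L^{p}(t_{k}^{p}\,dx)$, which follows from the quantitative Muckenhoupt theorem once one knows $A_{p}(t_{k}^{p})\leq A_{\frac{p}{\theta}}(t_{k}^{p})=C$ uniformly in $k$. Both of your supporting steps --- the monotonicity of the $A_{s}$ constant in $s$ via the power-mean inequality (including the limiting case $\frac{p}{\theta}=1$, where one bounds the $A_{p}$ factor by the essential supremum of $t_{k}^{-p}$ on $Q$), and the fact that the operator norm of $\mathcal{M}$ on $L^{p}(w\,dx)$ depends only on $n$, $p$ and $A_{p}(w)$ --- are standard and correctly applied, so the $k$-independence of $c$ is secured.
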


For the proof, see\ e.g., \cite{D20}.

\begin{remark}
\label{r-estimates copy(2)}$\mathrm{(i)}$ We would like to mention that the
result of Lemma \ref{key-estimate1.2} is true if we assume that $t_{k}\in A_{%
\frac{p}{\theta }}(\mathbb{R}^{n})$,\ $k\in \mathbb{Z}$, $1<\theta <p<\infty 
$ with 
\begin{equation*}
A_{\frac{p}{\theta }}(t_{k}^{p})\leq c,\quad k\in \mathbb{Z},
\end{equation*}%
where $c$ is a positive constant independent of $k$. $\newline
\mathrm{(ii)}$ The property $\mathrm{\eqref{key-est1}}$ can be generalized
in the following way. Let $1<\theta \leq p<\infty $ and $\{t_{k}\}$ be a $p$
-admissible sequence such that $t_{k}^{p}\in A_{\frac{p}{\theta }}(\mathbb{R}%
^{n})$, $k\in \mathbb{Z}$.\newline
$\mathrm{\bullet }$ If $t_{k}^{p}$,\ $k\in \mathbb{Z}$ satisfies $\mathrm{%
\eqref{Asum1}}$, then 
\begin{equation*}
\big\|\mathcal{M(}f_{j})|L_{p}(\mathbb{R}^{n},t_{k})\big\|\leq c\text{ }%
2^{\alpha _{1}(k-j)}\big\|f_{j}|L_{p}(\mathbb{R}^{n},t_{j})\big\|
\end{equation*}%
holds for all sequences of functions $f_{j}\in L_{p}(\mathbb{R}^{n},t_{j})$, 
$j\in \mathbb{Z}$ and $j\geq k$, where $c>0$ is independent of $k$ and $j$,
see \cite{D20}. $\newline
\mathrm{\bullet }$ If $t_{k}^{p}$,\ $k\in \mathbb{Z}$ satisfies $\mathrm{%
\eqref{Asum2}}$ with $\sigma _{2}\geq p$, then 
\begin{equation*}
\big\|\mathcal{M(}f_{j})|L_{p}(\mathbb{R}^{n},t_{k})\big\|\leq c\text{ }%
2^{\alpha _{2}(k-j)}\big\|f_{j}|L_{p}(\mathbb{R}^{n},t_{j})\big\|
\end{equation*}%
holds for all sequences of functions $f_{j}\in L_{p}(\mathbb{R}^{n},t_{j})$, 
$j\in \mathbb{Z}$ and $k\geq j$, where $c>0$ is independent of $k$ and $j$,
see \cite{D20}. $\newline
\mathrm{(iii)}$ A proof of Lemma \ref{key-estimate1.2} for $t_{k}^{p}=\omega 
$, $k\in \mathbb{Z}$ may be found in \cite{Mu72}.$\newline
\mathrm{(iv)}$\ In Lemma \ref{key-estimate1.2}\ we can assume that $%
t_{k}^{p}\in A_{p}(\mathbb{R}^{n})$,\ $k\in \mathbb{Z}$, $1<p<\infty $ with 
\begin{equation*}
A_{p}(t_{k}^{p})\leq c,\quad k\in \mathbb{Z},
\end{equation*}%
which follow by Lemma \ref{Ap-Property}/(iii).
\end{remark}

Let recall the vector-valued maximal inequality of Fefferman and Stein \cite%
{FeSt71}.

\begin{theorem}
\label{FS-inequality}Let $0<p<\infty ,0<q\leq \infty $ and $0<\sigma <\min
(p,q)$. Then%
\begin{equation*}
\Big\|\Big(\sum\limits_{k=-\infty }^{\infty }\big(\mathcal{M}_{\sigma
}(f_{k})\big)^{q}\Big)^{\frac{1}{q}}|L_{p}(\mathbb{R}^{n})\Big\|\lesssim %
\Big\|\Big(\sum\limits_{k=-\infty }^{\infty }\left\vert f_{k}\right\vert ^{q}%
\Big)^{\frac{1}{q}}|L_{p}(\mathbb{R}^{n})\Big\|
\end{equation*}%
holds for all sequence of functions $\{f_{k}\}\in L_{p}(\ell _{q})$.
\end{theorem}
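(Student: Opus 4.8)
The statement is the classical Fefferman--Stein vector-valued maximal inequality, so the plan is to normalise the exponent $\sigma$ away and then run the standard duality argument, feeding in the weighted maximal estimates already available in the paper. Set $g_{k}:=|f_{k}|^{\sigma}$, $P:=p/\sigma$ and $Q:=q/\sigma$; since $0<\sigma<\min(p,q)$ we have $1<P<\infty$ and $1<Q\le\infty$. From $\mathcal{M}_{\sigma}(f_{k})=(\mathcal{M}(g_{k}))^{1/\sigma}$ one gets $(\mathcal{M}_{\sigma}(f_{k}))^{q}=(\mathcal{M}(g_{k}))^{Q}$ and $|f_{k}|^{q}=g_{k}^{Q}$, so raising the asserted inequality to the power $\sigma$ reduces everything to the case $\sigma=1$:
\begin{equation*}
\Big\|\Big(\sum_{k}(\mathcal{M}(g_{k}))^{Q}\Big)^{1/Q}|L_{P}(\Rn)\Big\|\lesssim\Big\|\Big(\sum_{k}g_{k}^{Q}\Big)^{1/Q}|L_{P}(\Rn)\Big\|,\qquad 1<P<\infty,\ 1<Q\le\infty.
\end{equation*}
When $Q=\infty$ this is immediate: $\sup_{k}\mathcal{M}(g_{k})\le\mathcal{M}(\sup_{k}|g_{k}|)$ pointwise, and $\mathcal{M}$ is bounded on $L_{P}(\Rn)$ for $P>1$ by the single-weight (constant weight) case of Lemma \ref{key-estimate1.2}. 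Hence I may assume $1<Q<\infty$.

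First I treat $1<Q\le P<\infty$. By $L_{P/Q}$-duality there is $0\le\varphi$ with $\big\|\varphi|L_{(P/Q)'}(\Rn)\big\|=1$ realising $\big\|\sum_{k}(\mathcal{M}(g_{k}))^{Q}|L_{P/Q}(\Rn)\big\|=\int_{\Rn}\sum_{k}(\mathcal{M}(g_{k}))^{Q}\varphi\,dx$. I apply the Rubio de Francia iteration, setting $\Phi:=\sum_{j\ge0}(2\|\mathcal{M}\|)^{-j}\mathcal{M}^{j}\varphi$ with $\|\mathcal{M}\|$ the operator norm of $\mathcal{M}$ on $L_{(P/Q)'}(\Rn)$ (finite since $(P/Q)'>1$). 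Then $\varphi\le\Phi$, $\big\|\Phi|L_{(P/Q)'}(\Rn)\big\|\le2$, and $\mathcal{M}\Phi\le2\|\mathcal{M}\|\,\Phi$, i.e.\ $\Phi\in A_{1}(\Rn)$ with a constant independent of $\varphi$. Since $A_{1}(\Rn)\subset A_{Q}(\Rn)$ by Lemma \ref{Ap-Property}/(i), with $A_{Q}$-constant controlled by the $A_{1}$-constant, the single-weight form of Lemma \ref{key-estimate1.2} (applied with $t=\Phi^{1/Q}$) gives $\int(\mathcal{M}(g_{k}))^{Q}\Phi\,dx\le c\int g_{k}^{Q}\Phi\,dx$ with $c$ independent of $k$. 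Summing in $k$, using $\varphi\le\Phi$ and Hölder's inequality,
\begin{equation*}
\int_{\Rn}\sum_{k}(\mathcal{M}(g_{k}))^{Q}\varphi\,dx\le c\int_{\Rn}\Big(\sum_{k}g_{k}^{Q}\Big)\Phi\,dx\le c\,\Big\|\sum_{k}g_{k}^{Q}|L_{P/Q}(\Rn)\Big\|\,\big\|\Phi|L_{(P/Q)'}(\Rn)\big\|,
\end{equation*}
which is the claim after taking $Q$-th roots.

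Next I bootstrap to $1<P<Q<\infty$ by duality at the vector level, using the scalar Fefferman--Stein inequality $\int(\mathcal{M}f)\,h\,dx\lesssim\int|f|\,\mathcal{M}h\,dx$ for $0\le h$ (see \cite{FeSt71}). Since $L_{P}(\ell_{Q})^{\ast}=L_{P'}(\ell_{Q'})$, I choose $\{b_{k}\}\ge0$ with $\big\|(\sum_{k}b_{k}^{Q'})^{1/Q'}|L_{P'}(\Rn)\big\|\le1$ testing the left-hand norm. Applying the scalar inequality term by term, then Hölder in $\ell$ and in $L_{P}$,
\begin{equation*}
\int_{\Rn}\sum_{k}\mathcal{M}(g_{k})\,b_{k}\,dx\lesssim\int_{\Rn}\Big(\sum_{k}g_{k}^{Q}\Big)^{1/Q}\Big(\sum_{k}(\mathcal{M}(b_{k}))^{Q'}\Big)^{1/Q'}dx\lesssim\Big\|\Big(\sum_{k}g_{k}^{Q}\Big)^{1/Q}|L_{P}(\Rn)\Big\|\,\Big\|\Big(\sum_{k}(\mathcal{M}(b_{k}))^{Q'}\Big)^{1/Q'}|L_{P'}(\Rn)\Big\|.
\end{equation*}
Because $P<Q$ forces $1<Q'<P'<\infty$, the last factor is controlled, by the case already established applied to the pair $(P',Q')$, by $\big\|(\sum_{k}b_{k}^{Q'})^{1/Q'}|L_{P'}(\Rn)\big\|\le1$. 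Taking the supremum over admissible $\{b_{k}\}$ closes the argument.

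The main obstacle is the first case: everything hinges on producing, for each dual test function $\varphi$, an $A_{1}$-majorant $\Phi$ whose constant is \emph{uniform} (independent of $\varphi$ and of $k$), so that Lemma \ref{key-estimate1.2} can be invoked with one and the same constant across the whole sum over $k$; this uniformity is exactly what the Rubio de Francia iteration together with the embedding $A_{1}\subset A_{Q}$ supply. The endpoint scalar estimate $\int(\mathcal{M}f)h\lesssim\int|f|\,\mathcal{M}h$ used in the bootstrap is the other delicate ingredient, and is proved separately by a Vitali/Calder\'on--Zygmund covering argument.
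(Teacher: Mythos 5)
Your reduction to $\sigma=1$, your treatment of $Q=\infty$, and your handling of the regime $1<Q\le P<\infty$ are sound: the Rubio de Francia majorant $\Phi$ has an $A_{1}$ constant independent of $\varphi$, so the single-weight case of Lemma \ref{key-estimate1.2} (with the constant sequence $t_{k}=\Phi ^{1/Q}$) applies with a constant uniform in $k$, and the duality/H\"{o}lder chain closes. Note that the paper itself gives no argument for this theorem; it simply cites \cite{FeSt71}, whose proof handles the remaining exponent range by a vector-valued Calder\'{o}n--Zygmund decomposition rather than by duality.

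The bootstrap to $1<P<Q<\infty$, however, contains a genuine error: the scalar inequality you invoke,
\begin{equation*}
\int_{\mathbb{R}^{n}}\mathcal{M}(f)(x)\,h(x)\,dx\lesssim \int_{\mathbb{R}
^{n}}|f(x)|\,\mathcal{M}(h)(x)\,dx,
\end{equation*}
is false. What Fefferman and Stein prove is the weak-type estimate $\int_{\{\mathcal{M}f>\lambda \}}h\,dx\lesssim \lambda ^{-1}\int |f|\,\mathcal{M}h\,dx$ and, by interpolation, the strong-type inequality $\int (\mathcal{M}f)^{r}h\,dx\lesssim \int |f|^{r}\,\mathcal{M}h\,dx$ only for $r>1$; at $r=1$ it fails. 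Concretely, take $f=\chi _{\lbrack 0,1]^{n}}$ and $h=\chi _{B(0,R)}$: then $\mathcal{M}h\leq 1$, so the right-hand side is at most $\int |f|=1$, while the left-hand side is $\int_{B(0,R)}\mathcal{M}f\,dx\gtrsim \log R\rightarrow \infty $ as $R\rightarrow \infty $. Placing this $h$ (suitably normalized) in a single slot $k=k_{0}$ of an admissible sequence $\{b_{k}\}$ shows that the first ``$\lesssim $'' in your displayed chain for the case $P<Q$ cannot hold with any uniform constant, so the argument does not close. This regime genuinely requires a different idea: in \cite{FeSt71} (see also \cite{L. Graf14}) the case $p<q$ is obtained from the weak estimate $L_{1}(\ell _{q})\rightarrow L_{1,\infty }(\ell _{q})$, proved via a Calder\'{o}n--Zygmund decomposition of $(\sum_{k}|f_{k}|^{q})^{1/q}$, followed by Marcinkiewicz interpolation against the trivial diagonal case $p=q$; duality is reserved for $p>q$ precisely because only there is $(p/q)^{\prime }$ finite, and only for exponents $r>1$ does the weighted scalar inequality hold.
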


We state one of the main tools of this paper.

\begin{lemma}
\label{key-estimate1}Let $1<\theta \leq p<\infty $\ and $1<q\leq \infty $.
Let $\{t_{k}\}$\ be a $p$-admissible\ weight\ sequence\ such that $%
t_{k}^{p}\in A_{\frac{p}{\theta }}(\mathbb{R}^{n})$, $k\in \mathbb{Z}$.
Assume that $t_{k}^{p}$,\ $k\in \mathbb{Z}$ have the same Muckenhoupt
constant, $A_{\frac{p}{\theta }}(t_{k})=c,k\in \mathbb{Z}$. Then 
\begin{equation}
\Big\|\Big(\sum\limits_{k=-\infty }^{\infty }t_{k}^{q}\big(\mathcal{M}(f_{k})%
\big)^{q}\Big)^{\frac{1}{q}}|L_{p}(\mathbb{R}^{n})\Big\|\lesssim \Big\|\Big(%
\sum\limits_{k=-\infty }^{\infty }t_{k}^{q}\left\vert f_{k}\right\vert ^{q}%
\Big)^{\frac{1}{q}}|L_{p}(\mathbb{R}^{n})\Big\|  \label{key-est}
\end{equation}%
holds for all sequences of functions $\{t_{k}f_{k}\}\in L_{p}(\ell _{q})$.
\end{lemma}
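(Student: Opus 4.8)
The plan is to read \eqref{key-est} as the boundedness of the Hardy--Littlewood maximal operator $\mathcal{M}$ on the mixed weighted space $X:=L_{p}(\ell _{q};\{t_{k}\})$ whose quasi-norm is the right-hand side of \eqref{key-est}. The case $q=p$ is immediate: by Tonelli and the scalar estimate of Lemma~\ref{key-estimate1.2}, applied for each $k$ with a constant independent of $k$,
\[
\sum_{k}\int_{\mathbb{R}^{n}}t_{k}^{p}(\mathcal{M}f_{k})^{p}\,dx\lesssim \sum_{k}\int_{\mathbb{R}^{n}}t_{k}^{p}|f_{k}|^{p}\,dx .
\]
The whole difficulty is therefore the mismatch $q\neq p$, i.e. the interplay between the $\ell _{q}$-summation and the $x$-dependent weights $t_{k}$. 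Note that, since $1<\theta \leq p$ and $1<q$, one may choose $1<\sigma <\min (p,q)$, so that $\mathcal{M}f_{k}\leq \mathcal{M}_{\sigma }f_{k}$ and Theorem~\ref{FS-inequality} already yields the \emph{unweighted} version of \eqref{key-est}; the real task is to carry the weights through the argument.

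To pass from the endpoint $q=p$ to a general $q$ I would interpolate. Writing the inner space as a weighted sequence space $\ell _{q}(\{t_{k}\})$ and using complex interpolation of vector-valued Lebesgue spaces with the \emph{same} outer exponent $p$, one has $[L_{p}(\ell _{q_{0}}(\{t_{k}\})),L_{p}(\ell _{q_{1}}(\{t_{k}\}))]_{\vartheta }=L_{p}(\ell _{q}(\{t_{k}\}))$ with $1/q=(1-\vartheta )/q_{0}+\vartheta /q_{1}$; crucially, because both endpoints carry the weight $t_{k}$ to the first power, so does the interpolated space (the exponent is preserved under $(1-\vartheta )+\vartheta =1$). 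After the routine linearisation of the sublinear operator $\mathcal{M}$, it then suffices to establish \eqref{key-est} at the extreme value $q=\infty$ and at one value $q_{0}\in (1,p)$; interpolating these against $q=p$ covers the full range $1<q\leq \infty $.

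The main obstacle is the endpoint $q=\infty $ with varying weights, namely $\big\|\sup _{k}t_{k}\,\mathcal{M}f_{k}\,|\,L_{p}(\mathbb{R}^{n})\big\|\lesssim \big\|\sup _{k}t_{k}|f_{k}|\,|\,L_{p}(\mathbb{R}^{n})\big\|$. Setting $h:=\sup _{k}t_{k}|f_{k}|$ and using $|f_{k}|\leq h/t_{k}$ together with the monotonicity of $\mathcal{M}$, this reduces to controlling $\sup _{k}R_{k}h$ in $L_{p}$, where $R_{k}h:=t_{k}\,\mathcal{M}(h/t_{k})$. Here one \emph{cannot} argue pointwise: for a general weight with $t_{k}^{p}\in A_{\frac{p}{\theta }}(\mathbb{R}^{n})$ the factor $t_{k}(x)$ sitting outside the average cannot be absorbed into $\mathcal{M}_{\sigma }(t_{k}f_{k})(x)$, since that would force an $A_{1}$-type bound of the form $t_{k}^{-\sigma ^{\prime }}\in A_{1}(\mathbb{R}^{n})$, which membership in $A_{\frac{p}{\theta }}(\mathbb{R}^{n})$ does not provide. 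The estimate must instead be obtained in integrated form: each $R_{k}$ is bounded on $L_{p}(\mathbb{R}^{n})$ uniformly in $k$ by Lemma~\ref{key-estimate1.2}, and the uniform membership $t_{k}^{p}\in A_{\frac{p}{\theta }}(\mathbb{R}^{n})$, combined with the reverse H\"{o}lder inequality (Theorem~\ref{reverse Holder inequality}) and the openness of the Muckenhoupt classes (Lemma~\ref{Ap-Property}(iii)), upgrades this to boundedness of $R_{k}$ on $L_{p}(\mathbb{R}^{n},\omega )$ for a fixed range of weights $\omega $ with constants \emph{uniform} in $k$; the self-improving exponent $\varepsilon _{\gamma }>0$ of Theorem~\ref{reverse Holder inequality} is common to all $k$ precisely because the $A_{\frac{p}{\theta }}(\mathbb{R}^{n})$ constant is.

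I expect the hardest step to be exactly this control of $\sup _{k}R_{k}h$ in $L_{p}$: passing from the uniform-in-$k$ scalar (and weighted) bounds for the individual $R_{k}$ to a bound for their supremum is the one place where the interchange of $\sup _{k}$ with the $L_{p}$-integration is genuinely delicate, and it is where the extra room furnished by $\theta >1$ (equivalently, the self-improvement of the uniform $A_{\frac{p}{\theta }}(\mathbb{R}^{n})$ condition) is indispensable. The remaining endpoint $q_{0}\in (1,p)$ can be treated by the same integrated scheme, using the associate space of $L_{p}(\ell _{q_{0}}(\{t_{k}\}))$, namely $L_{p^{\prime }}(\ell _{q_{0}^{\prime }}(\{t_{k}^{-1}\}))$, to set up the analogous estimate. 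Once both extreme cases are secured, the interpolation of the second paragraph delivers \eqref{key-est} for all $1<q\leq \infty $. An alternative to the interpolation step is a direct appeal to vector-valued Rubio de Francia extrapolation from the uniform weighted scalar bounds, but the varying weights $t_{k}$ obstruct a single-weight application, so the route above seems more transparent.
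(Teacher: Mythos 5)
Your proposal correctly isolates where the difficulty lies, but it does not resolve it, and that unresolved step is the entire content of the paper's proof. The paper quotes the range $1<q<\infty$ from \cite{D20.1} and only has to prove the endpoint $q=\infty$; your plan likewise funnels everything into that endpoint (your interpolation scheme for $q\in(p,\infty)$ presupposes it, and, incidentally, a single $q_{0}\in(1,p)$ would not cover $q\in(1,q_{0})$ anyway). For $q=\infty$ you reduce to showing $\big\|\sup_{k}R_{k}h\,|\,L_{p}(\mathbb{R}^{n})\big\|\lesssim\big\|h\,|\,L_{p}(\mathbb{R}^{n})\big\|$ with $R_{k}h=t_{k}\mathcal{M}(h/t_{k})$, and then you explicitly leave this open (``I expect the hardest step to be exactly this control''). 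Uniform-in-$k$ boundedness of each $R_{k}$ on $L_{p}(\mathbb{R}^{n})$, or even on a family of weighted spaces, does not by itself bound the supremum: a sup of infinitely many uniformly bounded operators is in general unbounded, and no mechanism for interchanging $\sup_{k}$ with the integration is given. Moreover, your intermediate claim that uniform $A_{\frac{p}{\theta}}(\mathbb{R}^{n})$ membership plus Theorem~\ref{reverse Holder inequality} makes $R_{k}$ uniformly bounded on $L_{p}(\mathbb{R}^{n},\omega)$ for ``a fixed range of weights $\omega$'' is unsupported: since $\|R_{k}h\,|\,L_{p}(\mathbb{R}^{n},\omega)\|=\|\mathcal{M}(h/t_{k})\,|\,L_{p}(\mathbb{R}^{n},t_{k}\omega)\|$, it would require $(t_{k}\omega)^{p}\in A_{p}(\mathbb{R}^{n})$ uniformly, and products of Muckenhoupt weights need not be Muckenhoupt; in any case, even granting it, the passage to $\sup_{k}$ is still missing, and your closing remark rules out the extrapolation shortcut you might have used there.

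What the paper does at exactly this point, and what is absent from your proposal, is the following. Working with $g_{k}=t_{k}f_{k}$, it first proves the a.e.\ ``sandwich'' inequality $t_{k}\mathcal{M}(t_{k}^{-1}g_{k})\lesssim\mathcal{M}\big(t_{k}\mathcal{M}(t_{k}^{-1}g_{k})\big)$ with constant independent of $k$ (Step 1: testing against compact sets, H\"older's inequality and the uniform weighted bound of Lemma~\ref{key-estimate1.2} show that $u_{k}:=t_{k}\mathcal{M}(t_{k}^{-1}g_{k})$ is locally integrable uniformly in $k$, so Lebesgue differentiation applies). This replaces the intractable $\sup_{k}R_{k}g_{k}$ by $\sup_{k}\mathcal{M}(u_{k})$, whose super-level sets $E_{\delta}$ are open; inner regularity, Chebyshev, and the uniform strong $L_{p_{3}}$ bounds then give a weak-type estimate for the sublinear operator $T\{g_{k}\}=\{\mathcal{M}(u_{k})\}$ on $\ell_{\infty}$-valued spaces at an exponent $p_{3}<p$ furnished by the openness of the Muckenhoupt classes (Lemma~\ref{Ap-Property}(iii)), and at a second exponent $p(1+\varepsilon)>p$ furnished by Theorem~\ref{reverse Holder inequality}, the point being that $\varepsilon$ is the \emph{same} for all $k$ because the $A_{\frac{p}{\theta}}(\mathbb{R}^{n})$ constants are equal. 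The vector-valued Marcinkiewicz interpolation theorem (\cite[Lemma 1]{BCP}, \cite[Lemma 2.5]{GLD}) then yields the strong $L_{p}(\ell_{\infty})\to L_{p}(\ell_{\infty})$ bound, and setting $g_{k}=t_{k}f_{k}$ gives \eqref{key-est}. So your reduction and your inventory of tools (uniform constants, reverse H\"older, openness, interpolation) match the paper's, but the two decisive steps --- the pointwise self-majorization that makes the supremum tractable, and the pair of weak-type estimates straddling $p$ that feed the Marcinkiewicz interpolation --- are precisely what your proposal postpones; as it stands it is a plan, not a proof.
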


\begin{proof}
Since \eqref{key-est} when $1<q<\infty $ is given in \cite{D20.1}, we only
need consider the case $q=\infty $. For the sequence of functions $\{g_{k}\}$
we define%
\begin{equation}
T(\{g_{k}\})=\{\mathcal{M}(t_{k}\mathcal{M}(t_{k}^{-1}g_{k}))\}.
\label{operator1}
\end{equation}%
$T$ is a sublinear operator.

\textit{Step 1.} Let $K\subset \mathbb{R}^{n}$ be a compact set, $%
1<p_{2}<\infty ,\{g_{k}\}\subset L_{p_{2}}(\ell _{\infty })$\ and\ $%
t_{k}^{p_{2}}\in A_{p_{2}}(\mathbb{R}^{n})$, $k\in \mathbb{Z}$. By H\"{o}%
lder's inequality and Lemma \ref{key-estimate1.2}, we see that%
\begin{align*}
\int_{K}t_{k}(x)\mathcal{M}(t_{k}^{-1}g_{k})(x)dx\leq & \big\|\chi
_{K}|L_{p_{2}^{\prime }}(\mathbb{R}^{n})\big\|\big\|t_{k}\mathcal{M}%
(t_{k}^{-1}g_{k})|L_{p_{2}}(\mathbb{R}^{n})\big\| \\
\lesssim & \big\|g_{k}|L_{p_{2}}(\mathbb{R}^{n})\big\| \\
\lesssim & \big\|\sup_{k\in \mathbb{Z}}(|g_{k}|)|L_{p_{2}}(\mathbb{R}^{n})%
\big\| \\
\lesssim & 1,
\end{align*}%
where the implicit constant is independent of $k$. This yields $t_{k}%
\mathcal{M}(t_{k}^{-1}g_{k})\lesssim \mathcal{M}(t_{k}\mathcal{M}%
(t_{k}^{-1}g_{k}))$ almost everywhere with the implicit constant is
independent of $k$.

\textit{Step 2. }We will prove that $T$ maps $L_{p_{2}}(\ell _{\infty })$ to 
$L_{p_{2},\infty }(\ell _{\infty }),1<p_{2}<\infty $, $t_{k}^{p_{2}}\in
A_{p_{2}}(\mathbb{R}^{n})$, $k\in \mathbb{Z}$ have the same Muckenhoupt
constant, $A_{p_{2}}(t_{k}^{p_{2}})=c,k\in \mathbb{Z}$. By Lemma \ref%
{Ap-Property}/(ii)-(iii) there exists a $1<p_{3}<p_{2}<\infty $ such that $%
t_{k}^{p_{2}}\in A_{p_{3}}(\mathbb{R}^{n})$ and $t_{k}^{p_{3}}\in A_{p_{3}}(%
\mathbb{R}^{n})$. First we prove that $T$ maps $L_{p_{3}}(\ell _{\infty })$
to $L_{p_{3},\infty }(\ell _{\infty })$. Let $\{g_{k}\}\subset
L_{p_{3}}(\ell _{\infty })$ and $\delta >0$. We set 
\begin{equation*}
E_{\delta }=\{x\in \mathbb{R}^{n}:\sup_{k\in \mathbb{Z}}\mathcal{M}(t_{k}%
\mathcal{M}(t_{k}^{-1}g_{k}))(x)>\delta \}.
\end{equation*}%
$E_{\delta }$ is open set and by Step 1%
\begin{equation*}
\{x\in \mathbb{R}^{n}:\sup_{k\in \mathbb{Z}}t_{k}(x)\mathcal{M}%
(t_{k}^{-1}g_{k})(x)>c\delta \}\subset E_{\delta }
\end{equation*}%
for some positive constant $c$ independent of $\delta $ and $x$. Indeed, for 
$x\in E_{\delta }$ there is an $k_{0}\in \mathbb{Z}$ such that $\mathcal{M}%
(t_{k_{0}}\mathcal{M}(t_{k_{0}}^{-1}g_{k_{0}}))(x)>\delta $. Then there is
an open cube $Q^{x,k_{0}}$ that contains $x$ such that $%
M_{Q^{x,k_{0}}}(t_{k_{0}}\mathcal{M}(f_{k_{0}}))>\delta $. We see that%
\begin{equation*}
\sup_{k\in \mathbb{Z}}\mathcal{M}(t_{k}\mathcal{M}(t_{k}^{-1}g_{k}))(y)\geq
M_{Q^{x,k_{0}}}(t_{k_{0}}\mathcal{M}(t_{k_{0}}^{-1}g_{k_{0}}))>\delta ,\quad
y\in Q^{x,k_{0}},
\end{equation*}%
which yields that $Q^{x,k_{0}}\subset E_{\delta }$ and $E_{\delta }$ is open
set. Observe that%
\begin{equation*}
E_{\delta }\subset \bigcup_{k=-\infty }^{\infty }\Big\{x\in \mathbb{R}^{n}:%
\mathcal{M}(t_{k}\mathcal{M}(t_{k}^{-1}g_{k}))(x)>\delta \Big\}.
\end{equation*}%
Let $K_{\delta }\subset E_{\delta }$ be a compact set. We have%
\begin{equation*}
K_{\delta }\subset \bigcup_{k=0}^{N}\Big\{x\in \mathbb{R}^{n}:\mathcal{M}%
(t_{k}\mathcal{M}(t_{k}^{-1}g_{k}))(x)>\delta \Big\},\quad N\in \mathbb{N}.
\end{equation*}%
We have%
\begin{align*}
|K_{\delta }|\leq & \sum_{k=0}^{N}|\{x\in \mathbb{R}^{n}:\mathcal{M}(t_{k}%
\mathcal{M}(t_{k}^{-1}g_{k}))(x)>\delta \}| \\
=& \delta ^{-p_{3}}\sum_{k=0}^{N}\int_{\{x\in \mathbb{R}^{n}:\mathcal{M}%
(t_{k}\mathcal{M}(t_{k}^{-1}g_{k}))(x)>\delta \}}\delta ^{p_{3}}dx \\
<& \delta ^{-p_{3}}\sum_{k=0}^{N}\int_{\mathbb{R}^{n}}(\mathcal{M}(t_{k}%
\mathcal{M}(t_{k}^{-1}g_{k}))(x))^{p_{3}}dx.
\end{align*}%
By Lemma \ref{key-estimate1.1}, we see that%
\begin{align*}
\int_{\mathbb{R}^{n}}(\mathcal{M}(t_{k}\mathcal{M}%
(t_{k}^{-1}g_{k}))(x))^{p_{3}}dx=& \big\|\mathcal{M}(t_{k}\mathcal{M}%
(t_{k}^{-1}g_{k}))|L_{p_{3}}(\mathbb{R}^{n})\big\|^{p_{3}} \\
\lesssim & \big\|t_{k}\mathcal{M}(t_{k}^{-1}g_{k})|L_{p_{3}}(\mathbb{R}^{n})%
\big\|^{p_{3}} \\
\lesssim & \big\|g_{k}|L_{p_{3}}(\mathbb{R}^{n})\big\|^{p_{3}} \\
\lesssim & \big\|\sup_{k\in \mathbb{Z}}(|g_{k}|)|L_{p_{3}}(\mathbb{R}^{n})%
\big\|^{p_{3}}.
\end{align*}%
Then%
\begin{equation*}
\delta |K_{\delta }|^{\frac{1}{p_{3}}}\lesssim \big\|\sup_{k\in \mathbb{Z}%
}(|g_{k}|)|L_{p_{3}}(\mathbb{R}^{n})\big\|
\end{equation*}%
for any $\delta >0$.\ Taking the supremum over all compact $K_{\delta
}\subset $ $E_{\delta }$ and using the inner regularity of Lebesgue measure,
we obtain%
\begin{equation}
\sup_{\delta >0}\delta |E_{\delta }|^{\frac{1}{p_{3}}}\lesssim \big\|%
\sup_{k\in \mathbb{Z}}(|g_{k}|)|L_{p_{3}}(\mathbb{R}^{n})\big\|.
\label{estimate1}
\end{equation}%
Thus $T$ maps $L_{p_{3}}(\ell _{\infty })$ to $L_{p_{3},\infty }(\ell
_{\infty })$. Using again $t_{k}^{p_{2}}\in A_{p_{2}}(\mathbb{R}^{n})$, $%
k\in \mathbb{Z}$, by Theorem \ref{reverse Holder inequality} there exists an 
$\varepsilon >0$, independent on $k$, such that $t_{k}^{p_{2}(1+\varepsilon
)}\in A_{p_{2}}(\mathbb{R}^{n})\subset A_{p_{2}(1+\varepsilon )}(\mathbb{R}%
^{n}),k\in \mathbb{Z}$. Similarly to the estimate \eqref{estimate1}, we
obtain%
\begin{equation*}
\sup_{\delta >0}\delta |E_{\delta }|^{\frac{1}{p_{2}(1+\varepsilon )}%
}\lesssim \big\|\sup_{k\in \mathbb{Z}}(|g_{k}|)|L_{p_{2}(1+\varepsilon )}(%
\mathbb{R}^{n})\big\|,
\end{equation*}%
whenever $\{g_{k}\}\subset L_{p_{2}(1+\varepsilon )}(\ell _{\infty })$.
Hence $T$ maps $L_{p_{2}(1+\varepsilon )}(\ell _{\infty })$ to $%
L_{p_{2}(1+\varepsilon ),\infty }(\ell _{\infty })$. By the vector-valued
version of the Marcinkiewicz interpolation; see \cite[Lemma 1]{BCP} and \cite%
[Lemma 2.5]{GLD} we obtain the boundedness of $T$ from $L_{p_{2}}(\ell
_{\infty })$ to $L_{p_{2}}(\ell _{\infty })$ and 
\begin{align*}
\big\|\sup_{k\in \mathbb{Z}}(t_{k}\mathcal{M}(t_{k}^{-1}g_{k}))|L_{p_{2}}(%
\mathbb{R}^{n})\big\|\lesssim & \big\|\sup_{k\in \mathbb{Z}}(\mathcal{M}%
(t_{k}\mathcal{M}(t_{k}^{-1}g_{k})))|L_{p_{2}}(\mathbb{R}^{n})\big\| \\
\lesssim & \big\|\sup_{k\in \mathbb{Z}}(|g_{k}|)|L_{p_{2}}(\mathbb{R}^{n})%
\big\|,
\end{align*}%
whenever $\{g_{k}\}\subset L_{p_{2}}(\ell _{\infty })$, where the first
inequality follows by Step 1.

\textit{Step 3. }We will prove \eqref{key-est}. Let $T$ be a sublinear
operator defined in \eqref{operator1}. Let $\varepsilon _{t_{k}^{p}}>0,k\in 
\mathbb{Z}$ be as in Theorem \ref{reverse Holder inequality}. Since $%
t_{k}^{p}$,\ $k\in \mathbb{Z}$ have the same Muckenhoupt constant, $A_{\frac{%
p}{\theta }}(t_{k}^{p})=c,k\in \mathbb{Z}$, we have $\varepsilon
_{t_{k}^{p}}=\varepsilon ,k\in \mathbb{Z}$. Again, by Lemma \ref{Ap-Property}%
/(ii)-(iii) there exists a $1<p_{1}<p<\infty $ such that $t_{k}^{p}\in
A_{p_{1}}(\mathbb{R}^{n})$ and $t_{k}^{p_{1}}\in A_{p_{1}}(\mathbb{R}^{n})$.
By Theorem \ref{reverse Holder inequality} there exists an $\varepsilon >0$,
independent on $k$, such that $t_{k}^{p(1+\varepsilon )}\in A_{p}(\mathbb{R}%
^{n})\subset A_{p(1+\varepsilon )}(\mathbb{R}^{n}),k\in \mathbb{Z}$. Step 2
yields the boundedness of $T$ from $L_{p}(\ell _{\infty })$ to $L_{p}(\ell
_{\infty })$ and 
\begin{align}
\big\|\sup_{k\in \mathbb{Z}}(t_{k}\mathcal{M}(t_{k}^{-1}g_{k}))|L_{p}(%
\mathbb{R}^{n})\big\|\lesssim & \big\|\sup_{k\in \mathbb{Z}}(\mathcal{M}%
(t_{k}\mathcal{M}(t_{k}^{-1}g_{k})))|L_{p}(\mathbb{R}^{n})\big\|
\label{main-est3} \\
\lesssim & \big\|\sup_{k\in \mathbb{Z}}(|g_{k}|)|L_{p}(\mathbb{R}^{n})\big\|,
\notag
\end{align}%
whenever $\{g_{k}\}\subset L_{p}(\ell _{\infty })$, where the first
inequality follows by Step 1. In \eqref{main-est3} setting $%
g_{k}=t_{k}f_{k},k\in \mathbb{Z}$\ we then have complete the proof of %
\eqref{key-est}. This completes the proof of Lemma \ref{key-estimate1}.
\end{proof}

The next lemmas are important for the study of our function spaces.

\begin{lemma}
\label{key-estimate1.1}Let $v\in \mathbb{Z}$, $K\geq 0,1<\theta \leq
p<\infty ,1<q\leq \infty \ $ and $\alpha =(\alpha _{1},\alpha _{2})\in 
\mathbb{R}^{2}$. Let $\{t_{k}\}\in \dot{X}_{\alpha ,\sigma ,p}$ be a $p$%
-admissible weight sequence with $\sigma =(\sigma _{1}=\theta \left( \frac{p%
}{\theta }\right) ^{\prime },\sigma _{2}\geq p)$. Then for all sequence of
functions $\{t_{k}f_{k}\}\in L_{p}(\ell _{q})$, 
\begin{align}
& \Big\|\Big(\sum\limits_{k=-\infty }^{\infty }t_{k}^{q}\Big(\sum_{j=-\infty
}^{k+v}2^{(j-k)K}\mathcal{M}(f_{j})\Big)^{q}\Big)^{\frac{1}{q}}|L_{p}(%
\mathbb{R}^{n})\Big\|  \notag \\
& \lesssim \Big\|\Big(\sum\limits_{k=-\infty }^{\infty }t_{k}^{q}\left\vert
f_{k}\right\vert ^{q}\Big)^{\frac{1}{q}}|L_{p}(\mathbb{R}^{n})\Big\|
\label{key-est1.1.1}
\end{align}%
if $K>\alpha _{2}$ and 
\begin{align}
& \Big\|\Big(\sum\limits_{k=-\infty }^{\infty }t_{k}^{q}\Big(%
\sum_{j=k+v}^{\infty }2^{(j-k)K}\mathcal{M}(f_{j})\Big)^{q}\Big)^{\frac{1}{q}%
}|L_{p}(\mathbb{R}^{n})\Big\|  \notag \\
& \lesssim \Big\|\Big(\sum\limits_{k=-\infty }^{\infty }t_{k}^{q}\left\vert
f_{k}\right\vert ^{q}\Big)^{\frac{1}{q}}|L_{p}(\mathbb{R}^{n})\Big\|
\label{key-est1.1.2.}
\end{align}%
if $K<\alpha _{1}$.
\end{lemma}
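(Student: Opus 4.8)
The plan is to reduce both inequalities to the already-established weighted vector-valued maximal inequality of Lemma \ref{key-estimate1}, together with the shifted scalar bounds recorded in Remark \ref{r-estimates copy(2)}(ii). Since the case $1<q<\infty$ is contained in \cite{D20.1}, I would only treat $q=\infty$. Note first that taking $k=j$ in \eqref{Asum1} with $\sigma _1=\theta(\frac{p}{\theta})'$ forces $t_k^p\in A_{\frac{p}{\theta}}(\mathbb{R}^n)$ with a Muckenhoupt constant uniform in $k$, so that Lemma \ref{key-estimate1} and Remark \ref{r-estimates copy(2)}(ii) do apply.

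For \eqref{key-est1.1.1} I would reorganize the inner sum by the substitution $i=k-j$, writing $t_k\sum_{j\leq k+v}2^{(j-k)K}\mathcal{M}(f_j)=\sum_{i\geq -v}2^{-iK}\,t_k\mathcal{M}(f_{k-i})$. Taking the supremum over $k$, using $\sup_k\sum_i(\cdot)\leq\sum_i\sup_k(\cdot)$ for nonnegative terms, and then the triangle inequality in $L_p(\mathbb{R}^n)$ (valid since $p>1$), I obtain after relabelling $m=k-i$
\begin{equation*}
\Big\|\sup_k t_k\sum_{j\leq k+v}2^{(j-k)K}\mathcal{M}(f_j)\,|L_p(\mathbb{R}^n)\Big\|\leq\sum_{i\geq -v}2^{-iK}\Big\|\sup_m t_{m+i}\mathcal{M}(f_m)\,|L_p(\mathbb{R}^n)\Big\|.
\end{equation*}
The same substitution applied to \eqref{key-est1.1.2.} produces an analogous sum running over $i\leq -v$.

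The key step is the shifted vector-valued ($\ell_\infty$) maximal inequality
\begin{equation*}
\Big\|\sup_m t_{m+i}\mathcal{M}(f_m)\,|L_p(\mathbb{R}^n)\Big\|\lesssim 2^{\alpha_2 i}\Big\|\sup_m t_m|f_m|\,|L_p(\mathbb{R}^n)\Big\|,\quad i\geq 0,
\end{equation*}
together with its companion in which $2^{\alpha_2 i}$ is replaced by $2^{\alpha_1 i}$ when $i\leq 0$. This is precisely Lemma \ref{key-estimate1} (the case $q=\infty$) but carrying a shift in the weight, so it is not literally covered by that lemma. I would prove it by repeating the three-step argument of Lemma \ref{key-estimate1} essentially verbatim, the only change being that the scalar input used there (Lemma \ref{key-estimate1.2}, uniform boundedness of $\mathcal{M}$ on $L_p(\mathbb{R}^n,t_k)$) is replaced by the shifted scalar estimates of Remark \ref{r-estimates copy(2)}(ii): namely $\|\mathcal{M}(f_m)|L_p(\mathbb{R}^n,t_{m+i})\|\lesssim 2^{\alpha_2 i}\|f_m|L_p(\mathbb{R}^n,t_m)\|$ (from \eqref{Asum2}, using $\sigma_2\geq p$) for $i\geq 0$, and the $\alpha_1$-analogue (from \eqref{Asum1}) for $i\leq 0$. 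The local-integrability domination (Step 1), the weak-type bounds obtained through the reverse H\"older inequality of Theorem \ref{reverse Holder inequality} (Step 2), and the vector-valued Marcinkiewicz interpolation (Step 3) all go through unchanged, transporting the factor $2^{\alpha_2 i}$ (resp.\ $2^{\alpha_1 i}$) to the conclusion. I expect this transfer of the whole three-step scheme to the shifted setting to be the main technical obstacle.

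Finally I would sum the geometric series. For \eqref{key-est1.1.1} the tail $i\to+\infty$ contributes $\sum_{i\geq 0}2^{i(\alpha_2-K)}$, convergent precisely when $K>\alpha_2$, while the finitely many indices $-v\leq i<0$ (present only if $v\geq 1$) are controlled by the $\alpha_1$-bound and give a harmless finite constant. For \eqref{key-est1.1.2.} the sum runs over $i\leq -v$; its tail $i\to-\infty$ is governed by $\sum 2^{i(\alpha_1-K)}$, convergent precisely when $K<\alpha_1$, while the finitely many nonnegative indices (present only if $v\leq 0$) again contribute a bounded factor. In both cases the right-hand side is $\lesssim\|\sup_m t_m|f_m|\,|L_p(\mathbb{R}^n)\|$, which is exactly the asserted bound for $q=\infty$ and completes the argument.
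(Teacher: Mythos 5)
Your proposal is correct in substance, but it organizes the argument genuinely differently from the paper, and the difference is worth spelling out. The paper never decomposes into shifts: it packages the whole sum into a single sublinear operator
\begin{equation*}
T(\{g_{k}\})=\Big\{\mathcal{M}\Big(t_{k}\sum_{j=-\infty }^{k+v}2^{(j-k)K}\mathcal{M}(t_{j}^{-1}g_{j})\Big)\Big\},\qquad g_{j}=t_{j}f_{j},
\end{equation*}
establishes the pointwise domination (your Step 1 analogue) and weak-type bounds at two exponents $p_{1}$ and $p(1+\varepsilon )$ for this one operator, and applies the vector-valued Marcinkiewicz interpolation exactly once; the geometric series requiring $K>\alpha _{2}$ is summed \emph{inside} those weak-type estimates, using precisely the shifted scalar bounds of Remark \ref{r-estimates copy(2)}(ii) that you invoke. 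You instead pull the sum over shifts $i=k-j$ outside by the triangle inequality in $L_{p}$ at the very start, which reduces the lemma to a clean, reusable statement (a shifted $\ell _{\infty }$ maximal inequality with explicit constants $2^{\alpha _{2}i}$, resp.\ $2^{\alpha _{1}i}$) and makes the roles of $K>\alpha _{2}$ and $K<\alpha _{1}$ transparent in a single outer geometric series; the price is that you must re-run the three-step scheme for every $i$ and verify that the auxiliary exponent $p_{1}$, the reverse-H\"{o}lder exponent $\varepsilon $ of Theorem \ref{reverse Holder inequality}, and the interpolation constants are all uniform in $i$ --- this does hold, since the Muckenhoupt constants of $t_{k}^{p}$ obtained from \eqref{Asum1} with $j=k$ are uniform in $k$, but it is the one point your write-up should argue explicitly rather than describe the transfer as ``verbatim''. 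Finally, be aware that by deferring the key step to the proof of Lemma \ref{key-estimate1} you also inherit its weakness: the weak-type estimate there bounds a compact piece of the level set by the sum of the measures of finitely many sets $\{\mathcal{M}(\cdot )>\delta \}$, and as written that bound carries a factor of the number of such sets; your argument is therefore valid exactly to the extent that the paper's own argument for Lemma \ref{key-estimate1} is.
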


\begin{proof}
As the proof for \eqref{key-est1.1.2.} is similar, we only consider %
\eqref{key-est1.1.1} with $q=\infty $, since the case that $1<q<\infty $ is
given in \cite{D20.1}.

\textit{Step 1.} For the sequence of functions $\{g_{k}\}$ we define%
\begin{equation}
T(\{g_{k}\})=\Big\{\mathcal{M}\big(t_{k}\sum_{j=-\infty }^{k+v}2^{(j-k)K}%
\mathcal{M}(t_{j}^{-1}g_{j})\big)\Big\}.  \label{operator2}
\end{equation}%
$T$ is a sublinear operator. Let $K\subset \mathbb{R}^{n}$ be a compact set, 
$1<p_{2}<\infty $, $\{g_{k}\}\subset L_{p_{2}}(\ell _{\infty })$ and $%
t_{k}^{p_{2}}\in A_{p_{2}}(\mathbb{R}^{n})$, $k\in \mathbb{Z}$. By H\"{o}%
lder's inequality and Lemma \ref{key-estimate1.2} combined with Remark \ref%
{r-estimates copy(2)}, we see that%
\begin{align}
& \int_{K}t_{k}(x)\sum_{j=-\infty }^{k+v}2^{(j-k)K}\mathcal{M}%
((t_{j}^{-1}g_{j})(x)dx  \notag \\
\leq & \big\|\chi _{K}|L_{p_{2}^{\prime }}(\mathbb{R}^{n})\big\|\Big\|%
t_{k}\sum_{j=-\infty }^{k+v}2^{(j-k)K}\mathcal{M}(t_{j}^{-1}g_{j})|L_{p_{2}}(%
\mathbb{R}^{n})\Big\|  \notag \\
\lesssim & \sum_{j=-\infty }^{k+v}2^{(j-k)K}\big\|t_{k}\mathcal{M}%
(t_{j}^{-1}g_{j})|L_{p_{2}}(\mathbb{R}^{n})\big\|  \label{term2} \\
\lesssim & \sum_{j=-\infty }^{k+v}2^{(j-k)(K-\alpha _{2})}\big\|t_{j}%
\mathcal{M}(t_{j}^{-1}g_{j})|L_{p_{2}}(\mathbb{R}^{n})\big\|  \notag \\
\lesssim & \big\|\sup_{j\in \mathbb{Z}}(|g_{j}|)|L_{p_{2}}(\mathbb{R}^{n})%
\big\|  \notag \\
\lesssim & 1,  \notag
\end{align}%
if $v\leq 0$, where the implicit constant is independent of $k$. Assume that 
$v\in \mathbb{N}$. We split the right-hand side of \eqref{term2} into two
terms, i.e.,%
\begin{equation}
\sum_{j=-\infty }^{k}2^{(j-k)K}\big\|t_{k}\mathcal{M}%
(t_{j}^{-1}g_{j})|L_{p_{2}}(\mathbb{R}^{n})\big\|%
+\sum_{j=k+1}^{k+v}2^{(j-k)K}\big\|t_{k}\mathcal{M}%
(t_{j}^{-1}g_{j})|L_{p_{2}}(\mathbb{R}^{n})\big\|.  \label{term3}
\end{equation}%
Observe that we can estimate the first term by 
\begin{equation}
c\big\|\sup_{j\in \mathbb{Z}}(|g_{j}|)|L_{p_{2}}(\mathbb{R}^{n})\big\|.
\label{term4}
\end{equation}%
Again by Lemma \ref{key-estimate1.2} combined with Remark \ref{r-estimates
copy(2)} we find that the second term of \eqref{term3}\ can be estimate by
the term in \eqref{term4}. This yields 
\begin{equation*}
t_{k}\sum_{j=-\infty }^{k+v}2^{(j-k)K}\mathcal{M}(t_{j}^{-1}g_{j})\lesssim 
\mathcal{M}\big(t_{k}\sum_{j=-\infty }^{k+v}2^{(j-k)K}\mathcal{M}%
(t_{j}^{-1}g_{j})\big)
\end{equation*}%
almost everywhere with the implicit constant is independent of $k$.

\textit{Step 2. }We will prove \eqref{key-est1.1.1}. Let $T$ be a sublinear
operator defined in \eqref{operator2}. We claim that $T$ maps $%
L_{p_{1}}(\ell _{\infty })$ to $L_{p_{1},\infty }(\ell _{\infty })$ and $%
L_{p(1+\varepsilon )}(\ell _{\infty })$ to $L_{p(1+\varepsilon ),\infty
}(\ell _{\infty })$ for some $1<p_{1}<p$. The vector-valued version of the
Marcinkiewicz interpolation; see \cite[Lemma 1]{BCP} and \cite[Lemma 2.5]%
{GLD} yields the boundedness of $T$ from $L_{p}(\ell _{\infty })$ to $%
L_{p}(\ell _{\infty })$. Moreover, whenever $\{g_{k}\}\subset L_{p}(\ell
_{\infty })$ and $t_{k}^{p}\in A_{p}(\mathbb{R}^{n})$, $k\in \mathbb{Z}$,%
\begin{align}
& \Big\|\sup_{k\in \mathbb{Z}}\Big(t_{k}\sum_{j=-\infty }^{k+v}2^{(j-k)K}%
\mathcal{M}(t_{j}^{-1}g_{j})\Big)|L_{p}(\mathbb{R}^{n})\Big\|
\label{main-est4} \\
\lesssim & \Big\|\sup_{k\in \mathbb{Z}}\Big(\mathcal{M}\big(%
t_{k}\sum_{j=-\infty }^{k+v}2^{(j-k)K}\mathcal{M}(t_{j}^{-1}g_{j})\big)\Big)%
|L_{p}(\mathbb{R}^{n})\Big\|  \notag \\
\lesssim & \big\|\sup_{j\in \mathbb{Z}}(|g_{j}|)|L_{p}(\mathbb{R}^{n})\big\|,
\notag
\end{align}%
where the first inequality follows by Step 1. In \eqref{main-est4} setting $%
g_{j}=t_{j}f_{j},j\in \mathbb{Z}$\ we then have complete the proof of %
\eqref{key-est1.1.1}.

\textit{Step 3. }We prove the claim. First we prove that $T$ maps $%
L_{p_{1}}(\ell _{\infty })$ to $L_{p_{1},\infty }(\ell _{\infty })$. Since $%
t_{k}^{p}\in A_{\frac{p}{\theta }}(\mathbb{R}^{n})\subset A_{p}(\mathbb{R}%
^{n})$, $k\in \mathbb{Z}$, by Lemma \ref{Ap-Property}/(ii)-(iii) there
exists a $1<p_{1}<p<\infty $ such that $t_{k}^{p}\in A_{p_{1}}(\mathbb{R}%
^{n})$ and $t_{k}^{p_{1}}\in A_{p_{1}}(\mathbb{R}^{n})$. Let $%
\{g_{k}\}\subset L_{p_{1}}(\ell _{\infty })$ and $\delta >0$. We set 
\begin{equation*}
E_{\delta }=\Big\{x\in \mathbb{R}^{n}:\sup_{k\in \mathbb{Z}}\Big(\mathcal{M}%
\big(t_{k}\sum_{j=-\infty }^{k+v}2^{(j-k)K}\mathcal{M}(t_{j}^{-1}g_{j})\big)%
(x)\Big)>\delta \Big\}.
\end{equation*}%
By Step 1, 
\begin{equation*}
\Big\{x\in \mathbb{R}^{n}:\sup_{k\in \mathbb{Z}}\Big(t_{k}(x)\sum_{j=-\infty
}^{k+v}2^{(j-k)K}\mathcal{M}(t_{j}^{-1}g_{j})\big)(x)\Big)>c\delta \Big\}%
\subset E_{\delta }
\end{equation*}%
for some positive constant $c$ independent of $\delta $ and $x$. $E_{\delta
} $ is open set. Indeed, for $x\in E_{\delta }$ there is an $k_{0}\in 
\mathbb{Z}$ such that 
\begin{equation*}
\mathcal{M}\big(t_{k_{0}}\sum_{j=-\infty }^{k_{0}+v}2^{(j-k)K}\mathcal{M}%
(t_{j}^{-1}g_{j})\big)(x)>\delta .
\end{equation*}%
Then there is an open cube $Q^{x,k_{0}}$ that contains $x$ such that 
\begin{equation*}
M_{Q^{x,k_{0}}}(t_{k_{0}}\sum_{j=-\infty }^{k_{0}+v}2^{(j-k)K}\mathcal{M}%
(t_{j}^{-1}g_{j}))>\delta .
\end{equation*}%
Therefore%
\begin{equation*}
\mathcal{M}\big(t_{k_{0}}\sum_{j=-\infty }^{k_{0}+v}2^{(j-k)K}\mathcal{M}%
(t_{j}^{-1}g_{j})\big)(y)\geq M_{Q_{x,k_{0}}}\big(t_{k_{0}}\sum_{j=-\infty
}^{k_{0}+v}2^{(j-k)K}\mathcal{M}(t_{j}^{-1}g_{j})\big)>\delta
\end{equation*}%
for any $y\in Q^{x,k_{0}}$. Then $Q^{x,k_{0}}\subset E_{\delta }$. Observe
that%
\begin{equation*}
E_{\delta }\subset \bigcup_{k=-\infty }^{\infty }\Big\{x\in \mathbb{R}^{n}:%
\mathcal{M}\big(t_{k}\sum_{j=-\infty }^{k+v}2^{(j-k)K}\mathcal{M}%
(t_{j}^{-1}g_{j})\big)>\delta \Big\}.
\end{equation*}%
Let $K_{\delta }\subset E_{\delta }$ be a compact set. Then%
\begin{equation*}
K_{\delta }\subset \bigcup_{k=0}^{N}\Big\{x\in \mathbb{R}^{n}:\mathcal{M}%
\big(t_{k}\sum_{j=-\infty }^{k+v}2^{(j-k)K}\mathcal{M}(t_{j}^{-1}g_{j})\big)%
>\delta \Big\},\quad N\in \mathbb{N}.
\end{equation*}%
We have%
\begin{align*}
|K_{\delta }|\leq & \sum_{k=0}^{N}\Big|\Big\{x\in \mathbb{R}^{n}:\mathcal{M}%
\big(t_{k}\sum_{j=-\infty }^{k+v}2^{(j-k)K}\mathcal{M}(t_{j}^{-1}g_{j})\big)%
(x)>\delta \Big\}\Big| \\
=& \delta ^{-p_{1}}\sum_{k=0}^{N}\int_{\big\{x\in \mathbb{R}^{n}:\mathcal{M}%
\big(t_{k}\sum_{j=-\infty }^{k+v}2^{(j-k)K}\mathcal{M}(t_{j}^{-1}g_{j})\big)%
>\delta \big\}}\delta ^{p_{1}}dx \\
<& \delta ^{-p_{1}}\sum_{k=0}^{N}\int_{\mathbb{R}^{n}}\Big(\mathcal{M}\big(%
t_{k}\sum_{j=-\infty }^{k+v}2^{(j-k)K}\mathcal{M}(t_{j}^{-1}g_{j})\big)(x)%
\Big)^{p_{1}}dx.
\end{align*}%
By Lemma \ref{key-estimate1.1}, we see that%
\begin{align*}
\int_{\mathbb{R}^{n}}\Big(\mathcal{M}\big(t_{k}\sum_{j=-\infty
}^{k+v}2^{(j-k)K}\mathcal{M}(t_{j}^{-1}g_{j})\big)\Big)^{p_{1}}dx=& \Big\|%
\mathcal{M}\big(t_{k}\sum_{j=-\infty }^{k+v}2^{(j-k)K}\mathcal{M}%
(t_{j}^{-1}g_{j})\big)|L_{p_{1}}(\mathbb{R}^{n})\Big\|^{p_{1}} \\
\lesssim & \Big(\sum_{j=-\infty }^{k+v}2^{(j-k)K}\big\|t_{k}\mathcal{M}%
(t_{j}^{-1}g_{j})|L_{p_{1}}(\mathbb{R}^{n})\big\|\Big)^{p_{1}} \\
\lesssim & \big\|\sup_{j\in \mathbb{Z}}(g_{j})|L_{p_{1}}(\mathbb{R}^{n})%
\big\|^{p_{1}}.
\end{align*}%
Then%
\begin{equation*}
\delta |K_{\delta }|^{\frac{1}{p_{1}}}\lesssim \big\|\sup_{j\in \mathbb{Z}%
}(|g_{j}|)|L_{p_{1}}(\mathbb{R}^{n})\big\|
\end{equation*}%
for any $\delta >0$. Taking the supremum over all compact $K_{\delta
}\subset $ $E_{\delta }$ and using the inner regularity of Lebesgue measure,
we obtain%
\begin{equation}
\sup_{\delta >0}\delta |E_{\delta }|^{\frac{1}{p_{1}}}\lesssim \big\|%
\sup_{j\in \mathbb{Z}}(|g_{j}|)|L_{p_{1}}(\mathbb{R}^{n})\big\|.
\label{est5}
\end{equation}%
Using again $t_{k}^{p}\in A_{\frac{p}{\theta }}(\mathbb{R}^{n})\subset A_{p}(%
\mathbb{R}^{n})$, $k\in \mathbb{Z}$, by Theorem \ref{reverse Holder
inequality} there exists an $\varepsilon >0$, independent on $k$, such that $%
t_{k}^{p(1+\varepsilon )}\in A_{p}(\mathbb{R}^{n})\subset A_{p(1+\varepsilon
)}(\mathbb{R}^{n}),k\in \mathbb{Z}$. Let $\{g_{k}\}\subset
L_{p(1+\varepsilon )}(\ell _{\infty })$. Similarly to the estimate $\mathrm{%
\eqref{est5}}$, we obtain%
\begin{equation*}
\sup_{\delta >0}\delta |E_{\delta }|^{\frac{1}{p(1+\varepsilon )}}\lesssim %
\big\|\sup_{j\in \mathbb{Z}}(|g_{j}|)|L_{p(1+\varepsilon )}(\mathbb{R}^{n})%
\big\|.
\end{equation*}%
This proves the claim. The proof is complete.
\end{proof}

\begin{remark}
\label{r-estimates copy(1)}Let $i\in \mathbb{Z},1<\theta \leq p<\infty
,1<q\leq \infty \ $and $\alpha =(\alpha _{1},\alpha _{2})\in \mathbb{R}^{2}$%
. Let $\{t_{k}\}\in \dot{X}_{\alpha ,\sigma ,p}$ be a $p$-admissible weight
sequence with $\sigma =(\sigma _{1}=\theta \left( \frac{p}{\theta }\right)
^{\prime },\sigma _{2}\geq p)$. From Lemma \ref{key-estimate1.1} we easily
obtain 
\begin{equation*}
\Big\|\Big(\sum\limits_{k=-\infty }^{\infty }t_{k}^{q}\big(\mathcal{M}%
(f_{k+i})\big)^{q}\Big)^{\frac{1}{q}}|L_{p}(\mathbb{R}^{n})\Big\|\lesssim %
\Big\|\Big(\sum\limits_{k=-\infty }^{\infty }t_{k}^{q}\left\vert
f_{k}\right\vert ^{q}\Big)^{\frac{1}{q}}|L_{p}(\mathbb{R}^{n})\Big\|
\end{equation*}%
holds for all sequence of functions $\{t_{k}f_{k}\}\in L_{p}(\ell _{q})$,
where the implicit constant depends on $i$. Indeed, we have 
\begin{equation*}
\mathcal{M}(f_{k+i})\leq \sum\limits_{j=-\infty }^{k+i}2^{(j-k-i)M}\mathcal{M%
}(f_{j}),\quad M>\alpha _{2},k\in \mathbb{Z}.
\end{equation*}%
Lemma \ref{key-estimate1.1} yields the desired result.
\end{remark}

\begin{remark}
\label{r-estimates}$\mathrm{(i)}$ We would like to mention that the result
of this section is true if we assume that $t_{k}\in A_{\frac{p}{\theta }}(%
\mathbb{R}^{n})$,\ $k\in \mathbb{Z}$, $1<p<\infty $ with%
\begin{equation*}
A_{\frac{p}{\theta }}(t_{k})\leq c,\quad k\in \mathbb{Z},
\end{equation*}%
where $c>0$ independent of $k$.$\newline
\mathrm{(ii)}$ A proof of this result\ for $t_{k}^{p}=\omega $, $k\in 
\mathbb{Z}$ may be found in \cite{AJ80} and \cite{Kok78}.$\newline
\mathrm{(iii)}$ In view of Lemma \ref{Ap-Property}/(i) we can assume that $%
t_{k}^{p}\in A_{p}(\mathbb{R}^{n})$,\ $k\in \mathbb{Z}$, $1<p<\infty $ with%
\begin{equation*}
A_{p}(t_{k}^{p})\leq c,\quad k\in \mathbb{Z},
\end{equation*}%
where $c>0$ independent of $k$.
\end{remark}

\section{The spaces ${\dot{A}_{p,q}(\mathbb{R}^{n},\{t_{k}\})}$}

Select a pair of Schwartz functions $\varphi $ and $\psi $ satisfy%
\begin{equation}
\text{supp}\mathcal{F}\varphi ,\mathcal{F}\psi \subset \big\{\xi :\frac{1}{2}%
\leq |\xi |\leq 2\big\},  \label{Ass1}
\end{equation}%
\begin{equation}
|\mathcal{F}\varphi (\xi )|,|\mathcal{F}\psi (\xi )|\geq c\quad \text{if}%
\quad \frac{3}{5}\leq |\xi |\leq \frac{5}{3}  \label{Ass2}
\end{equation}%
and 
\begin{equation}
\sum_{k=-\infty }^{\infty }\overline{\mathcal{F}\varphi (2^{-k}\xi )}%
\mathcal{F}\psi (2^{-k}\xi )=1\quad \text{if}\quad \xi \neq 0,  \label{Ass3}
\end{equation}%
where $c>0$. Throughout the paper, for all $k$ $\in \mathbb{Z}$ and $x\in 
\mathbb{R}^{n}$, we put $\varphi _{k}(x):=2^{kn}\varphi (2^{k}x)$. Let $%
\varphi \in \mathcal{S}(\mathbb{R}^{n})$ be a function satisfying $\mathrm{%
\eqref{Ass1}}$-$\mathrm{\eqref{Ass2}}$. Recall that there exists a function $%
\psi \in \mathcal{S}(\mathbb{R}^{n})$ satisfying $\mathrm{\eqref{Ass1}}$-$%
\mathrm{\eqref{Ass3}}$, see \cite[Lemma (6.9)]{FrJaWe01}. We set%
\begin{equation*}
\mathcal{S}_{\infty }(\mathbb{R}^{n}):=\Big\{\varphi \in \mathcal{S}(\mathbb{%
R}^{n}):\int_{\mathbb{R}^{n}}x^{\beta }\varphi (x)dx=0\text{ for all
multi-indices }\beta \in \mathbb{N}_{0}^{n}\Big\}.
\end{equation*}%
Let $\mathcal{S}_{\infty }^{\prime }(\mathbb{R}^{n})$ be the topological
dual of $\mathcal{S}_{\infty }(\mathbb{R}^{n})$, namely, the set of all
continuous linear functionals on $\mathcal{S}_{\infty }(\mathbb{R}^{n})$.

Now, we define the spaces under consideration.

\begin{definition}
\label{B-F-def}Let $0<p\leq \infty $ and $0<q\leq \infty $. Let $\{t_{k}\}$
be a $p$-admissible weight sequence, and $\varphi \in \mathcal{S}(\mathbb{R}%
^{n})$\ satisfy $\mathrm{\eqref{Ass1}}$ and $\mathrm{\eqref{Ass2}}$.\newline
$\mathrm{(i)}$ The Besov space $\dot{B}_{p,q}(\mathbb{R}^{n},\{t_{k}\})$\ is
the collection of all $f\in \mathcal{S}_{\infty }^{\prime }(\mathbb{R}^{n})$%
\ such that 
\begin{equation*}
\big\|f|\dot{B}_{p,q}(\mathbb{R}^{n},\{t_{k}\})\big\|:=\Big(%
\sum\limits_{k=-\infty }^{\infty }\big\|t_{k}(\varphi _{k}\ast f)|L_{p}(%
\mathbb{R}^{n})\big\|^{q}\Big)^{\frac{1}{q}}<\infty
\end{equation*}%
with the usual modifications if $q=\infty $.\newline
$\mathrm{(ii)}$ Let $0<p<\infty $. The Triebel-Lizorkin space $\dot{F}_{p,q}(%
\mathbb{R}^{n},\{t_{k}\})$\ is the collection of all $f\in \mathcal{S}%
_{\infty }^{\prime }(\mathbb{R}^{n})$\ such that 
\begin{equation*}
\big\|f|\dot{F}_{p,q}(\mathbb{R}^{n},\{t_{k}\})\big\|:=\Big\|\Big(%
\sum\limits_{k=-\infty }^{\infty }t_{k}^{q}|\varphi _{k}\ast f|^{q}\Big)^{%
\frac{1}{q}}|L_{p}(\mathbb{R}^{n})\Big\|<\infty
\end{equation*}%
with the usual modifications if $q=\infty $.
\end{definition}

Now, we define the spaces $\dot{F}_{\infty ,q}(\mathbb{R}^{n},\{t_{k}\})$,
see \cite{D20.3}.

\begin{definition}
Let $0<q<\infty $.\ Let $\{t_{k}\}$ be a $q$-admissible weight sequence, and 
$\varphi \in \mathcal{S}(\mathbb{R}^{n})$\ satisfy $\mathrm{\eqref{Ass1}}$
and $\mathrm{\eqref{Ass2}}$. The Triebel-Lizorkin\ space $\dot{F}_{\infty
,q}(\mathbb{R}^{n},\{t_{k}\})$\ is the collection of all $f\in \mathcal{S}%
_{\infty }^{\prime }(\mathbb{R}^{n})$\ such that 
\begin{equation*}
{\big\|}f|\dot{F}_{\infty ,q}(\mathbb{R}^{n},\{t_{k}\}){\big\|}:=\sup_{P\in 
\mathcal{Q}}\Big(\frac{1}{|P|}\int_{P}\sum\limits_{k=-\log _{2}l(P)}^{\infty
}t_{k}^{q}(x)|\varphi _{k}\ast f(x)|^{q}dx\Big)^{\frac{1}{q}}<\infty .
\end{equation*}
\end{definition}

\begin{remark}
Some properties of these function spaces, such as the $\varphi $-transform
characterization in the sense of Frazier and Jawerth, duality, the smooth
atomic and molecular decomposition and the characterization of Besov spaces $%
\dot{B}_{p,q}(\mathbb{R}^{n},\{t_{k}\})$\ in terms of the difference
relations are given in\ \cite{D20}, \cite{D20.1} and \cite{D20.3}.
\end{remark}

\begin{remark}
We would like to mention that the elements of the above spaces are not
distributions but equivalence classes of distributions$.$ We will use $\dot{A%
}_{p,q}(\mathbb{R}^{n},\{t_{k}\})$ to denote either $\dot{B}_{p,q}(\mathbb{R}%
^{n},\{t_{k}\})$ or $\dot{F}_{p,q}(\mathbb{R}^{n},\{t_{k}\})$.
\end{remark}

Using the system $\{\varphi _{k}\}_{k\in \mathbb{Z}}$ we can define the
quasi-norms%
\begin{equation*}
\big\|f|\dot{B}_{p,q}^{s}(\mathbb{R}^{n})\big\|:=\Big(\sum\limits_{k=-\infty
}^{\infty }2^{ksq}\big\|\varphi _{k}\ast f|L_{p}(\mathbb{R}^{n})\big\|^{q}%
\Big)^{\frac{1}{q}}
\end{equation*}%
and%
\begin{equation*}
\big\|f|\dot{F}_{p,q}^{s}(\mathbb{R}^{n})\big\|:=\Big\|\Big(%
\sum\limits_{k=-\infty }^{\infty }2^{ksq}|\varphi _{k}\ast f|^{q}\Big)^{%
\frac{1}{q}}|L_{p}(\mathbb{R}^{n})\Big\|
\end{equation*}%
for constants $s\in \mathbb{R}$ and $0<p,q\leq \infty $ with $0<p<\infty $
in the $\dot{F}$-case. The Besov space $\dot{B}_{p,q}^{s}(\mathbb{R}^{n})$\
consist of all distributions $f\in \mathcal{S}_{\infty }^{\prime }(\mathbb{R}%
^{n})$ for which $\big\|f|\dot{B}_{p,q}^{s}(\mathbb{R}^{n})\big\|<\infty .$
The Triebel-Lizorkin space $\dot{F}_{p,q}^{s}(\mathbb{R}^{n})$\ consist of
all distributions $f\in \mathcal{S}_{\infty }^{\prime }(\mathbb{R}^{n})$ for
which $\big\|f|\dot{F}_{p,q}^{s}(\mathbb{R}^{n})\big\|<\infty .$ Further
details on the classical theory of these spaces can be found in \cite{FJ86}, 
\cite{FJ90} and \cite{T1}.

One recognizes immediately that if $\{t_{k}\}=\{2^{sk}\}$, $s\in \mathbb{R}$%
, then 
\begin{equation*}
\dot{B}_{p,q}(\mathbb{R}^{n},\{2^{sk}\})=\dot{B}_{p,q}^{s}(\mathbb{R}%
^{n}),\quad \dot{F}_{p,q}(\mathbb{R}^{n},\{2^{sk}\})=\dot{F}_{p,q}^{s}(%
\mathbb{R}^{n})
\end{equation*}%
and%
\begin{equation*}
\dot{F}_{\infty ,q}(\mathbb{R}^{n},\{2^{sk}\})=\dot{F}_{\infty ,q}^{s}(%
\mathbb{R}^{n}).
\end{equation*}%
Moreover, for $\{t_{k}\}=\{2^{sk}w\}$, $s\in \mathbb{R}$ with a weight $w$
we re-obtain the weighted Triebel-Lizorkin spaces; we refer to the papers 
\cite{Bui82} and \cite{Tang} for a comprehensive treatment of the weighted
spaces.

Let $\varphi $, $\psi \in \mathcal{S}(\mathbb{R}^{n})$ satisfying %
\eqref{Ass1}\ through\ \eqref{Ass3}. Recall that the $\varphi $-transform $%
S_{\varphi }$ is defined by setting $(S_{\varphi }f)_{k,m}=\langle f,\varphi
_{k,m}\rangle $ where $\varphi _{k,m}(x)=2^{\frac{kn}{2}}\varphi (2^{k}x-m)$%
, $m\in \mathbb{Z}^{n}$ and $k\in \mathbb{Z}$. The inverse $\varphi $%
-transform $T_{\psi }$ is defined by 
\begin{equation*}
T_{\psi }\lambda :=\sum_{k=-\infty }^{\infty }\sum_{m\in \mathbb{Z}%
^{n}}\lambda _{k,m}\psi _{k,m},
\end{equation*}%
where $\lambda =\{\lambda _{k,m}\}_{k\in \mathbb{Z},m\in \mathbb{Z}%
^{n}}\subset \mathbb{C}$, see \cite{FJ90}.

Now we introduce the following sequence spaces.

\begin{definition}
\label{sequence-space}Let $0<p\leq \infty $ and $0<q\leq \infty $. Let $%
\{t_{k}\}$ be a $p$-admissible weight sequence. Then for all complex valued
sequences $\lambda =\{\lambda _{k,m}\}_{k\in \mathbb{Z},m\in \mathbb{Z}%
^{n}}\subset \mathbb{C}$ we define%
\begin{equation*}
\dot{b}_{p,q}(\mathbb{R}^{n},\{t_{k}\}):=\Big\{\lambda :\big\|\lambda |\dot{b%
}_{p,q}(\mathbb{R}^{n},\{t_{k}\})\big\|<\infty \Big\},
\end{equation*}%
where%
\begin{equation*}
\big\|\lambda |\dot{b}_{p,q}(\mathbb{R}^{n},\{t_{k}\})\big\|:=\Big(%
\sum_{k=-\infty }^{\infty }2^{\frac{knq}{2}}\big\|\sum\limits_{m\in \mathbb{Z%
}^{n}}t_{k}\lambda _{k,m}\chi _{k,m}|L_{p}(\mathbb{R}^{n})\big\|^{q}\Big)^{%
\frac{1}{q}}
\end{equation*}%
and 
\begin{equation*}
\dot{f}_{p,q}(\mathbb{R}^{n},\{t_{k}\}):=\Big\{\lambda :\big\|\lambda |\dot{f%
}_{p,q}(\mathbb{R}^{n},\{t_{k}\})\big\|<\infty \Big\}
\end{equation*}%
with $0<p<\infty $, where%
\begin{equation*}
\big\|\lambda |\dot{f}_{p,q}(\mathbb{R}^{n},\{t_{k}\})\big\|:=\Big\|\Big(%
\sum_{k=-\infty }^{\infty }\sum\limits_{m\in \mathbb{Z}^{n}}2^{\frac{knq}{2}%
}t_{k}^{q}|\lambda _{k,m}|^{q}\chi _{k,m}\Big)^{\frac{1}{q}}|L_{p}(\mathbb{R}%
^{n})\Big\|.
\end{equation*}
\end{definition}

Allowing the smoothness $t_{k}$, $k\in \mathbb{Z}$ to vary from point to
point will raise extra difficulties\ to study these function spaces. But by
the following lemma the problem can be reduced to the case of fixed
smoothness, see {\cite{D20.2}}.

\begin{proposition}
\label{Equi-norm1}Let $0<p\leq \infty $ and $0<q\leq \infty $. Let $%
\{t_{k}\} $ be a $p$-admissible weight sequence.\newline
$\mathrm{(i)}$ Then%
\begin{equation*}
\big\|\lambda |\dot{b}_{p,q}(\mathbb{R}^{n},\{t_{k}\})\big\|^{\ast }:=\Big(%
\sum_{k=-\infty }^{\infty }2^{\frac{knq}{2}}\Big(\sum\limits_{m\in \mathbb{Z}%
^{n}}|\lambda _{k,m}|^{p}t_{k,m}^{p}\Big)^{\frac{q}{p}}\Big)^{\frac{1}{q}},
\end{equation*}%
\textrm{\ }is an equivalent quasi-norm in $\dot{b}_{p,q}(\mathbb{R}%
^{n},\{t_{k}\})$.\newline
$\mathrm{(ii)}$ Let $0<\theta \leq p<\infty $, $0<q<\infty $. Assume that $%
\{t_{k}\}$ satisfying $\mathrm{\eqref{Asum1}}$ with $\sigma _{1}=\theta
\left( \frac{p}{\theta }\right) ^{\prime }$ and $j=k$. Then%
\begin{equation*}
\big\|\lambda |\dot{f}_{p,q}(\mathbb{R}^{n},\{t_{k}\})\big\|^{\ast }:=\Big\|%
\Big(\sum_{k=-\infty }^{\infty }\sum\limits_{m\in \mathbb{Z}^{n}}2^{knq(%
\frac{1}{2}+\frac{1}{p})}t_{k,m}^{q}|\lambda _{k,m}|^{q}\chi _{k,m}\Big)^{%
\frac{1}{q}}|L_{p}(\mathbb{R}^{n})\Big\|,
\end{equation*}%
is an equivalent quasi-norm in $\dot{f}_{p,q}(\mathbb{R}^{n},\{t_{k}\})$,
where%
\begin{equation*}
t_{k,m}:=\big\|t_{k}|L_{p}(Q_{k,m})\big\|,\quad k\in \mathbb{Z},m\in \mathbb{%
Z}^{n}.
\end{equation*}
\end{proposition}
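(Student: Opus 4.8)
The plan is to handle the two parts separately: part (i) is in fact an identity, while part (ii) carries all the analysis. For part (i) the only ingredient is that, for each fixed $k$, the cubes $\{Q_{k,m}\}_{m\in\mathbb{Z}^{n}}$ are pairwise disjoint and tile $\mathbb{R}^{n}$. Expanding the $L_{p}$-quasi-norm cube by cube, for $0<p<\infty$ one gets
\[
\big\|\sum_{m\in\mathbb{Z}^{n}}t_{k}\lambda_{k,m}\chi_{k,m}|L_{p}(\mathbb{R}^{n})\big\|^{p}=\sum_{m\in\mathbb{Z}^{n}}|\lambda_{k,m}|^{p}\int_{Q_{k,m}}t_{k}^{p}\,dx=\sum_{m\in\mathbb{Z}^{n}}|\lambda_{k,m}|^{p}t_{k,m}^{p},
\]
since $t_{k,m}^{p}=\big\|t_{k}|L_{p}(Q_{k,m})\big\|^{p}$; for $p=\infty$ the essential supremum over the disjoint union is $\sup_{m}|\lambda_{k,m}|t_{k,m}$. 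Hence $\big\|\lambda|\dot{b}_{p,q}(\mathbb{R}^{n},\{t_{k}\})\big\|$ and its starred version coincide identically, needing no hypothesis on $\{t_{k}\}$ beyond $p$-admissibility.

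For part (ii) I would first use the same disjointness to rewrite both quasi-norms pointwise. Put $c_{k,m}=2^{kn/2}|\lambda_{k,m}|$, $h_{k}=\sum_{m}c_{k,m}\chi_{k,m}$, $G_{k}=t_{k}h_{k}$ and $H_{k}=\sum_{m}M_{Q_{k,m},p}(t_{k})c_{k,m}\chi_{k,m}$; noting $2^{kn(\frac12+\frac1p)}t_{k,m}=2^{kn/2}M_{Q_{k,m},p}(t_{k})$ on $Q_{k,m}$, one has $\big\|\lambda|\dot{f}_{p,q}\big\|=\big\|(\sum_{k}G_{k}^{q})^{1/q}|L_{p}(\mathbb{R}^{n})\big\|$ and $\big\|\lambda|\dot{f}_{p,q}\big\|^{\ast}=\big\|(\sum_{k}H_{k}^{q})^{1/q}|L_{p}(\mathbb{R}^{n})\big\|$. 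Everything thus reduces to comparing, on each $Q_{k,m}$, the pointwise value $t_{k}(x)$ with its $L_{p}$-average $M_{Q_{k,m},p}(t_{k})$, for which two elementary facts suffice. First, since $\eqref{Asum1}$ with $j=k$ is precisely $t_{k}^{p}\in A_{\frac{p}{\theta}}(\mathbb{R}^{n})$ with uniform constant (and persists with $\theta$ replaced by any smaller value), the power-mean inequality $M_{Q,\sigma_{1}}(t_{k}^{-1})^{-1}\le M_{Q,\sigma}(t_{k})$ with $\sigma_{1}=\sigma(p/\sigma)'$ gives the reverse-Hölder-type bound $M_{Q,p}(t_{k})\le C\,M_{Q,\sigma}(t_{k})$ for all $0<\sigma\le\theta$, uniformly in $k$ and $Q$. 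Second, Hölder's inequality gives the lower bound $M_{Q,p}(t_{k})\,M_{Q,\eta}(t_{k}^{-1})\ge1$ for every $\eta>0$ and every cube $Q$.

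The estimate $\big\|\lambda|\dot{f}_{p,q}\big\|^{\ast}\lesssim\big\|\lambda|\dot{f}_{p,q}\big\|$ is then quick: fixing $\sigma<\min(\theta,p,q)$ and using that $c_{k,m}$ is constant on $Q_{k,m}$, the first fact yields $H_{k}=M_{Q_{k,m},p}(G_{k})\le C\,M_{Q_{k,m},\sigma}(G_{k})\le C\,\mathcal{M}_{\sigma}(G_{k})$ pointwise, and Theorem \ref{FS-inequality} (legitimate since $\sigma<\min(p,q)$) finishes. For the reverse inequality I would set $\phi_{k}=H_{k}/t_{k}$, so $t_{k}\phi_{k}=H_{k}$, and fix $\eta<\min(\theta,p,q)$; the second fact gives, for $x\in Q_{k,m}$,
\[
\mathcal{M}_{\eta}(\phi_{k})(x)\ge M_{Q_{k,m},\eta}(\phi_{k})=M_{Q_{k,m},p}(t_{k})\,M_{Q_{k,m},\eta}(t_{k}^{-1})\,c_{k,m}\ge c_{k,m},
\]
whence $G_{k}=t_{k}h_{k}\le t_{k}\,\mathcal{M}_{\eta}(\phi_{k})$. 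Writing $\mathcal{M}_{\eta}(\phi_{k})=(\mathcal{M}(\phi_{k}^{\eta}))^{1/\eta}$ and substituting $w_{k}=t_{k}^{\eta}$, $F_{k}=\phi_{k}^{\eta}$, $P=p/\eta$, $Q=q/\eta$, the target $\big\|(\sum_{k}t_{k}^{q}(\mathcal{M}_{\eta}\phi_{k})^{q})^{1/q}|L_{p}\big\|$ becomes $\big\|(\sum_{k}(w_{k}\mathcal{M}F_{k})^{Q})^{1/Q}|L_{P}\big\|^{1/\eta}$; since $P,Q>1$ and, with $\Theta=\theta/\eta>1$, $w_{k}^{P}=t_{k}^{p}\in A_{P/\Theta}(\mathbb{R}^{n})=A_{\frac{p}{\theta}}(\mathbb{R}^{n})$, Lemma \ref{key-estimate1} applies and, after undoing the substitution, gives $\big\|\lambda|\dot{f}_{p,q}\big\|\lesssim\big\|\lambda|\dot{f}_{p,q}\big\|^{\ast}$.

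The main obstacle is precisely this reverse estimate in the range $0<q\le1$ (and $0<p\le1$), where the weighted vector-valued maximal inequality of Lemma \ref{key-estimate1} is not directly available. The device that rescues the argument is the single choice $\eta<\min(\theta,p,q)$, which simultaneously lowers the maximal operator to $\mathcal{M}_{\eta}$—so that the pointwise minorization $\mathcal{M}_{\eta}(\phi_{k})\ge h_{k}$ still holds by the Hölder lower bound—and raises the effective integrability exponents to $P=p/\eta>1$ and $Q=q/\eta>1$, exactly the regime in which Lemma \ref{key-estimate1} is valid. I should also record that $\theta$ may be taken arbitrarily small (because $t_{k}^{p}\in A_{\frac{p}{\theta}}(\mathbb{R}^{n})\subset A_{\frac{p}{\theta'}}(\mathbb{R}^{n})$ for $\theta'<\theta$), which is what guarantees that $\sigma$ and $\eta$ can indeed be chosen below $\min(p,q)$.
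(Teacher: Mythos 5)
Your proof is correct, but a caveat on the comparison: the paper does not prove Proposition \ref{Equi-norm1} internally; it quotes it from \cite{D20.2}, so there is no in-text proof to measure your argument against. That said, your route is exactly the one the paper's toolkit is designed for, and each step checks out. Part (i) is indeed an exact identity (not merely an equivalence) because for fixed $k$ the cubes $\{Q_{k,m}\}_{m\in\mathbb{Z}^{n}}$ are pairwise disjoint and tile $\mathbb{R}^{n}$. In part (ii), your identification of \eqref{Asum1} with $j=k$ as a uniform $A_{\frac{p}{\theta}}(\mathbb{R}^{n})$ condition on $t_{k}^{p}$ is correct (raise the displayed inequality to the power $p$ and note $p\,\tfrac{(p/\theta)'}{p/\theta}=\theta(p/\theta)'=\sigma_{1}$); the bound $M_{Q,p}(t_{k})\le C\,M_{Q,\sigma}(t_{k})$ for $0<\sigma\le\theta$ follows as you say from monotonicity of the $A_{r}$ classes plus the H\"older lower bound $1\le M_{Q,a}(t_{k})M_{Q,b}(t_{k}^{-1})$ (valid for all $a,b>0$); and the two directions are correctly reduced to Theorem \ref{FS-inequality} (forward) and to Lemma \ref{key-estimate1} after the $\eta$-rescaling (reverse), which is precisely the device that circumvents the restriction to exponents above $1$. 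One point you should make explicit when invoking Lemma \ref{key-estimate1}: it is stated for weights having literally the same Muckenhoupt constant, whereas your hypotheses only give $A_{\frac{p}{\theta}}(t_{k}^{p})\le C$ uniformly in $k$; this mismatch is harmless because Remark \ref{r-estimates copy(2)}(i) and Remark \ref{r-estimates}(i) record that uniformly bounded constants suffice, but the citation of those remarks is needed to close that step.
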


We define $\dot{f}_{\infty ,q}(\mathbb{R}^{n},\{t_{k}\})$, the sequence
space corresponding to\ $\dot{F}_{\infty ,q}(\mathbb{R}^{n},\{t_{k}\})$ as
follows.

\begin{definition}
Let $0<q<\infty $ and $\{t_{k}\}$ be a $q$-admissible sequence. Then for all
complex valued sequences $\lambda =\{\lambda _{k,m}\}_{k\in \mathbb{Z},m\in 
\mathbb{Z}^{n}}\subset \mathbb{C}$ we define 
\begin{equation*}
\dot{f}_{\infty ,q}(\mathbb{R}^{n},\{t_{k}\}):=\Big\{\lambda :{\big\|}%
\lambda |\dot{f}_{\infty ,q}(\mathbb{R}^{n},\{t_{k}\}){\big\|}<\infty \Big\},
\end{equation*}%
where%
\begin{equation}
{\big\|}\lambda |\dot{f}_{\infty ,q}(\mathbb{R}^{n},\{t_{k}\}){\big\|}%
:=\sup_{P\in \mathcal{Q}}\Big(\frac{1}{|P|}\int_{P}\sum\limits_{k=-\log
_{2}l(P)}^{\infty }\sum\limits_{m\in \mathbb{Z}^{n}}2^{\frac{knq}{2}%
}t_{k}^{q}(x)|\lambda _{k,m}|^{q}\chi _{k,m}(x)dx\Big)^{\frac{1}{q}}.
\label{norm}
\end{equation}
\end{definition}

The quasi-norm \eqref{norm} can be rewritten as follows:

\begin{proposition}
\label{equi-q=infinity}Let $0<q<\infty $. Let $\{t_{k}\}$ be a $q$%
-admissible sequence. Then 
\begin{equation*}
\big\|\lambda |\dot{f}_{\infty ,q}(\mathbb{R}^{n},\{t_{k}\})\big\|%
=\sup_{P\in \mathcal{Q}}\Big(\frac{1}{|P|}\int_{P}\sum\limits_{k=-\log
_{2}l(P)}^{\infty }\sum\limits_{m\in \mathbb{Z}^{n}}2^{knq(\frac{1}{2}+\frac{%
1}{q})}t_{k,m,q}^{q}|\lambda _{k,m}|^{q}\chi _{k,m}(x)dx\Big)^{\frac{1}{q}},
\end{equation*}%
where%
\begin{equation*}
t_{k,m,q}:=\big\|t_{k}|L_{q}(Q_{k,m})\big\|,\quad k\in \mathbb{Z},m\in 
\mathbb{Z}^{n}.
\end{equation*}
\end{proposition}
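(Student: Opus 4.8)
The plan is to verify the stated identity directly, by unravelling both quantities over a fixed dyadic cube and exploiting the nesting of the dyadic lattice; the supremum over $P\in\mathcal{Q}$ is then taken only at the very end. First I fix a dyadic cube $P=Q_{v,m_{0}}\in\mathcal{Q}$ of side length $l(P)=2^{-v}$, so that $-\log_{2}l(P)=v$ and the inner sum in \eqref{norm} runs over $k\geq v$. The key geometric fact is that for every $k\geq v$ the cubes $\{Q_{k,m}\}_{m\in\mathbb{Z}^{n}}$ have side length $2^{-k}\leq 2^{-v}=l(P)$, so by the nesting property of the dyadic lattice each $Q_{k,m}$ is either contained in $P$ or disjoint from $P$. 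Hence on $P$ the function $\chi_{k,m}$ is supported precisely on those $m$ with $Q_{k,m}\subset P$, and for such indices $\int_{P}\chi_{k,m}(x)\,dx=|Q_{k,m}|=2^{-kn}$.

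Next I evaluate the two integrals term by term, the interchange of the nonnegative sums with the integral being justified by Tonelli. In the integrand of \eqref{norm}, for fixed $k\geq v$ and $m$ with $Q_{k,m}\subset P$ the only $x$-dependence is carried by $t_{k}^{q}(x)$, and
\begin{equation*}
\int_{P}t_{k}^{q}(x)\chi_{k,m}(x)\,dx=\int_{Q_{k,m}}t_{k}^{q}(x)\,dx=\big\|t_{k}|L_{q}(Q_{k,m})\big\|^{q}=t_{k,m,q}^{q}.
\end{equation*}
In the candidate right-hand side the factor $t_{k,m,q}^{q}$ is a constant, so
\begin{equation*}
\int_{P}t_{k,m,q}^{q}\chi_{k,m}(x)\,dx=t_{k,m,q}^{q}\,|Q_{k,m}|=2^{-kn}\,t_{k,m,q}^{q}.
\end{equation*}

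It then only remains to match the prefactors. On the left the coefficient of $t_{k,m,q}^{q}|\lambda_{k,m}|^{q}$ is $2^{knq/2}$, whereas on the right it is
\begin{equation*}
2^{knq\left(\frac{1}{2}+\frac{1}{q}\right)}\cdot 2^{-kn}=2^{knq/2}\cdot 2^{kn}\cdot 2^{-kn}=2^{knq/2},
\end{equation*}
since the extra factor $2^{knq\cdot\frac{1}{q}}=2^{kn}$ built into the exponent is cancelled exactly by the volume $|Q_{k,m}|=2^{-kn}$. Therefore, for each dyadic $P$,
\begin{equation*}
\frac{1}{|P|}\int_{P}\sum_{k\geq v}\sum_{m}2^{\frac{knq}{2}}t_{k}^{q}(x)|\lambda_{k,m}|^{q}\chi_{k,m}(x)\,dx=\frac{1}{|P|}\sum_{k\geq v}2^{\frac{knq}{2}}\sum_{m:\,Q_{k,m}\subset P}t_{k,m,q}^{q}|\lambda_{k,m}|^{q},
\end{equation*}
and the identical computation applied to the candidate expression produces the same right-hand side; taking the supremum over $P\in\mathcal{Q}$ gives the claimed equality.

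I do not expect a genuine obstacle here, since the statement is an exact reformulation rather than an estimate. The only two points that require care are the dyadic nesting — which guarantees that for $k\geq v$ every $Q_{k,m}$ meeting $P$ lies inside $P$, so that the pointwise weight $t_{k}^{q}$ integrates to $t_{k,m,q}^{q}$ over each such cube — and the bookkeeping of the exponent, where the additional $2^{kn}$ introduced in passing from the pointwise $t_{k}^{q}(x)$ to the averaged constant $t_{k,m,q}^{q}$ is precisely absorbed by $|Q_{k,m}|=2^{-kn}$.
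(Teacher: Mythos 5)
Your proposal is correct, and it is exactly the computation the paper has in mind: the paper states this proposition without proof (it follows the definition of $\dot{f}_{\infty,q}(\mathbb{R}^{n},\{t_{k}\})$ as an immediate rewriting), and your argument supplies the missing verification in the intended way. The two essential points — that for $k\geq -\log_{2}l(P)$ dyadic nesting makes each $Q_{k,m}$ either contained in $P$ or disjoint from it, so $\int_{P}t_{k}^{q}\chi_{k,m}\,dx=t_{k,m,q}^{q}$, and that the factor $2^{knq\cdot\frac{1}{q}}=2^{kn}$ in the exponent is exactly cancelled by $|Q_{k,m}|=2^{-kn}$ — are precisely the content of the identity, and your Tonelli justification for interchanging the nonnegative sums with the integral is all that is needed.
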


For simplicity, in what follows, we use $\dot{a}_{p,q}(\mathbb{R}%
^{n},\{t_{k}\})$ to denote either $\dot{b}_{p,q}(\mathbb{R}^{n},\{t_{k}\})$
or $\dot{f}_{p,q}(\mathbb{R}^{n},\{t_{k}\})$. Now we have the following
result which is called the $\varphi $-transform characterization in the
sense of Frazier and Jawerth. It will play an important role in the rest of
the paper. Based on Lemmas \ref{key-estimate1} and \ref{key-estimate1.1} the
proof is similar to that of {\cite{D20} and \cite{D20.1} }

\begin{theorem}
\label{phi-tran}Let $\alpha =(\alpha _{1},\alpha _{2})\in \mathbb{R}%
^{2},0<\theta \leq p<\infty $ and$\ 0<q\leq \infty $. Let $\{t_{k}\}\in \dot{%
X}_{\alpha ,\sigma ,p}$ be a $p$-admissible weight sequence with $\sigma
=(\sigma _{1}=\theta \left( \frac{p}{\theta }\right) ^{\prime },\sigma
_{2}\geq p)$.\ Let $\varphi $, $\psi \in \mathcal{S}(\mathbb{R}^{n})$
satisfying $\mathrm{\eqref{Ass1}}$\ through\ $\mathrm{\eqref{Ass3}}$. The
operators 
\begin{equation*}
S_{\varphi }:\dot{A}_{p,q}(\mathbb{R}^{n},\{t_{k}\})\rightarrow \dot{a}%
_{p,q}(\mathbb{R}^{n},\{t_{k}\})
\end{equation*}%
and 
\begin{equation*}
T_{\psi }:\dot{a}_{p,q}(\mathbb{R}^{n},\{t_{k}\})\rightarrow \dot{A}_{p,q}(%
\mathbb{R}^{n},\{t_{k}\})
\end{equation*}%
are bounded. Furthermore, $T_{\psi }\circ S_{\varphi }$ is the identity on $%
\dot{A}_{p,q}(\mathbb{R}^{n},\{t_{k}\})$.
\end{theorem}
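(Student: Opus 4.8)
The plan is to run the Frazier--Jawerth $\varphi$-transform scheme exactly as in \cite{D20},\cite{D20.1}, reducing every norm estimate to the weighted vector-valued maximal inequalities of Lemmas \ref{key-estimate1} and \ref{key-estimate1.1}; the only new ingredient is that those inequalities are now available for $q=\infty$, so once the endpoint $q=\infty$ is admitted the argument is identical to the bounded-$q$ case. Throughout I fix $0<\sigma<\min(\theta,q)$ and replace $\mathcal M$ by $\mathcal M_\sigma$, so as to apply Lemma \ref{key-estimate1} to the weight $\{t_k^\sigma\}$ with exponents $p/\sigma,q/\sigma,\theta/\sigma>1$; here $t_k^p=(t_k^\sigma)^{p/\sigma}\in A_{p/\theta}$ with constant uniform in $k$, by $\{t_k\}\in\dot X_{\alpha,\sigma,p}$ and Remark (i) following Definition \ref{Tyulenev-class}. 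I would dispose of the reproducing identity first, since $T_\psi\circ S_\varphi=\mathrm{id}$ does not involve the weights: condition \eqref{Ass3} yields the Calder\'on reproducing identity $f=\sum_{k,m}\langle f,\varphi_{k,m}\rangle\,\psi_{k,m}=T_\psi S_\varphi f$ in $\mathcal S_\infty'(\mathbb R^n)$, with convergence handled as in \cite{FJ90}.

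For the boundedness of $S_\varphi$, write $\varphi_{k,m}(x)=2^{kn/2}\varphi(2^kx-m)$, so that $\langle f,\varphi_{k,m}\rangle$ is $2^{-kn/2}$ times a sample of $\tilde\varphi_k\ast f$ at the node $2^{-k}m$, where $\tilde\varphi$ is the reflection of $\varphi$. Since $\tilde\varphi$ and $\varphi$ share the spectral support \eqref{Ass1}, the Peetre maximal function estimate gives, for $x\in Q_{k,m}$, that $2^{kn/2}|\langle f,\varphi_{k,m}\rangle|\lesssim\mathcal M_\sigma(\varphi_k\ast f)(x)$. Summing in $m$ over the disjoint cubes $Q_{k,m}$, inserting $t_k$, and taking $\ell_q$ in $k$ and $L_p$ in $x$, the bound $\|S_\varphi f|\dot a_{p,q}\|\lesssim\|f|\dot A_{p,q}\|$ reduces to \[\Big\|\Big(\sum_k t_k^q\big(\mathcal M_\sigma(\varphi_k\ast f)\big)^q\Big)^{1/q}|L_p(\mathbb R^n)\Big\|\lesssim\Big\|\Big(\sum_k t_k^q|\varphi_k\ast f|^q\Big)^{1/q}|L_p(\mathbb R^n)\Big\|,\] which is the $\mathcal M_\sigma$-form of Lemma \ref{key-estimate1} in the $\dot F$/$\dot f$ case; in the $\dot B$/$\dot b$ case the levels decouple and only the scalar inequality of Lemma \ref{key-estimate1.2} is needed, so all $0<q\le\infty$ are covered at once.

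For the boundedness of $T_\psi$, expand for each level $\nu$ the quantity $\varphi_\nu\ast T_\psi\lambda=\sum_{k,m}\lambda_{k,m}(\varphi_\nu\ast\psi_{k,m})$; by \eqref{Ass1} the spectra of $\mathcal F\varphi_\nu$ and $\mathcal F\psi_k$ are disjoint once $|k-\nu|\ge2$, so only $k\in\{\nu-1,\nu,\nu+1\}$ survive. For these $k$ the rapid decay $|\varphi_\nu\ast\psi_{k,m}(x)|\lesssim 2^{kn/2}(1+2^k|x-2^{-k}m|)^{-N}$ converts the $m$-sum into a maximal-function bound, \[|\varphi_\nu\ast T_\psi\lambda(x)|\lesssim\sum_{|k-\nu|\le1}\mathcal M_\sigma\Big(\sum_m 2^{kn/2}|\lambda_{k,m}|\chi_{k,m}\Big)(x).\] Inserting $t_\nu$ and taking $\ell_q$ in $\nu$ and $L_p$ in $x$, the three shifts $k=\nu,\nu\pm1$ are absorbed by Remark \ref{r-estimates copy(1)} (and its scalar analogue Remark \ref{r-estimates copy(2)} for $\dot B$), which permit shifting the index inside $\mathcal M$ at the cost of a harmless constant arising from \eqref{Asum1}--\eqref{Asum2}; a further application of Lemma \ref{key-estimate1} then gives $\|T_\psi\lambda|\dot A_{p,q}\|\lesssim\|\lambda|\dot a_{p,q}\|$, again for every $0<q\le\infty$.

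I expect the genuine difficulty to lie in this last step, namely in controlling the off-diagonal interaction between the analysing scale $\nu$ and the neighbouring synthesis scales $\nu\pm1$ through the weight sequence: a pointwise comparison of $t_\nu$ with $t_{\nu\pm1}$ is unavailable, and the estimate must be routed through the averaged conditions \eqref{Asum1}--\eqref{Asum2} defining $\dot X_{\alpha,\sigma,p}$, as packaged in Remark \ref{r-estimates copy(1)}. The second point requiring care is the endpoint $q=\infty$, where the $\ell_\infty$ Fefferman--Stein step and the Marcinkiewicz interpolation underlying Lemma \ref{key-estimate1} must hold; this is precisely what the preceding section supplies, so no estimate beyond Lemmas \ref{key-estimate1} and \ref{key-estimate1.1} is needed to close the argument.
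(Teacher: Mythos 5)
Your proposal is correct and takes essentially the same approach as the paper: the paper's own proof of Theorem \ref{phi-tran} is exactly the remark that the Frazier--Jawerth $\varphi$-transform argument of \cite{D20} and \cite{D20.1} carries over verbatim once Lemmas \ref{key-estimate1} and \ref{key-estimate1.1} (and hence Remarks \ref{r-estimates copy(1)} and \ref{r-estimates copy(2)}) are available at the endpoint $q=\infty$, which is what you invoke, together with the standard $\mathcal{M}_{\sigma}$ power trick to reach $\theta,p,q\leq 1$. The details you sketch (Peetre-type sampling estimate for $S_{\varphi}$, spectral localization to $|k-\nu|\leq 1$ plus index-shifted maximal inequalities for $T_{\psi}$, Calder\'on reproducing identity for $T_{\psi}\circ S_{\varphi}=\mathrm{id}$) match the cited proofs, so there is no gap beyond the level of detail the paper itself omits.
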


\begin{corollary}
\label{Indpendent}Let $\alpha =(\alpha _{1},\alpha _{2})\in \mathbb{R}%
^{2},0<\theta \leq p<\infty $ and $0<q\leq \infty $. Let $\{t_{k}\}\in \dot{X%
}_{\alpha ,\sigma ,p}$ be a $p$-admissible weight sequence with $\sigma
=(\sigma _{1}=\theta \left( \frac{p}{\theta }\right) ^{\prime },\sigma
_{2}\geq p)$. The definition of the spaces $\dot{A}_{p,q}(\mathbb{R}%
^{n},\{t_{k}\})$ is independent of the choices of $\varphi \in \mathcal{S}(%
\mathbb{R}^{n})$ satisfying $\mathrm{\eqref{Ass1}}$\ and\ $\mathrm{%
\eqref{Ass2}}$.
\end{corollary}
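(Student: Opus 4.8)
The plan is to deduce the statement from the $\varphi$-transform characterization, Theorem \ref{phi-tran}, as an isomorphism-of-retracts argument. I would fix two analysis functions $\varphi^{(0)},\varphi^{(1)}\in\mathcal{S}(\mathbb{R}^{n})$, both satisfying \eqref{Ass1}--\eqref{Ass2}, and write $\|\cdot\|_{i}$ for the quasi-norm of $\dot{A}_{p,q}(\mathbb{R}^{n},\{t_{k}\})$ built from $\varphi^{(i)}$. By \cite[Lemma (6.9)]{FrJaWe01} each $\varphi^{(i)}$ admits a synthesis partner $\psi^{(i)}\in\mathcal{S}(\mathbb{R}^{n})$ so that the pair $(\varphi^{(i)},\psi^{(i)})$ obeys \eqref{Ass1}--\eqref{Ass3}. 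Since the sequence space $\dot{a}_{p,q}(\mathbb{R}^{n},\{t_{k}\})$ of Definition \ref{sequence-space} does not involve $\varphi$, Theorem \ref{phi-tran} applied to each pair gives, for $i\in\{0,1\}$, the boundedness of $S_{\varphi^{(i)}}$ and $T_{\psi^{(i)}}$ together with $T_{\psi^{(i)}}\circ S_{\varphi^{(i)}}=\mathrm{id}$; combining these yields $\|f\|_{i}\approx\big\|S_{\varphi^{(i)}}f\,|\,\dot{a}_{p,q}(\mathbb{R}^{n},\{t_{k}\})\big\|$.

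It then suffices to compare the two sets of coefficients in $\dot{a}_{p,q}(\mathbb{R}^{n},\{t_{k}\})$. Inserting the reproducing identity $T_{\psi^{(1)}}\circ S_{\varphi^{(1)}}=\mathrm{id}$, I would write $S_{\varphi^{(0)}}f=\big(S_{\varphi^{(0)}}\circ T_{\psi^{(1)}}\big)\,S_{\varphi^{(1)}}f$, so that everything reduces to the boundedness on $\dot{a}_{p,q}(\mathbb{R}^{n},\{t_{k}\})$ of the change-of-frame operator $A:=S_{\varphi^{(0)}}\circ T_{\psi^{(1)}}$; by symmetry (swapping the roles of the two pairs) this single bound produces both $\|f\|_{0}\lesssim\|f\|_{1}$ and $\|f\|_{1}\lesssim\|f\|_{0}$, which is the asserted independence. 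The entries of $A$ are $A_{(k,m),(j,l)}=\langle\psi^{(1)}_{j,l},\varphi^{(0)}_{k,m}\rangle$, and here the support condition \eqref{Ass1} is decisive: $\mathcal{F}\varphi^{(0)}(2^{-k}\cdot)$ and $\mathcal{F}\psi^{(1)}(2^{-j}\cdot)$ have overlapping supports only when $|k-j|\le 2$, so $A$ is supported on a bounded band of scales, with rapidly decaying, $L^{1}$-normalized kernels at the common scale $2^{-k}\approx 2^{-j}$. This is what I would verify directly, so as to avoid routing $A$ through any $\varphi$-dependent intermediate space.

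To prove the boundedness of $A$ I would dominate it at each scale pointwise by a Hardy--Littlewood maximal function of the neighbouring coefficients, $\big|(A\lambda)_{k}\big|\lesssim\sum_{|k-j|\le 2}\mathcal{M}(\lambda_{j})$ (in the Peetre form $\mathcal{M}_{\sigma}$ when $p$ or $q$ is small), and then invoke the weighted maximal inequalities established in the preceding section: Lemma \ref{key-estimate1} in the $\dot{f}$-case and the scalar Lemma \ref{key-estimate1.2} in the $\dot{b}$-case, the bounded shift being absorbed by Remarks \ref{r-estimates copy(1)} and \ref{r-estimates copy(2)}. For exponents $p\le 1$ or $q\le 1$ I would first pass to $t_{k}^{\sigma},|\cdot|^{\sigma}$ with $0<\sigma<\min(\theta,q)$, which turns the effective parameters into $\theta/\sigma>1$ and $q/\sigma>1$ while preserving the Muckenhoupt class $A_{p/\theta}$ of $t_{k}^{p}$, thereby reducing to the hypotheses of those lemmas. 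The main obstacle is precisely this boundedness of $A$ at the endpoint $q=\infty$: that case is exactly what the newly established $\ell_{\infty}$-valued inequality of Lemma \ref{key-estimate1} supplies, and once it is in hand the equivalence of quasi-norms, and hence the independence of the choice of $\varphi$, follows immediately. Equivalently, one may bypass the sequence spaces and argue directly on $\varphi^{(1)}_{k}\ast f=\sum_{|k-j|\le 2}(\varphi^{(1)}_{k}\ast\psi^{(0)}_{j})\ast(\varphi^{(0)}_{j}\ast f)$ using the same Fourier-support and maximal-function estimates.
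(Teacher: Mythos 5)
Your proposal is correct and follows essentially the same route as the paper, which states Corollary \ref{Indpendent} as an immediate consequence of Theorem \ref{phi-tran}: since the sequence space $\dot{a}_{p,q}(\mathbb{R}^{n},\{t_{k}\})$ is $\varphi$-free, independence follows from the retract identity together with the almost-diagonal (Fourier-support plus maximal-function) estimates that underlie the proof of Theorem \ref{phi-tran} itself, with Lemmas \ref{key-estimate1} and \ref{key-estimate1.1} supplying the $q=\infty$ case. Your explicit verification that the change-of-frame operator $S_{\varphi^{(0)}}\circ T_{\psi^{(1)}}$ is bounded on the sequence space, rather than routing it through a $\varphi$-dependent intermediate space, correctly handles the circularity that a naive reading of Theorem \ref{phi-tran} would leave open, and is exactly the standard Frazier--Jawerth argument the paper is invoking.
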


\begin{theorem}
\label{Banach}Let $\alpha =(\alpha _{1},\alpha _{2})\in \mathbb{R}%
^{2},0<\theta \leq p<\infty $ and $0<q\leq \infty $. Let $\{t_{k}\}\in \dot{X%
}_{\alpha ,\sigma ,p}$ be a $p$-admissible weight sequence with $\sigma
=(\sigma _{1}=\theta \left( \frac{p}{\theta }\right) ^{\prime },\sigma
_{2}\geq p)$. $\dot{A}_{p,q}(\mathbb{R}^{n},\{t_{k}\})$ are quasi-Banach
spaces. They are Banach spaces if $1\leq p<\infty $ and $1\leq q\leq \infty $%
.
\end{theorem}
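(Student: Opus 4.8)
The plan is to verify first that $\|\cdot\,|\dot{A}_{p,q}(\mathbb{R}^n,\{t_k\})\|$ is a genuine quasi-norm and then to deduce completeness from the $\varphi$-transform characterization of Theorem~\ref{phi-tran}, which reduces the matter to the concrete sequence spaces $\dot{a}_{p,q}(\mathbb{R}^n,\{t_k\})$. Homogeneity and definiteness are immediate: if $\|f\,|\dot{A}_{p,q}\|=0$ then $\varphi_k\ast f=0$ for every $k\in\mathbb{Z}$, whence $f=0$ in $\mathcal{S}_\infty'(\mathbb{R}^n)$ by the reproducing identity \eqref{Ass3}. The quasi-triangle inequality is inherited from those of the $\ell_q$- and $L_p$-quasi-norms entering Definition~\ref{B-F-def}; with $r=\min(1,p,q)$ one obtains $\|f+g\,|\dot{A}_{p,q}\|^r\le\|f\,|\dot{A}_{p,q}\|^r+\|g\,|\dot{A}_{p,q}\|^r$. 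When $1\le p<\infty$ and $1\le q\le\infty$, both $L_p$ and $\ell_q$ are normed, so Minkowski's inequality (for the integral in the $\dot{F}$-case, for the sum in the $\dot{B}$-case) upgrades this to the ordinary triangle inequality, yielding the Banach assertion once completeness is in hand.

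For completeness I would exploit that, by Theorem~\ref{phi-tran}, the pair $(S_\varphi,T_\psi)$ realizes $\dot{A}_{p,q}(\mathbb{R}^n,\{t_k\})$ as a retract of $\dot{a}_{p,q}(\mathbb{R}^n,\{t_k\})$: both operators are bounded and $T_\psi\circ S_\varphi=\mathrm{id}$. Granting completeness of the sequence space, any Cauchy sequence $\{f^{(l)}\}$ in $\dot{A}_{p,q}$ is mapped by $S_\varphi$ to a Cauchy sequence in $\dot{a}_{p,q}$, which converges to some $\lambda$; putting $f:=T_\psi\lambda\in\dot{A}_{p,q}$ and using $f^{(l)}=T_\psi S_\varphi f^{(l)}$ together with boundedness of $T_\psi$ gives $\|f^{(l)}-f\,|\dot{A}_{p,q}\|\lesssim\|S_\varphi f^{(l)}-\lambda\,|\dot{a}_{p,q}\|\to 0$. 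This transfers the whole problem to the sequence spaces.

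It then remains to show that $\dot{a}_{p,q}(\mathbb{R}^n,\{t_k\})$ is complete. The key point is that each coordinate functional $\lambda\mapsto\lambda_{k,m}$ is bounded: using the equivalent quasi-norms of Proposition~\ref{Equi-norm1} one finds, for fixed $(k,m)$, an estimate of the form $2^{kn/2}t_{k,m}|\lambda_{k,m}|\le\|\lambda\,|\dot{a}_{p,q}\|$, where $p$-admissibility together with $t_k>0$ a.e.\ guarantees $t_{k,m}=\|t_k\,|L_p(Q_{k,m})\|\in(0,\infty)$. Hence a Cauchy sequence converges coordinatewise to a sequence $\lambda$, and Fatou's lemma — applied to the double sum over $(k,m)$ in the $\dot{b}$-case and to the integral over $\mathbb{R}^n$ in the $\dot{f}$-case, with the supremum replacing the sum when $q=\infty$ — shows simultaneously that $\lambda\in\dot{a}_{p,q}$ and that $\|\lambda^{(l)}-\lambda\,|\dot{a}_{p,q}\|\to 0$.

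The genuinely substantial ingredient is Theorem~\ref{phi-tran}, which is already at our disposal; granting it, the remaining work is essentially bookkeeping. The hard part will be handling the $q=\infty$ endpoint cleanly in the Fatou step, where the monotone and dominated convergence available for $q<\infty$ must be replaced by the lower semicontinuity of $\sup_k$ under coordinatewise limits, and verifying that the constant in the quasi-triangle inequality and the positivity and finiteness of the weights $t_{k,m}$ hold uniformly in $(k,m)$, so that coordinate evaluation is genuinely continuous.
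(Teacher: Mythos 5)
Your proof is correct, but it follows a genuinely different route from the paper's. The paper does not argue through the sequence spaces at all: it proves Theorem \ref{Banach} together with Theorem \ref{embeddings-S-inf} by repeating the proofs of Theorems 3.12 and 3.13 of \cite{D20.1}, with Lemmas \ref{key-estimate1} and \ref{key-estimate1.1} (now valid for $q=\infty$) replacing the corresponding maximal inequalities there; in that scheme the key ingredient for completeness is the continuous embedding $\dot{A}_{p,q}(\mathbb{R}^n,\{t_k\})\hookrightarrow\mathcal{S}_\infty'(\mathbb{R}^n)$, so that a Cauchy sequence converges in $\mathcal{S}_\infty'(\mathbb{R}^n)$ and a Fatou argument on the defining quasi-norm identifies the limit and gives convergence. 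You instead invoke Theorem \ref{phi-tran} to realize $\dot{A}_{p,q}(\mathbb{R}^n,\{t_k\})$ as a retract of $\dot{a}_{p,q}(\mathbb{R}^n,\{t_k\})$ and prove completeness of the sequence space directly via bounded coordinate functionals plus Fatou. Both routes lean on machinery the paper states without full proof (yours on the $\varphi$-transform theorem, the paper's on the distributional embedding), and neither is circular, since Theorem \ref{phi-tran} is established independently of completeness. What your approach buys is that the completeness question is settled in a completely elementary, concrete setting (coordinatewise limits of scalar sequences), and it bypasses Theorem \ref{embeddings-S-inf} entirely; what the paper's approach buys is that the limit object is produced directly as a distribution, which is the form in which the space is defined, and the same embedding is needed anyway for other purposes. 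Two small remarks: the coordinate bound $2^{kn/2}t_{k,m}|\lambda_{k,m}|\le\|\lambda\,|\dot{a}_{p,q}(\mathbb{R}^n,\{t_k\})\|$ follows directly from Definition \ref{sequence-space} by discarding all but one term, so you do not need Proposition \ref{Equi-norm1} here — which is just as well, since its part (ii) is only stated for $q<\infty$; and the $q=\infty$ Fatou step you flag as the hard part is in fact routine, being exactly the lower semicontinuity $\sup_k \lim_{l}a^{(l)}_k\le\liminf_{l}\sup_k a^{(l)}_k$ followed by Fatou in the $L_p$-integral.
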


\begin{theorem}
\label{embeddings-S-inf}Let $0<\theta \leq p<\infty $ and $0<q\leq \infty $.
Let\ $\{t_{k}\}\in \dot{X}_{\alpha ,\sigma ,p}$ be a $p$-admissible weight
sequence with $\sigma =(\sigma _{1}=\theta \left( \frac{p}{\theta }\right)
^{\prime },\sigma _{2}\geq p)$ and $\alpha =(\alpha _{1},\alpha _{2})\in 
\mathbb{R}^{2}$.\ \newline
$\mathrm{(i)}$ We have the embedding%
\begin{equation*}
\mathcal{S}_{\infty }(\mathbb{R}^{n})\hookrightarrow \dot{A}_{p,q}(\mathbb{R}%
^{n},\{t_{k}\})\hookrightarrow \mathcal{S}_{\infty }^{\prime }(\mathbb{R}%
^{n}).
\end{equation*}%
In addition if $0<q<\infty $, then $\mathcal{S}_{\infty }(\mathbb{R}^{n})$
is dense in $\dot{A}_{p,q}(\mathbb{R}^{n},\{t_{k}\}\mathrm{.}$
\end{theorem}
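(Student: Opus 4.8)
The plan is to prove the three assertions in turn. Both embeddings rest on the same pair of ingredients: (a) rapid decay, in both $k$ and the spatial variable, of the building blocks $\varphi_{k}\ast g$ when $g\in\mathcal{S}_{\infty}(\mathbb{R}^{n})$, and (b) at-most-polynomial-in-space, exponential-in-$k$ control of the weight sequence extracted from the defining conditions \eqref{Asum1}--\eqref{Asum2}. For the left embedding $\mathcal{S}_{\infty}(\mathbb{R}^{n})\hookrightarrow\dot{A}_{p,q}(\mathbb{R}^{n},\{t_{k}\})$ I would fix $g\in\mathcal{S}_{\infty}$ and first prove the standard estimate: since $\mathcal{F}\varphi$ is supported in the annulus $\{\tfrac12\le|\xi|\le2\}$ and $\mathcal{F}g$ vanishes to infinite order at the origin, a Fourier/Taylor argument gives, for every $L,M>0$,
\[
|(\varphi_{k}\ast g)(x)|\le c_{L,M}\,2^{-|k|L}(1+|x|)^{-M},\quad k\in\mathbb{Z},\ x\in\mathbb{R}^{n},
\]
where the negative powers of $k$ come from the smoothness of $g$ for $k\ge0$ and from its vanishing moments for $k<0$. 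Next I would convert \eqref{Asum1}--\eqref{Asum2} into a growth bound for the local quantities $t_{k,m}=\|t_{k}|L_{p}(Q_{k,m})\|$ against a fixed reference cube, yielding $t_{k,m}\lesssim 2^{c|k|}(1+2^{-k}|m|)^{c}$ for some $c>0$. Inserting the decay estimate into the definitions of $\|g|\dot{B}_{p,q}\|$ and $\|g|\dot{F}_{p,q}\|$ (using Proposition \ref{Equi-norm1} in the $\dot{F}$-case) and choosing $L,M$ large relative to $c,\alpha_{1},\alpha_{2},n,q$, the sums over $k$ and $m$ converge and are dominated by finitely many Schwartz seminorms of $g$.

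For the right embedding $\dot{A}_{p,q}(\mathbb{R}^{n},\{t_{k}\})\hookrightarrow\mathcal{S}_{\infty}'(\mathbb{R}^{n})$, given $f\in\dot{A}_{p,q}$ and $\phi\in\mathcal{S}_{\infty}$, I would use the Calder\'on-type reproducing identity associated with \eqref{Ass3} to write $\langle f,\phi\rangle$ as a sum over $k$ of pairings $\langle\varphi_{k}\ast f,\Phi_{k}\rangle$, where $\Phi_{k}$ is built from $\phi$ and $\psi$ and inherits the rapid decay $2^{-|k|L}(1+|x|)^{-M}$ because $\phi\in\mathcal{S}_{\infty}$. Each pairing is then estimated by H\"older's inequality in the dual pair $L_{p}(\mathbb{R}^{n},t_{k})$ and $L_{p'}(\mathbb{R}^{n},t_{k}^{-1})$: summing the first factors in $k$ through an $\ell_{q}$--$\ell_{q'}$ duality gives $\|f|\dot{A}_{p,q}\|$, while the factor $t_{k}^{-1}\Phi_{k}$ is controlled by the same spatial decay together with an upper bound on $t_{k}^{-1}$ again read off from \eqref{Asum1}--\eqref{Asum2}. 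Choosing the decay exponents large enough makes the series converge and yields $|\langle f,\phi\rangle|\lesssim\|f|\dot{A}_{p,q}\|\cdot p_{N}(\phi)$ for a suitable Schwartz seminorm $p_{N}$, i.e. continuity of $f$ on $\mathcal{S}_{\infty}$.

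For the density assertion when $0<q<\infty$, I would first show that the finite sequences (those $\lambda$ with only finitely many nonzero $\lambda_{k,m}$) are dense in $\dot{a}_{p,q}(\mathbb{R}^{n},\{t_{k}\})$; since $p<\infty$ and $q<\infty$, truncation in $k$ and $m$ converges to zero in the equivalent quasi-norms of Proposition \ref{Equi-norm1} by dominated convergence. Applying the bounded inverse transform $T_{\psi}$ of Theorem \ref{phi-tran}, a finite sequence is sent to a finite combination $\sum\lambda_{k,m}\psi_{k,m}$; since $\mathcal{F}\psi$ is supported away from the origin, each $\psi_{k,m}\in\mathcal{S}_{\infty}$, so these images lie in $\mathcal{S}_{\infty}(\mathbb{R}^{n})$. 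Given $f\in\dot{A}_{p,q}$, approximating $S_{\varphi}f$ by finite sequences and applying $T_{\psi}$ produces elements of $\mathcal{S}_{\infty}$ converging to $f=T_{\psi}S_{\varphi}f$, which is the claimed density.

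The main obstacle throughout is ingredient (b): turning the averaged two-sided conditions \eqref{Asum1}--\eqref{Asum2} into the clean pointwise/local growth bounds for $t_{k}$ and $t_{k}^{-1}$ that can be absorbed by the Schwartz decay. The convolution estimates and the interpolation/duality bookkeeping are routine by comparison; the care is in verifying that the admissible-weight exponents never outrun the freely chosen decay parameters $L,M$, and (for density) that truncation is genuinely norm-convergent in the $\dot{F}$-case, where the inner $\ell_{q}$ sum is coupled to the $L_{p}$ norm and $q<\infty$ is essential.
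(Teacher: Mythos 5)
Your plan for the left embedding and for the density assertion is sound and is essentially the standard route, which is also what the paper itself invokes: it gives no self-contained proof but instructs the reader to repeat the proofs of Theorems 3.12--3.13 of \cite{D20.1}, with Lemmas \ref{key-estimate1} and \ref{key-estimate1.1} replacing the maximal inequalities used there. Your two ingredients for $\mathcal{S}_{\infty}\hookrightarrow\dot{A}_{p,q}$ are both correct and obtainable: the decay $|\varphi_{k}\ast g(x)|\lesssim 2^{-|k|L}(1+|x|)^{-M}$ is classical, and the growth bound on $t_{k,m}$ follows because \eqref{Asum1} with $j=k$ gives a $k$-uniform $A_{\frac{p}{\theta}}(\mathbb{R}^{n})$ constant for $t_{k}^{p}$ (hence uniform doubling, which produces the polynomial spatial growth), while \eqref{Asum1}--\eqref{Asum2} across scales give the exponential control in $k$. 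The density argument via truncation of $S_{\varphi}f$ and the boundedness of $T_{\psi}$ from Theorem \ref{phi-tran}, with the observation that finite combinations of the $\psi_{k,m}$ lie in $\mathcal{S}_{\infty}(\mathbb{R}^{n})$, is also the standard and correct one.

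The genuine gap is in the right embedding $\dot{A}_{p,q}(\mathbb{R}^{n},\{t_{k}\})\hookrightarrow\mathcal{S}_{\infty}^{\prime}(\mathbb{R}^{n})$. You estimate each pairing $\langle\varphi_{k}\ast f,\Phi_{k}\rangle$ by H\"older's inequality in the dual pair $L_{p}(\mathbb{R}^{n},t_{k})$, $L_{p^{\prime}}(\mathbb{R}^{n},t_{k}^{-1})$, and sum in $k$ by $\ell_{q}$--$\ell_{q^{\prime}}$ duality. But the theorem is stated for all $0<\theta\leq p<\infty$ and $0<q\leq\infty$, so the quasi-Banach range $0<p<1$ is included, and there $p^{\prime}$ does not exist: no inequality of the form $\int|FG|\,dx\lesssim\|Ft_{k}|L_{p}\|\cdot\|Gt_{k}^{-1}|L_{p^{\prime}}\|$ is available, and indeed a general $L_{p}$ function with $p<1$ need not even be locally integrable, so the pairing cannot be controlled by the weighted $L_{p}$ norm alone. (The case $q<1$ is harmless, since $\ell_{q}\hookrightarrow\ell_{1}$ lets you avoid $\ell_{q^{\prime}}$.) What rescues the argument --- and what the machinery of this paper and of \cite{D20.1} is built around --- is that $\varphi_{k}\ast f$ is band limited: the Plancherel--P\'olya estimate \eqref{phi2}, combined with \eqref{Asum1} and the elementary inequality $1\leq M_{Q,p}(t_{k})M_{Q,\sigma_{1}}(t_{k}^{-1})$ exactly as in Step 1 of the proof of Theorem \ref{new-norm}, yields local sup bounds of the type $\sup_{z\in Q_{k,m}}|\varphi_{k}\ast f(z)|\lesssim t_{k,m}^{-1}\big\|f|\dot{A}_{p,q}(\mathbb{R}^{n},\{t_{k}\})\big\|$ (after an application of Theorem \ref{FS-inequality} for a single function), valid for every $0<p<\infty$. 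One then bounds $|\langle\varphi_{k}\ast f,\Phi_{k}\rangle|\leq\sum_{m}\sup_{Q_{k,m}}|\varphi_{k}\ast f|\int_{Q_{k,m}}|\Phi_{k}|\,dx$ and uses the cross-scale upper bounds on $t_{k,m}^{-1}$ from \eqref{Asum1} together with the rapid decay of $\Phi_{k}$ to sum in $m$ and $k$. So your overall architecture survives, but the duality step must be replaced by this band-limited/maximal-function argument; as written, your proof fails precisely in the quasi-Banach case $p<1$ that the theorem is meant to cover.
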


To prove Theorems \ref{Banach} and \ref{embeddings-S-inf}, using Lemmas \ref%
{key-estimate1} and \ref{key-estimate1.1} to replace Lemmas 2.14 and 2.18 in 
{\cite{D20.1}} and repeating the proof of Theorems 3.12 and 3.13 in {\cite%
{D20.1}.}

The following statements are from {\cite{D20.3}.}

\begin{theorem}
\label{phi-tran2}Let $0<\theta \leq q<\infty $. Let $\{t_{k}\}\in \dot{X}%
_{\alpha ,\sigma ,q}$ be a $q$-admissible weight sequence with $\sigma
=(\sigma _{1}=\theta \left( \frac{q}{\theta }\right) ^{\prime },\sigma
_{2}\geq q)$. Let $\varphi $, $\psi \in \mathcal{S}(\mathbb{R}^{n})$
satisfying $\mathrm{\eqref{Ass1}}$\ through\ $\mathrm{\eqref{Ass3}}$. The
operators 
\begin{equation*}
S_{\varphi }:\dot{F}_{\infty ,q}(\mathbb{R}^{n},\{t_{k}\})\rightarrow \dot{f}%
_{\infty ,q}(\mathbb{R}^{n},\{t_{k}\})
\end{equation*}%
and 
\begin{equation*}
T_{\psi }:\dot{f}_{\infty ,q}(\mathbb{R}^{n},\{t_{k}\})\rightarrow \dot{F}%
_{\infty ,q}(\mathbb{R}^{n},\{t_{k}\})
\end{equation*}%
are bounded. Furthermore, $T_{\psi }\circ S_{\varphi }$ is the identity on $%
\dot{F}_{\infty ,q}(\mathbb{R}^{n},\{t_{k}\})$.
\end{theorem}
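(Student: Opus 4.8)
The plan is to run the Frazier--Jawerth $\varphi$-transform scheme, adapted to the Carleson-type quasi-norm of $\dot{F}_{\infty,q}(\mathbb{R}^{n},\{t_{k}\})$. The argument splits into the reproducing identity $T_{\psi}\circ S_{\varphi}=\mathrm{id}$, the boundedness of $T_{\psi}$, and the boundedness of $S_{\varphi}$. At the outset I would replace the quasi-norm of $\dot{f}_{\infty,q}(\mathbb{R}^{n},\{t_{k}\})$ by the equivalent expression of Proposition \ref{equi-q=infinity}, so that each coefficient $\lambda_{k,m}$ carries the averaged weight $t_{k,m,q}=\|t_{k}\,|\,L_{q}(Q_{k,m})\|$ rather than the pointwise factor $t_{k}$; this turns the kernel bounds below into purely geometric estimates. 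I would also fix an exponent $0<\sigma<\theta$, so that $q/\sigma>1$ and the hypotheses of the weighted maximal inequalities are met on each scale. The reproducing identity itself comes straight from \eqref{Ass3}: since $\sum_{k}\overline{\mathcal{F}\varphi(2^{-k}\xi)}\mathcal{F}\psi(2^{-k}\xi)=1$ for $\xi\neq0$, every $f\in\mathcal{S}_{\infty}^{\prime}(\mathbb{R}^{n})$ satisfies $f=\sum_{k}\psi_{k}\ast\varphi_{k}\ast f=\sum_{k,m}\langle f,\varphi_{k,m}\rangle\psi_{k,m}$ in $\mathcal{S}_{\infty}^{\prime}(\mathbb{R}^{n})$, which is exactly $T_{\psi}S_{\varphi}f=f$; the convergence is legitimate once the two operators are known to be bounded.

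For $T_{\psi}$ I would set $f=T_{\psi}\lambda=\sum_{k,m}\lambda_{k,m}\psi_{k,m}$ and compute $\varphi_{\nu}\ast f=\sum_{k}\sum_{m}\lambda_{k,m}(\varphi_{\nu}\ast\psi_{k,m})$. The spectral supports in \eqref{Ass1} force $\varphi_{\nu}\ast\psi_{k,m}=0$ unless $|k-\nu|\le1$, so only three scales contribute. For those scales the Schwartz decay of $\varphi$ and $\psi$ yields the standard kernel bound $|\varphi_{\nu}\ast\psi_{k,m}(x)|\lesssim 2^{kn/2}(1+2^{k}|x-2^{-k}m|)^{-L}$ for arbitrarily large $L$; summing over $m$ at a fixed scale and choosing $L\sigma>n$ dominates the result by $\mathcal{M}_{\sigma}$ applied to $\sum_{m}2^{kn/2}|\lambda_{k,m}|\chi_{k,m}$. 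Bringing in the averaged weights $t_{k,m,q}$ through Proposition \ref{equi-q=infinity}, I would then majorise the $\dot{F}_{\infty,q}$ quasi-norm of $f$, scale by scale, by the $\dot{f}_{\infty,q}$ quasi-norm of $\lambda$, the transition from the pointwise weight $t_{\nu}$ to the averaged weight being supplied by the maximal inequality associated with the class $\dot{X}_{\alpha,\sigma,q}$.

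For $S_{\varphi}$ I would use that $\langle f,\varphi_{k,m}\rangle=2^{-kn/2}(\tilde{\varphi}_{k}\ast f)(2^{-k}m)$ with $\tilde{\varphi}(x)=\overline{\varphi(-x)}$, so the coefficient is, up to normalisation, a sample of the band-limited function $\tilde{\varphi}_{k}\ast f$. By a Peetre-type maximal estimate this sample is controlled on the whole of $Q_{k,m}$ by $\inf_{y\in Q_{k,m}}\mathcal{M}_{\sigma}(\tilde{\varphi}_{k}\ast f)(y)$; inserting this into the $\dot{f}_{\infty,q}$ quasi-norm and applying once more the maximal inequality reduces $\|S_{\varphi}f\,|\,\dot{f}_{\infty,q}(\mathbb{R}^{n},\{t_{k}\})\|$ to $\|f\,|\,\dot{F}_{\infty,q}(\mathbb{R}^{n},\{t_{k}\})\|$, since $\tilde{\varphi}$ again obeys \eqref{Ass1}--\eqref{Ass2}.

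The hard part is the Carleson structure of the $F_{\infty,q}$ quasi-norm: it is a supremum over dyadic cubes $P$ with the scale sum truncated at $k\ge-\log_{2}l(P)$, so neither the Fefferman--Stein inequality (Theorem \ref{FS-inequality}) nor the weighted inequality of Lemma \ref{key-estimate1} --- both phrased for $L_{p}(\ell_{q})$ with $p<\infty$ --- applies verbatim. For each fixed $P$ one must split the coefficient sum into a local part, indexed by cubes contained in a fixed dilate of $P$ at scales $k\ge-\log_{2}l(P)$, and a far part. The local part is handled by the $L_{q}$-based maximal inequality on $P$; the far part is where the $\pm1$-scale shifts from almost orthogonality must be reconciled with the truncation level $-\log_{2}l(P)$, and where the decay exponent $L$ together with the two-sided polynomial bounds \eqref{Asum1}--\eqref{Asum2} of $\dot{X}_{\alpha,\sigma,q}$ (in particular the role of $\sigma_{2}\ge q$) must be invoked to make the estimate uniform in $P$. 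Securing this tail bound uniformly over all dyadic cubes $P$ is the crux of the argument.
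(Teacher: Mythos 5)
Note first that the paper itself contains no proof of Theorem \ref{phi-tran2} to compare against: it is imported from the preprint \cite{D20.3} (``The following statements are from \cite{D20.3}''). Judged on its own merits, your proposal is a correct high-level transcription of the Frazier--Jawerth scheme --- the reproducing identity from \eqref{Ass3}, the $|k-\nu|\le 1$ almost-orthogonality reduction for $T_{\psi}$, and the Peetre-type sampling estimate for $S_{\varphi}$ are the right ingredients --- but it is not a proof, because the one step that actually has to be proved is the step you explicitly set aside: you write that securing the tail bound uniformly over all dyadic cubes $P$ ``is the crux of the argument,'' and the proposal ends there. Concretely, two things are missing. First, the maximal inequalities available in the paper (Theorem \ref{FS-inequality}, Lemmas \ref{key-estimate1} and \ref{key-estimate1.1}) are all formulated on $L_{p}(\ell_{q})$ with $p<\infty$ and cannot be applied to the quasi-norm $\sup_{P}\bigl(|P|^{-1}\int_{P}\sum_{k\ge -\log_{2}l(P)}\cdots\bigr)^{1/q}$; one must localize by hand, inserting cutoffs $\chi_{B(x_{P},2^{i-k_{P}})}$ inside the maximal operator, summing over $h\in\mathbb{Z}^{n}$ against a decaying weight, applying Fefferman--Stein in $L_{q/\tau}(\ell_{q/\tau})$, and absorbing the measure ratio $|B(x_{P},2^{i-k_{P}})|/|P|$ by taking the decay exponent $M$ large. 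This is exactly the computation displayed in Step 1 of the paper's proof of Theorem \ref{new-norm2}, and your argument needs its analogue for the coefficient sums of $S_{\varphi}$ and $T_{\psi}$; nothing in your text supplies it. Second, for $T_{\psi}$ the scale $k=-\log_{2}l(P)-1$ contributes to $\varphi_{k}\ast T_{\psi}\lambda$ on $P$ with $k$ below the truncation level, and coefficients $\lambda_{k,m}$ with $Q_{k,m}$ far from $P$ contribute through kernel tails even though the norm restricted to $P$ says nothing about them. The standard repair --- pass to the dyadic parent of $P$, and bound a single coefficient by testing the supremum at $P=Q_{k,m}$ itself, which gives $2^{kn/2}|\lambda_{k,m}|\,t_{k,m,q}\,|Q_{k,m}|^{-1/q}\le\big\|\lambda\,|\dot{f}_{\infty,q}(\mathbb{R}^{n},\{t_{k}\})\big\|$, then sum the tails geometrically --- is precisely the part of the $F_{\infty}$ theory that distinguishes it from the case $p<\infty$, and it appears nowhere in your proposal.

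One smaller point: your remark that the identity $T_{\psi}\circ S_{\varphi}=\mathrm{id}$ ``is legitimate once the two operators are known to be bounded'' inverts the logic. The identity holds on all of $\mathcal{S}_{\infty}^{\prime }(\mathbb{R}^{n})$ as a consequence of \eqref{Ass3} and the convergence of the Calder\'on reproducing series in $\mathcal{S}_{\infty}^{\prime }(\mathbb{R}^{n})$, see \cite{FJ90}; the boundedness of $S_{\varphi}$ and $T_{\psi}$ is not what makes it true, it is what makes it useful on the function space.
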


\begin{corollary}
Let $0<\theta \leq q<\infty $. Let $\{t_{k}\}\in \dot{X}_{\alpha ,\sigma ,q}$
be a $q$-admissible weight sequence with $\sigma =(\sigma _{1}=\theta \left( 
\frac{q}{\theta }\right) ^{\prime },\sigma _{2}\geq q)$. The definition of
the spaces $\dot{F}_{\infty ,q}(\mathbb{R}^{n},\{t_{k}\})$ is independent of
the choices of $\varphi \in \mathcal{S}(\mathbb{R}^{n})$ satisfying $\mathrm{%
\eqref{Ass1}}$\ through\ $\mathrm{\eqref{Ass2}}$.
\end{corollary}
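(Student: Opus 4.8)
The plan is to derive the independence from the $\varphi$-transform characterization of Theorem \ref{phi-tran2}, by the standard Frazier--Jawerth device of factoring the identity through the analysing and synthesising operators of two different systems. Fix two functions $\varphi^{(0)},\varphi^{(1)}\in\mathcal{S}(\mathbb{R}^n)$, each satisfying \eqref{Ass1} and \eqref{Ass2}, and write $\dot F^{(i)}:=\dot F_{\infty,q}(\mathbb{R}^n,\{t_k\})$ for the space defined using $\varphi^{(i)}$. By \cite[Lemma (6.9)]{FrJaWe01} I would choose, for $i\in\{0,1\}$, a companion $\psi^{(i)}\in\mathcal{S}(\mathbb{R}^n)$ so that $(\varphi^{(i)},\psi^{(i)})$ satisfies \eqref{Ass1} through \eqref{Ass3}. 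The key structural point is that the sequence space $\dot f_{\infty,q}(\mathbb{R}^n,\{t_k\})$ is manifestly independent of any analysing function, so it can serve as a fixed reference model.

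First I would record the consequence of Theorem \ref{phi-tran2} for each fixed $i$: since $S_{\varphi^{(i)}}$ and $T_{\psi^{(i)}}$ are bounded and $T_{\psi^{(i)}}\circ S_{\varphi^{(i)}}=\mathrm{id}$ on $\dot F^{(i)}$, writing $f=T_{\psi^{(i)}}S_{\varphi^{(i)}}f$ yields the two-sided equivalence $\|f|\dot F^{(i)}\|\approx\|S_{\varphi^{(i)}}f|\dot f_{\infty,q}(\mathbb{R}^n,\{t_k\})\|$. Next, to compare the two systems, I would use $f=T_{\psi^{(1)}}S_{\varphi^{(1)}}f$ to factor $S_{\varphi^{(0)}}f=\bigl(S_{\varphi^{(0)}}\circ T_{\psi^{(1)}}\bigr)\bigl(S_{\varphi^{(1)}}f\bigr)$ and then chain these equivalences, obtaining $\|f|\dot F^{(0)}\|\approx\|S_{\varphi^{(0)}}f|\dot f_{\infty,q}\|\lesssim\|S_{\varphi^{(1)}}f|\dot f_{\infty,q}\|\approx\|f|\dot F^{(1)}\|$, where the middle inequality is the boundedness of the mixed matrix operator $S_{\varphi^{(0)}}\circ T_{\psi^{(1)}}$ on $\dot f_{\infty,q}(\mathbb{R}^n,\{t_k\})$. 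Exchanging the roles of $0$ and $1$ gives the reverse inequality, hence the equivalence of the two quasi-norms asserted by the corollary.

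The main obstacle is precisely this boundedness on the sequence space. The entries of $S_{\varphi^{(0)}}\circ T_{\psi^{(1)}}$ are the inner products $\langle\psi^{(1)}_{k,m},\varphi^{(0)}_{k',m'}\rangle$, and the Fourier-support conditions \eqref{Ass1}--\eqref{Ass2}, together with the fact that functions in $\mathcal{S}_\infty(\mathbb{R}^n)$ have all vanishing moments, force an almost-diagonal decay of these entries, both in the scale gap $|k-k'|$ and in the spatial separation of $Q_{k,m}$ and $Q_{k',m'}$. I expect this to be the very estimate that already underlies Theorem \ref{phi-tran2}; crucially, that estimate never exploited any relation between an analysing and a synthesising function beyond \eqref{Ass1}--\eqref{Ass3}, so it applies verbatim to the cross pair $\varphi^{(0)},\psi^{(1)}$. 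Converting the almost-diagonal bound into boundedness on $\dot f_{\infty,q}$ then proceeds exactly as in the proof of Theorem \ref{phi-tran2}, through the quasi-norm description of Proposition \ref{equi-q=infinity} and the weighted maximal inequalities of Lemmas \ref{key-estimate1} and \ref{key-estimate1.1}, so that no genuinely new ingredient is needed.
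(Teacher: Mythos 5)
Your proposal is correct and follows essentially the route the paper intends: the paper states this corollary with no standalone proof, as an immediate consequence of Theorem \ref{phi-tran2} (citing \cite{D20.3}), namely the standard Frazier--Jawerth device of factoring through the manifestly $\varphi$-independent sequence space $\dot{f}_{\infty,q}(\mathbb{R}^{n},\{t_{k}\})$ and exploiting that the boundedness estimates for $S_{\varphi}$ and $T_{\psi}$ never use any coupling between the analysing and synthesising functions beyond \eqref{Ass1}--\eqref{Ass2} (the relation \eqref{Ass3} entering only through the reproducing identity). Your packaging of the cross-system step as boundedness of the composite $S_{\varphi^{(0)}}\circ T_{\psi^{(1)}}$ on the sequence space (here in fact band-limited in scale, since the annular Fourier supports make the entries vanish for $|k-k'|\geq 2$) is the same observation in an equivalent arrangement.
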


\begin{remark}
\label{phi-tran1}Theorems \ref{phi-tran} and \ref{phi-tran2} can then be
exploited to obtain a variety of results for the $\dot{A}_{p,q}(\mathbb{R}%
^{n},\{t_{k}\})$ spaces, where arguments can be equivalently transferred to
the sequence space, which is often more convenient to handle. More
precisely, under the same hypothesis of Theorems \ref{phi-tran} and \ref%
{phi-tran2}, we obtain 
\begin{equation*}
\big\|\{\langle f,\varphi _{k,m}\rangle \}_{k\in \mathbb{Z},m\in \mathbb{Z}%
^{n}}|\dot{a}_{p,q}(\mathbb{R}^{n},\{t_{k}\})\big\|\approx \big\|f|\dot{A}%
_{p,q}(\mathbb{R}^{n},\{t_{k}\})\big\|.
\end{equation*}
\end{remark}

Let $0<\theta <p<\infty $. Let $\{t_{k}\}\in \dot{X}_{\alpha ,\sigma ,p}$ be
a $p$-admissible weight sequence with $\sigma =(\sigma _{1}=\theta \left( 
\frac{p}{\theta }\right) ^{\prime },\sigma _{2}\geq p)$ and $\alpha =(\alpha
_{1},\alpha _{2})\in \mathbb{R}^{2}$. For simplicity, in what follows, when $%
\alpha =(\alpha _{1},\alpha _{2})=(0,0)$ we write $\{t_{k}\}\in \dot{X}%
_{\sigma ,p}.$

Now, we state one of the main result of this section.

\begin{theorem}
\label{new-norm}Let $0<\theta <p<\infty ,0<q\leq \infty $ and $j\in \mathbb{Z%
}$. Let\ $\{t_{k}\}\in \dot{X}_{\sigma ,p}$ be a $p$-admissible weight
sequence with $\sigma =(\sigma _{1}=\theta \left( \frac{p}{\theta }\right)
^{\prime },\sigma _{2}\geq p)$. Then 
\begin{equation*}
\dot{A}_{p,q}(\mathbb{R}^{n},\{t_{k}\})=\dot{A}_{p,q}(\mathbb{R}^{n},t_{j})
\end{equation*}%
in the sense of equivalent quasi-norms.
\end{theorem}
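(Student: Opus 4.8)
The plan is to transfer the identity to the associated sequence spaces via the $\varphi$-transform of Theorem \ref{phi-tran} and there to exploit a single clean fact: when $\alpha=(0,0)$ the $L_p$-means of the weights $t_k$ over an arbitrary cube are all mutually comparable, uniformly in the index. First I would record that the constant sequence $\{t_j\}_{k\in\mathbb Z}$ (with $j$ fixed) again lies in $\dot X_{\sigma,p}$ with $\alpha=(0,0)$: taking $k=j$ in \eqref{Asum1} and \eqref{Asum2} gives $M_{Q,p}(t_j)M_{Q,\sigma_1}(t_j^{-1})\le C_1$ and $(M_{Q,p}(t_j))^{-1}M_{Q,\sigma_2}(t_j)\le C_2$, which are precisely the two conditions for the constant sequence. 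Hence Theorem \ref{phi-tran} and Proposition \ref{Equi-norm1} apply verbatim to both $\dot A_{p,q}(\mathbb R^n,\{t_k\})$ and $\dot A_{p,q}(\mathbb R^n,t_j)$.

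The crux is the uniform comparison of means: for every cube $Q$ and all $k,j\in\mathbb Z$ one has $M_{Q,p}(t_k)\approx M_{Q,p}(t_j)$ with constants depending only on $C_1,C_2,\theta,p$. Consider $k\le j$. From \eqref{Asum2} with $\alpha_2=0$ and $\sigma_2\ge p$ I get $M_{Q,p}(t_j)\le M_{Q,\sigma_2}(t_j)\le C_2\,M_{Q,p}(t_k)$. For the reverse inequality I combine \eqref{Asum1} with $\alpha_1=0$ and the Hölder estimate
\begin{equation*}
1=\frac{1}{|Q|}\int_Q t_j\,t_j^{-1}\,dx\le M_{Q,\sigma_1'}(t_j)\,M_{Q,\sigma_1}(t_j^{-1}),
\end{equation*}
which yields $M_{Q,p}(t_k)\le C_1\big(M_{Q,\sigma_1}(t_j^{-1})\big)^{-1}\le C_1\,M_{Q,\sigma_1'}(t_j)$; since $\tfrac{1}{\sigma_1}=\tfrac1\theta-\tfrac1p$ forces $\sigma_1'\le p$, a further Jensen step gives $M_{Q,\sigma_1'}(t_j)\le M_{Q,p}(t_j)$ and hence $M_{Q,p}(t_k)\le C_1 M_{Q,p}(t_j)$. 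The case $k>j$ is symmetric (apply \eqref{Asum1} and \eqref{Asum2} to the ordered pair $(j,k)$). Evaluating this equivalence on the dyadic cubes $Q=Q_{k,m}$ and multiplying by $|Q_{k,m}|^{1/p}$ produces the term-by-term equivalence of the discretized weights
\begin{equation*}
t_{k,m}=\big\|t_k|L_p(Q_{k,m})\big\|\approx\big\|t_j|L_p(Q_{k,m})\big\|,\qquad k\in\mathbb Z,\ m\in\mathbb Z^n,
\end{equation*}
with constants independent of $k,m$ and $j$.

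With this in hand the conclusion is immediate. For the Besov scale I insert the weight equivalence into the equivalent quasi-norm of Proposition \ref{Equi-norm1}(i), obtaining $\big\|\lambda|\dot b_{p,q}(\mathbb R^n,\{t_k\})\big\|^\ast\approx\big\|\lambda|\dot b_{p,q}(\mathbb R^n,t_j)\big\|^\ast$ for all admissible $\lambda$ and all $0<q\le\infty$, and Theorem \ref{phi-tran} (through $S_\varphi$ and $T_\psi$) transports this to the function-space quasi-norms. For the Triebel--Lizorkin scale with $0<q<\infty$ I argue identically through Proposition \ref{Equi-norm1}(ii), whose hypothesis \eqref{Asum1} at $j=k$ is part of the standing assumption; again $\varphi$-transforming back gives the claim.

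The two delicate points are, first, the uniform means comparison itself --- the only place where the special value $\alpha=(0,0)$ enters, and where one must check that the exponents furnished by the definition of $\dot X_{\sigma,p}$ satisfy $\sigma_1'\le p\le\sigma_2$ so that the Hölder and Jensen steps point in the right directions --- and, second, the endpoint $q=\infty$ in the $\dot F$-case, for which Proposition \ref{Equi-norm1}(ii) is not stated. For that endpoint I would reprove the equivalent quasi-norm for the $\dot f_{p,\infty}$ expression exactly as in Proposition \ref{Equi-norm1}(ii), but invoking the vector-valued maximal inequality of Lemma \ref{key-estimate1}, which is valid on the full range $1<q\le\infty$; the term-by-term weight equivalence then closes the argument as before. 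I expect the uniform means comparison to be the main obstacle, everything after it being routine substitution.
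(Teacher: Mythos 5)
Your route is genuinely different from the paper's, and in outline it is viable. The paper never isolates a weight-comparison lemma: it reruns the Frazier--Jawerth pointwise machinery (the estimate \eqref{phi2}, H\"older with exponents $\nu$ and $\sigma_1$, then \eqref{Asum1}--\eqref{Asum2} with $\alpha=(0,0)$) to obtain $M_{Q_{k,m},p}(t_k)\sup_{z\in Q_{k,m}}|\tilde{\varphi}_k\ast f(z)|\lesssim \mathcal{M}_\nu(t_j(\tilde{\varphi}_k\ast f))(x)$, and only then invokes Proposition \ref{Equi-norm1} and Theorem \ref{FS-inequality}; the weight comparison and the passage from coefficients to maximal functions are fused into one computation, done separately in each direction. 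You instead factor the theorem into (a) the purely measure-theoretic statement $M_{Q,p}(t_k)\approx M_{Q,p}(t_j)$ uniformly in $k,j$ and $Q$, (b) the observation that the constant sequence $\{t_j\}$ again lies in $\dot{X}_{\sigma,p}$ (correct: take $k=j$ in \eqref{Asum1}--\eqref{Asum2}), and (c) citations of Theorem \ref{phi-tran} and Proposition \ref{Equi-norm1} for both sequences. This is more modular, avoids redoing \eqref{phi2} altogether, and makes transparent that $\alpha=(0,0)$ enters only through the means comparison. Your caveat about $q=\infty$ is also well placed: Proposition \ref{Equi-norm1}(ii) is stated only for $0<q<\infty$, and in fact the paper's own proof applies it at $q=\infty$ without comment, so your proposed repair (rerun its proof using Lemma \ref{key-estimate1} and Theorem \ref{FS-inequality}, both valid at $q=\infty$) addresses a point the paper itself glosses over.

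There is, however, a genuine error in your means comparison. You deduce $\big(M_{Q,\sigma_1}(t_j^{-1})\big)^{-1}\le M_{Q,\sigma_1'}(t_j)\le M_{Q,p}(t_j)$ from the pairing $1\le M_{Q,\sigma_1'}(t_j)M_{Q,\sigma_1}(t_j^{-1})$ together with the claim that $\tfrac{1}{\sigma_1}=\tfrac1\theta-\tfrac1p$ forces $\sigma_1'\le p$. Since $\sigma_1=\theta\big(\tfrac p\theta\big)'=\tfrac{\theta p}{p-\theta}$, one has (when $\sigma_1\ge 1$) $\tfrac{1}{\sigma_1'}=1-\tfrac1\theta+\tfrac1p$, so $\sigma_1'\le p$ holds if and only if $\theta\ge 1$; worse, when $\theta<\tfrac{p}{p+1}$ one has $\sigma_1<1$ and the conjugate exponent $\sigma_1'$ is not even defined. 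The theorem allows any $0<\theta<p$, and the regime $\theta<1$ is precisely the one needed for small $p$, so as written this step fails. The fix is the inequality the paper records as \eqref{Holder}: applying H\"older with the exponents $\tfrac p\theta$ and $\big(\tfrac p\theta\big)'$ to the product $t_j^{\theta}\,t_j^{-\theta}$ gives
\begin{equation*}
1=\Big(\frac{1}{|Q|}\int_Q t_j^{\theta}(x)\,t_j^{-\theta}(x)\,dx\Big)^{\frac1\theta}\le M_{Q,p}(t_j)\,M_{Q,\sigma_1}(t_j^{-1}),
\end{equation*}
hence $\big(M_{Q,\sigma_1}(t_j^{-1})\big)^{-1}\le M_{Q,p}(t_j)$ directly, with no intermediate exponent and no restriction on $\theta$. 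Substituting this for your H\"older--Jensen step, your uniform comparison $M_{Q,p}(t_k)\approx M_{Q,p}(t_j)$ holds (the half coming from \eqref{Asum2} with $\sigma_2\ge p$ is fine as you wrote it), and the rest of your argument goes through.
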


\begin{proof}
As the proof for $\dot{B}_{p,q}(\mathbb{R}^{n},\{t_{k}\})$ is similar, we
only condider $\dot{F}_{p,q}(\mathbb{R}^{n},\{t_{k}\})$. We will do the
proof into two steps.

\textit{Step 1.} In this step we prove that 
\begin{equation}
\big\|f|\dot{F}_{p,q}(\mathbb{R}^{n},\{t_{k}\})\big\|\lesssim \big\|f|\dot{F}%
_{p,q}(\mathbb{R}^{n},t_{j})\big\|  \label{step1}
\end{equation}%
for any $f\in \dot{F}_{p,q}(\mathbb{R}^{n},t_{j})$, where the implicit
constant is independent of $j$. We have%
\begin{equation*}
\big\|f|\dot{F}_{p,q}(\mathbb{R}^{n},\{t_{k}\})\big\|\approx \big\|\{\langle
f,\varphi _{k,m}\rangle \}_{k\in \mathbb{Z},m\in \mathbb{Z}^{n}}|\dot{f}%
_{p,q}(\mathbb{R}^{n},\{t_{k}\})\big\|,
\end{equation*}%
see Remark \ref{phi-tran1}. In view of the definition of $\varphi _{k,m}$,
for every $k\in \mathbb{Z}$ and every $m\in \mathbb{Z}^{n}$ we have%
\begin{equation*}
|\langle f,\varphi _{k,m}\rangle |=2^{-\frac{kn}{2}}|f\ast \tilde{\varphi}%
_{k}(2^{-k}m)|\leq 2^{-\frac{kn}{2}}\sup_{z\in Q_{k,m}}|f\ast \tilde{\varphi}%
_{k}(z)|,
\end{equation*}%
where $\tilde{\varphi}=\varphi (-\cdot )$. We recall the following estimate
see (2.11) in \cite{FJ86}, 
\begin{equation}
\sup_{z\in Q_{k,m}}\left\vert \tilde{\varphi}_{k}\ast f(z)\right\vert
\lesssim 2^{k\frac{n}{\tau }}\Big(\sum_{h\in \mathbb{Z}^{n}}(1+\left\vert
h\right\vert )^{-M}\int_{Q_{k,m+h}}\left\vert \tilde{\varphi}_{k}\ast
f(y)\right\vert ^{\tau }dy\Big)^{\frac{1}{\tau }},  \label{phi2}
\end{equation}%
with $M>n+1,\tau >0$. If we choose $\frac{1}{\tau }=\frac{1}{\nu }+\frac{1}{%
\sigma _{1}}$, with $0<\nu <\min (p,q)$, then by H\"{o}lder's inequality we
obtain that 
\begin{equation}
2^{k\frac{n}{\tau }}\Big(\int_{Q_{k,m+h}}\left\vert \tilde{\varphi}_{k}\ast
f(y)\right\vert ^{\tau }dy\Big)^{\frac{1}{\tau }}\leq M_{Q_{k,m+h},\nu
}(t_{j}(\tilde{\varphi}_{k}\ast f))M_{Q_{k,m+h},\sigma _{1}}(t_{j}^{-1}).
\label{phi3}
\end{equation}%
From \eqref{Asum1},%
\begin{equation*}
M_{Q_{k,u},\sigma _{1}}(t_{j}^{-1})\leq C\big(M_{Q_{k,u},p}(t_{k})\big)^{-1},
\end{equation*}%
if $k\leq j$ and $u\in \mathbb{Z}^{n}$, where the positive constant $C$ is
independent of $j,k$ and $u$. Using the fact that%
\begin{equation*}
Q_{k,m}\subset B(x_{k,m},\sqrt{n}2^{-k})\quad \text{and}\quad
Q_{k,m+h}\subset B(x_{k,m},\sqrt{n}(1+\left\vert h\right\vert )2^{-k}),
\end{equation*}%
we conclude that%
\begin{align}
M_{Q_{k,m},p}(t_{k})M_{Q_{k,m+h},\sigma _{1}}(t_{j}^{-1})\leq &
(1+\left\vert h\right\vert )^{\frac{n}{\theta }%
}M_{Q_{k,m+h},p}(t_{k})M_{Q_{k,m+h},\sigma _{1}}(t_{j}^{-1})  \notag \\
\leq & c(1+\left\vert h\right\vert )^{\frac{n}{\theta }},  \label{phi1.1}
\end{align}%
where $c>0$ is independent of $j,k,h$ and $m$. Substituting \eqref{phi3},
with the help of \eqref{phi1.1}, into \eqref{phi2}, we obtain 
\begin{align*}
(M_{Q_{k,m},p}(t_{k}))^{\tau }(\sup_{z\in Q_{k,m}}\left\vert \tilde{\varphi}%
_{k}\ast f(z)\right\vert )^{\tau }\lesssim & \sum_{h\in \mathbb{Z}%
^{n}}(1+\left\vert h\right\vert )^{\frac{n\tau }{\theta }-M}\big(%
M_{Q_{k,m+h},\nu }(t_{j}(\tilde{\varphi}_{k}\ast f))\big)^{\tau } \\
\lesssim & (\mathcal{M}_{\nu }(t_{j}(\tilde{\varphi}_{k}\ast f))(x))^{\tau
}\sum_{h\in \mathbb{Z}^{n}}(1+\left\vert h\right\vert )^{\frac{n\tau }{%
\theta }+n\tau -M} \\
\lesssim & (\mathcal{M}_{\nu }(t_{j}(\tilde{\varphi}_{k}\ast f))(x))^{\tau }
\end{align*}%
for any $x\in Q_{k,m},k\leq j,m\in \mathbb{Z}^{n}$ and any $M$ large enough.
Now if $k>j$, then by \eqref{Asum2}, we derive that $M_{Q_{k,m},p}(t_{k})%
\lesssim M_{Q_{k,m},p}(t_{j})$. Therefore%
\begin{equation*}
M_{Q_{k,m},p}(t_{k})\sup_{z\in Q_{k,m}}\left\vert \tilde{\varphi}_{k}\ast
f(z)\right\vert \lesssim \mathcal{M}_{\nu }(t_{j}(\tilde{\varphi}_{k}\ast
f))(x)
\end{equation*}%
for any $x\in Q_{k,m},k\in \mathbb{Z},m\in \mathbb{Z}^{n}$. Observe that we
can apply Proposition \ref{Equi-norm1} and Theorem \ref{FS-inequality}.
Consequently%
\begin{align*}
& \big\|\{\langle f,\varphi _{k,m}\rangle \}_{k\in \mathbb{Z},m\in \mathbb{Z}%
^{n}}|\dot{f}_{p,q}(\mathbb{R}^{n},\{t_{k}\})\big\| \\
\approx & \Big\|\Big(\sum_{k=-\infty }^{\infty }\sum_{m\in \mathbb{Z}%
^{n}}2^{kn(\frac{1}{2}+\frac{1}{p})q}t_{k,m}^{q}|\langle f,\varphi
_{k,m}\rangle |^{q}\chi _{k,m}\Big)^{\frac{1}{q}}|L_{p}(\mathbb{R}^{n})\Big\|
\\
\lesssim & \Big\|\Big(\sum\limits_{k=-\infty }^{\infty }\big(\mathcal{M}%
_{\nu }(t_{j}(\tilde{\varphi}_{k}\ast f))\big)^{q}\Big)^{\frac{1}{q}}|L_{p}(%
\mathbb{R}^{n})\Big\| \\
\lesssim & \Big\|\Big(\sum\limits_{k=-\infty }^{\infty }|\tilde{\varphi}%
_{k}\ast f|^{q}\Big)^{\frac{1}{q}}t_{j}|L_{p}(\mathbb{R}^{n})\Big\| \\
\lesssim & \big\|f|\dot{F}_{p,q}(\mathbb{R}^{n},t_{j})\big\|,
\end{align*}%
where the implicit constant is independent of $j$.

\textit{Step 2.} In this step we prove the opposite inequality of %
\eqref{step1}. We have%
\begin{equation*}
\big\|f|\dot{F}_{p,q}(\mathbb{R}^{n},t_{j})\big\|\approx \big\|\{\langle
f,\varphi _{k,m}\rangle \}_{k\in \mathbb{Z},m\in \mathbb{Z}^{n}}|\dot{f}%
_{p,q}(\mathbb{R}^{n},t_{j})\big\|.
\end{equation*}%
We keep the estimate \eqref{phi2}. We choose again $\frac{1}{\tau }=\frac{1}{%
\nu }+\frac{1}{\sigma _{1}}$, with $0<\nu <\min (p,q)$. By H\"{o}lder's
inequality we obtain that 
\begin{equation*}
2^{k\frac{n}{\tau }}\Big(\int_{Q_{k,m+h}}\left\vert \tilde{\varphi}_{k}\ast
f(y)\right\vert ^{\tau }dy\Big)^{\frac{1}{\tau }}\leq M_{Q_{k,m+h},\nu
}(t_{k}(\tilde{\varphi}_{k}\ast f))M_{Q_{k,m+h},\sigma _{1}}(t_{k}^{-1}).
\end{equation*}%
From \eqref{Asum2}, $M_{Q_{k,m},p}(t_{j})\leq CM_{Q_{k,m},p}(t_{k})$ if $%
k\leq j$ and $m\in \mathbb{Z}^{n}$, where the positive constant $C$ is
independent of $j,k$ and $m$. Observe that%
\begin{equation*}
M_{Q_{k,m},p}(t_{k})M_{Q_{k,m+h},\sigma _{1}}(t_{k}^{-1})\lesssim
(1+\left\vert h\right\vert )^{\frac{n}{\theta }},
\end{equation*}%
where the implicit constant is independent of $j,k,h$ and $m$. We can argue
as in Step 1 and obtain%
\begin{equation}
M_{Q_{k,m},p}(t_{j})\sup_{z\in Q_{k,m}}\left\vert \tilde{\varphi}_{k}\ast
f(z)\right\vert \lesssim \mathcal{M}_{\nu }(t_{k}(\tilde{\varphi}_{k}\ast
f))(x)  \label{phi5}
\end{equation}%
for any $x\in Q_{k,m},k\leq j,m\in \mathbb{Z}^{n}$. Now if $k>j$, then by %
\eqref{Asum1}, we obtain that $M_{Q_{k,m+h},\sigma _{1}}(t_{k}^{-1})\leq C%
\big(M_{Q_{k,m+h},p}(t_{j})\big)^{-1}$. This yields the same estimate %
\eqref{phi5} for any $k>j$. Again we can apply Proposition \ref{Equi-norm1}
and Theorem\ \ref{FS-inequality} and find 
\begin{align*}
& \big\|\{\langle f,\varphi _{k,m}\rangle \}_{k\in \mathbb{Z},m\in \mathbb{Z}%
^{n}}|\dot{f}_{p,q}(\mathbb{R}^{n},t_{j})\big\| \\
\approx & \Big\|\Big(\sum_{k=-\infty }^{\infty }\sum_{m\in \mathbb{Z}%
^{n}}2^{k\frac{nq}{2}}(M_{Q_{k,m},p}(t_{j}))^{q}|\langle f,\varphi
_{k,m}\rangle |^{q}\chi _{k,m}\Big)^{\frac{1}{q}}|L_{p}(\mathbb{R}^{n})\Big\|
\\
\lesssim & \Big\|\Big(\sum\limits_{k=-\infty }^{\infty }\big(\mathcal{M}%
_{\nu }(t_{k}(\tilde{\varphi}_{k}\ast f))\big)^{q}\Big)^{\frac{1}{q}}|L_{p}(%
\mathbb{R}^{n})\Big\| \\
\lesssim & \Big\|\Big(\sum\limits_{k=-\infty }^{\infty }t_{k}^{q}|\tilde{%
\varphi}_{k}\ast f|^{q}\Big)^{\frac{1}{q}}|L_{p}(\mathbb{R}^{n})\Big\| \\
\lesssim & \big\|f|\dot{F}_{p,q}(\mathbb{R}^{n},\{t_{k}\})\big\|,
\end{align*}%
where the implicit constant is independent of $j$. This finishes the proof.
\end{proof}

Let $w$ be a locally integrable function, $w(x)>0$ for almost every $x$. Let 
$0<p<\infty $ and $H_{p}(\mathbb{R}^{n},w^{p})$ be the weighted Hardy
spaces, see \cite{ST89} for more details. Based on the relation between
weighted Hardy and Triebel-Lizorkin spaces, see \cite[Theorem 1.4]{Bui82},
Theorem \ref{new-norm}\ yields the following.

\begin{corollary}
\label{new-norm1}Let $0<\theta <p<\infty $\ and $j\in \mathbb{Z}$. Let\ $%
\{t_{k}\}\in \dot{X}_{\sigma ,p}$ be a $p$-admissible weight sequence with $%
\sigma =(\sigma _{1}=\theta \left( \frac{p}{\theta }\right) ^{\prime
},\sigma _{2}\geq p)$. Then 
\begin{equation}
\dot{F}_{p,2}(\mathbb{R}^{n},\{t_{k}\})=H_{p}(\mathbb{R}^{n},t_{j}^{p})
\label{triebel-hardy}
\end{equation}%
in the sense of equivalent quasi-norms. In particular%
\begin{equation}
\dot{F}_{p,2}(\mathbb{R}^{n},\{t_{k}\})=L_{p}(\mathbb{R}^{n},t_{j}),\quad
\max (1,\theta )<p<\infty  \label{triebel-lebesgue}
\end{equation}%
in the sense of equivalent norms.
\end{corollary}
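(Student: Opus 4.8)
The plan is to prove \eqref{triebel-hardy} by first collapsing the variable weight sequence $\{t_{k}\}$ to the single weight $t_{j}$ and then invoking the known identification of weighted Triebel--Lizorkin spaces with weighted Hardy spaces. The first step is to apply Theorem \ref{new-norm} in the case $q=2$, which gives
\[
\dot{F}_{p,2}(\mathbb{R}^{n},\{t_{k}\})=\dot{F}_{p,2}(\mathbb{R}^{n},t_{j})
\]
in the sense of equivalent quasi-norms, with constants independent of $j$. This reduces the whole statement to one about the genuinely constant-smoothness space $\dot{F}_{p,2}(\mathbb{R}^{n},t_{j})$, so the remaining task is purely a statement about the fixed weight $t_{j}$.

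Next I would check that $t_{j}^{p}$ is an admissible Muckenhoupt weight and match the two weight conventions. Since $\{t_{k}\}\in\dot{X}_{\sigma,p}$ with $\sigma_{1}=\theta\left(\frac{p}{\theta}\right)^{\prime}$, the first remark following Definition \ref{Tyulenev-class} (applied with $r=\theta$) yields $t_{k}^{p}\in A_{\frac{p}{\theta}}(\mathbb{R}^{n})$ for every $k$, in particular $t_{j}^{p}\in A_{\frac{p}{\theta}}(\mathbb{R}^{n})\subset A_{\infty}(\mathbb{R}^{n})$. Under the paper's convention $\|g|L_{p}(\mathbb{R}^{n},\gamma)\|=\|g\gamma|L_{p}(\mathbb{R}^{n})\|$, the quasi-norm of $\dot{F}_{p,2}(\mathbb{R}^{n},t_{j})$ is exactly the weighted Littlewood--Paley quasi-norm with \emph{measure weight} $t_{j}^{p}$. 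Hence \cite[Theorem 1.4]{Bui82}, which identifies the $\dot{F}^{0}_{p,2}$-space over an $A_{\infty}$ weight with the corresponding weighted Hardy space of \cite{ST89}, gives $\dot{F}_{p,2}(\mathbb{R}^{n},t_{j})=H_{p}(\mathbb{R}^{n},t_{j}^{p})$. Combined with the displayed equivalence this proves \eqref{triebel-hardy}.

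For \eqref{triebel-lebesgue} I would restrict to the range $\max(1,\theta)<p<\infty$ and use the classical fact that, for $1<p<\infty$ and $w\in A_{p}(\mathbb{R}^{n})$, the weighted Hardy space $H_{p}(\mathbb{R}^{n},w)$ coincides with $L_{p}(\mathbb{R}^{n},w)$ (equivalently, the Riesz transforms are bounded on $L_{p}(w)$). Here $w=t_{j}^{p}$, and the assumption $\max(1,\theta)<p$ forces $p>1$; when $\theta\ge 1$ it also gives $\frac{p}{\theta}\le p$, so that $t_{j}^{p}\in A_{\frac{p}{\theta}}\subset A_{p}$ by Lemma \ref{Ap-Property}(i). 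Substituting into \eqref{triebel-hardy} and reading back the convention $H_{p}(\mathbb{R}^{n},t_{j}^{p})=L_{p}(\mathbb{R}^{n},t_{j}^{p})=L_{p}(\mathbb{R}^{n},t_{j})$ then yields \eqref{triebel-lebesgue}. The step I expect to demand the most care is precisely this last Muckenhoupt bookkeeping at the Lebesgue endpoint, namely certifying $t_{j}^{p}\in A_{p}(\mathbb{R}^{n})$ throughout the stated range (the only nontrivial subcase being $\theta<1$, where one must argue via the critical index of $t_{j}^{p}$ rather than a direct inclusion), together with the careful translation between this paper's multiplicative weight convention and the measure-weight convention under which Bui's theorem and the Hardy--Lebesgue identification are usually formulated; once these are settled the corollary is immediate from Theorem \ref{new-norm}.
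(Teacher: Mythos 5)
Your argument follows exactly the paper's route: the paper proves \eqref{triebel-hardy} by combining Theorem \ref{new-norm} (with $q=2$) with the identification \cite[Theorem 1.4]{Bui82} of the $\dot{F}^{0}_{p,2}$-space over an $A_{\infty}$ weight with the weighted Hardy space of \cite{ST89}, which is precisely your first two steps, and your translation between the paper's multiplicative convention (weight $t_{j}$) and the measure convention (weight $t_{j}^{p}\in A_{p/\theta}(\mathbb{R}^{n})\subset A_{\infty}(\mathbb{R}^{n})$) is the correct reading. So \eqref{triebel-hardy}, and also \eqref{triebel-lebesgue} in the subcase $\theta \ge 1$, are settled by your argument and coincide with the paper's (very terse) proof.

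The genuine gap is the subcase $\theta<1$ of \eqref{triebel-lebesgue}, which you flag but propose to settle ``via the critical index of $t_{j}^{p}$''; no such argument can be completed, because the hypotheses do not force $t_{j}^{p}\in A_{p}(\mathbb{R}^{n})$ there. The assumption $\{t_{k}\}\in \dot{X}_{\sigma ,p}$ yields exactly $t_{j}^{p}\in A_{p/\theta}(\mathbb{R}^{n})$ uniformly (the case $k=j$ of \eqref{Asum1} with $\sigma _{1}=\theta (\frac{p}{\theta })^{\prime }$ is precisely this condition), and for $\theta<1$ one has $A_{p/\theta}(\mathbb{R}^{n})\supsetneq A_{p}(\mathbb{R}^{n})$, so the critical index of $t_{j}^{p}$ may lie in $[p,\frac{p}{\theta})$. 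Concretely, fix $1<p<\infty$, $0<\theta<1$, choose $\alpha$ with $n(p-1)\le \alpha <n(\frac{p}{\theta}-1)$, and take the constant sequence $t_{k}:=|x|^{\alpha /p}$, $k\in \mathbb{Z}$: then \eqref{Asum1} holds because $|x|^{\alpha }\in A_{p/\theta}(\mathbb{R}^{n})$, \eqref{Asum2} holds trivially with $\sigma _{2}=p$, so all hypotheses of the corollary are met; yet $|x|^{\alpha }\notin A_{p}(\mathbb{R}^{n})$, and for $p>1$ and an $A_{\infty}$ weight $w$ the equality $H_{p}(\mathbb{R}^{n},w)=L_{p}(\mathbb{R}^{n},w^{1/p})$ is classically equivalent to $w\in A_{p}(\mathbb{R}^{n})$ (it forces the Riesz transforms to be bounded on $L_{p}(w\,dx)$). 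Hence $H_{p}(\mathbb{R}^{n},t_{j}^{p})\ne L_{p}(\mathbb{R}^{n},t_{j})$ for this admissible datum, so the step you defer is not just delicate but false in that regime. In fairness, this defect sits in the paper's statement as much as in your proposal: the paper's own justification is a one-line citation of Theorem \ref{new-norm} and \cite{Bui82} which silently needs $t_{j}^{p}\in A_{p}(\mathbb{R}^{n})$, a condition that Lemma \ref{Ap-Property}(i) supplies exactly when $\theta \ge 1$; the clean formulation of \eqref{triebel-lebesgue} should either assume $\theta\ge 1$ or add $t_{j}^{p}\in A_{p}(\mathbb{R}^{n})$ as an explicit hypothesis.
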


Now we have the following necessary and sufficient conditions for the
coincidence of the weighted Lebesgue spaces $L_{p}(\mathbb{R}%
^{n},t_{i}),i\in \{1,2\}$.

\begin{theorem}
\label{Coincidence1}Let $\max (1,\theta )<p<\infty $ and $\sigma _{1}=\theta
(\frac{p}{\theta })^{\prime }$. Let\ $\{t_{1}^{p},t_{2}^{p}\}\in A_{\frac{p}{%
\theta }}(\mathbb{R}^{n})$. Then%
\begin{equation}
L_{p}(\mathbb{R}^{n},t_{1})=L_{p}(\mathbb{R}^{n},t_{2})  \label{Coincidence}
\end{equation}%
if and only if%
\begin{equation}
M_{Q,\sigma _{1}}(t_{1}^{-1})\approx M_{Q,\sigma _{1}}(t_{2}^{-1})\quad 
\text{and}\quad M_{Q,p}(t_{1})\approx M_{Q,p}(t_{2})  \label{Necessary}
\end{equation}%
for any dyadic cube $Q$ of $\mathbb{R}^{n}$. In particular, 
\begin{equation*}
L_{p}(\mathbb{R}^{n})=L_{p}(\mathbb{R}^{n},t_{1}),
\end{equation*}%
if and only if%
\begin{equation*}
M_{Q,\sigma _{1}}(t_{1}^{-1})\approx M_{Q,q}(t_{1})\approx 1
\end{equation*}%
for any dyadic cube $Q$ of $\mathbb{R}^{n}$.
\end{theorem}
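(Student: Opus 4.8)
The plan is to reduce both \eqref{Coincidence} and \eqref{Necessary} to the single pointwise comparison $t_1\approx t_2$ almost everywhere, and to show that the Muckenhoupt hypothesis $\{t_1^{p},t_2^{p}\}\in A_{\frac{p}{\theta}}(\mathbb{R}^{n})$ is exactly what forces the two averaged conditions in \eqref{Necessary} to be equivalent to one another. So the scheme is: (a) prove a two-sided bound that collapses \eqref{Necessary} to $M_{Q,p}(t_1)\approx M_{Q,p}(t_2)$; (b) characterize \eqref{Coincidence} by $t_1\approx t_2$ a.e.; (c) link the averaged and pointwise statements by Lebesgue differentiation.

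First I would record the normalization that ties the two averages together. Since $\sigma_1=\theta(\tfrac{p}{\theta})'=\tfrac{\theta p}{p-\theta}$ satisfies $\tfrac{\theta}{p}+\tfrac{\theta}{\sigma_1}=1$, H\"older's inequality applied to $1=\tfrac{1}{|Q|}\int_Q t_i^{\theta}t_i^{-\theta}\,dx$ with the conjugate exponents $\tfrac{p}{\theta}$ and $\tfrac{\sigma_1}{\theta}$ yields $M_{Q,p}(t_i)M_{Q,\sigma_1}(t_i^{-1})\ge 1$. Conversely, a direct computation of the exponents gives
\begin{equation*}
\big(M_{Q,p}(t_i)M_{Q,\sigma_1}(t_i^{-1})\big)^{p}=M_{Q}(t_i^{p})\,M_{Q,\frac{(p/\theta)'}{p/\theta}}\big((t_i^{p})^{-1}\big),
\end{equation*}
whose right-hand side is at most $A_{\frac{p}{\theta}}(t_i^{p})$ by \eqref{Ap-constant}. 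Hence $1\le M_{Q,p}(t_i)M_{Q,\sigma_1}(t_i^{-1})\le c$ for every cube $Q$ and $i\in\{1,2\}$, so $M_{Q,\sigma_1}(t_i^{-1})\approx (M_{Q,p}(t_i))^{-1}$. Consequently the two comparisons in \eqref{Necessary} are equivalent, and throughout it suffices to handle $M_{Q,p}(t_1)\approx M_{Q,p}(t_2)$.

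Next I would establish the elementary characterization that \eqref{Coincidence} holds if and only if $t_1\approx t_2$ a.e. The implication $t_1\approx t_2\Rightarrow$ \eqref{Coincidence} is immediate from $\|f|L_p(\mathbb{R}^{n},t_i)\|=\|ft_i|L_p(\mathbb{R}^{n})\|$. For the converse I would use that both spaces are Banach, so equality as sets forces $\|ft_2\|_p\approx\|ft_1\|_p$; substituting $g=ft_1$ turns the inclusion $\|ft_2\|_p\lesssim\|ft_1\|_p$ into the boundedness of multiplication by $t_2t_1^{-1}$ on $L_p(\mathbb{R}^{n})$, whence $t_2t_1^{-1}\le c$ a.e., and the symmetric inclusion gives $t_1t_2^{-1}\le c$ a.e.

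It then remains to pass between $t_1\approx t_2$ a.e. and \eqref{Necessary}. The forward direction is trivial, since comparable weights have comparable $L_p(Q)$- and $L_{\sigma_1}(Q)$-averages. For the reverse direction I would invoke the first two paragraphs to reduce \eqref{Necessary} to $M_{Q,p}(t_1)\approx M_{Q,p}(t_2)$, and then differentiate: since $t_i^{p}\in A_{\frac{p}{\theta}}(\mathbb{R}^{n})\subset L_{1}^{\mathrm{loc}}(\mathbb{R}^{n})$, Lebesgue's differentiation theorem along the dyadic cubes $Q_k(x)$ of side $2^{-k}$ containing $x$ gives $M_{Q_k(x),p}(t_i)\to t_i(x)$ for a.e.\ $x$ as $k\to\infty$; passing to the limit in the uniform comparison $M_{Q_k(x),p}(t_1)\approx M_{Q_k(x),p}(t_2)$ yields $t_1(x)\approx t_2(x)$ a.e. The ``in particular'' assertion is the specialization $t_2\equiv 1$, where $M_{Q,p}(t_2)=M_{Q,\sigma_1}(t_2^{-1})=1$, so \eqref{Necessary} reduces to $M_{Q,\sigma_1}(t_1^{-1})\approx M_{Q,p}(t_1)\approx 1$ (the exponent $q$ in the statement should read $p$). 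The only genuine obstacle, and the sole place the Muckenhoupt assumption is used, is the two-sided bound $M_{Q,p}(t_i)M_{Q,\sigma_1}(t_i^{-1})\approx 1$: it is what collapses the two seemingly independent conditions of \eqref{Necessary} into a single average comparison amenable to differentiation; the remaining steps are soft.
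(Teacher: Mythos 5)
Your proof is correct, but it takes a genuinely different and much more elementary route than the paper's. The paper stays inside its function-space machinery: sufficiency of \eqref{Necessary} is obtained from Corollary \ref{new-norm1} (hence from Theorem \ref{new-norm} and the $\varphi$-transform), since \eqref{Necessary} together with $t_i^{p}\in A_{\frac{p}{\theta}}(\mathbb{R}^n)$ makes $t_1,t_2$ into an admissible weight sequence of class $\dot{X}_{\sigma,p}$, so both weighted Lebesgue spaces coincide with one and the same Triebel--Lizorkin space; necessity is obtained by testing the norm equivalence against $f=\chi_Q$ and against $g=t_2^{-(p/\theta)'}\chi_Q$, where the $A_{\frac{p}{\theta}}$ hypothesis enters through the boundedness of $\mathcal{M}$ on $L_{\frac{p}{\theta}}(\mathbb{R}^n,t_2^{\theta})$, yielding the mixed conditions \eqref{Necessary1}, which H\"older's inequality \eqref{Holder} then converts into the first half of \eqref{Necessary}. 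You instead collapse both sides of the equivalence to the pointwise statement $t_1\approx t_2$ a.e.: the closed graph theorem plus the fact that a multiplication operator bounded on $L_p$ has essentially bounded symbol gives \eqref{Coincidence} $\Leftrightarrow$ $t_1\approx t_2$ a.e.; dyadic Lebesgue differentiation of $t_i^{p}\in L_{1}^{\mathrm{loc}}$ gives that $M_{Q,p}(t_1)\approx M_{Q,p}(t_2)$ over dyadic cubes forces $t_1\approx t_2$ a.e.; and your identity $\big(M_{Q,p}(t_i)M_{Q,\sigma_1}(t_i^{-1})\big)^{p}=M_Q(t_i^{p})\,M_{Q,\frac{(p/\theta)'}{p/\theta}}\big((t_i^{p})^{-1}\big)$, together with H\"older, shows the two halves of \eqref{Necessary} are equivalent to each other under the standing hypothesis (your exponent arithmetic checks out). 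You also correctly flag the typo $q\mapsto p$ in the ``in particular'' part. What each approach buys: yours is self-contained, needs no maximal inequalities or $\varphi$-transform, and in fact proves more --- either single condition of \eqref{Necessary} alone is equivalent to \eqref{Coincidence}, both being equivalent to $t_1\approx t_2$ a.e., so the two conditions stated in the theorem are redundant; the paper's heavier testing argument, by contrast, is exactly the one that survives when $L_p$ is replaced by a distribution space, and it is reused essentially verbatim for Theorem \ref{CoincidenceBesov-Triebel}, where your multiplication-operator and pointwise-differentiation arguments have no analogue. If you write yours up, make two steps explicit: the graph of the identity map between the two weighted spaces is closed because convergence in $L_p(\mathbb{R}^n,t_i)$ forces a.e.\ convergence along a subsequence, and the substitution $f=gt_1^{-1}$ is legitimate because $t_1>0$ a.e.
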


\begin{proof}
We proceed in two steps. Let $Q$ be a dyadic cube of $\mathbb{R}^{n}$.

\textit{Step 1.} In this step we prove that \eqref{Coincidence} holds if and
only if%
\begin{equation}
M_{Q,p}(t_{i})M_{Q,\sigma _{1}}(t_{j}^{-1})\leq C,\quad i,j\in \{1,2\}\quad 
\text{and}\quad M_{Q,p}(t_{1})\approx M_{Q,p}(t_{2}).  \label{Necessary1}
\end{equation}%
The sufficiency part follows by Corollary \ref{new-norm1}. Suppose that %
\eqref{Coincidence} is valid. By the function $f=\chi _{Q}$, the necessity
of the second condition of \eqref{Necessary1} is now evident. Let $g$ be a
locally integrable function on $\mathbb{R}^{n}$. As in \cite[7.1.1]{L.
Graf14}, we have%
\begin{equation*}
\big(M_{Q}(g^{\theta })\big)^{\frac{1}{\theta }}\leq \big(\mathcal{M}%
(g^{\theta })(x)\big)^{\frac{1}{\theta }},\quad x\in Q.
\end{equation*}%
Thus,%
\begin{align*}
\big(M_{Q}(g^{\theta })\big)^{\frac{1}{\theta }}\big\|1|L_{p}(Q,t_{1})\big\|%
\leq & \big\|\big(\mathcal{M}(g^{\theta })\big)^{\frac{1}{\theta }}|L_{p}(%
\mathbb{R}^{n},t_{1})\big\| \\
\lesssim & \big\|\big(\mathcal{M}(g^{\theta })\big)^{\frac{1}{\theta }%
}|L_{p}(\mathbb{R}^{n},t_{2})\big\| \\
=& c\big\|\mathcal{M}(g^{\theta })|L_{\frac{p}{\theta }}(\mathbb{R}%
^{n},t_{2}^{\theta })\big\|^{\frac{1}{\theta }} \\
\lesssim & \big\|g^{\theta }|L_{\frac{p}{\theta }}(\mathbb{R}%
^{n},t_{2}^{\theta })\big\|^{\frac{1}{\theta }} \\
=& c\big\|g|L_{p}(\mathbb{R}^{n},t_{2})\big\|,
\end{align*}%
where the four inequality follows by the fact that $t_{2}^{p}\in A_{\frac{p}{%
\theta }}(\mathbb{R}^{n})$.\ We choose $g=t_{2}^{-(\frac{p}{\theta }%
)^{\prime }}\chi _{Q}$. We deduce that%
\begin{equation*}
|Q|^{\frac{1}{p}}M_{Q,p}(t_{1})\big(M_{Q}(t_{2}^{-\sigma })\big)^{\frac{1}{%
\theta }}\Big(\int_{Q}t_{2}^{p-(\frac{p}{\theta })^{\prime }p}\Big)^{-\frac{1%
}{p}}\lesssim c.
\end{equation*}%
Since $p-(\frac{p}{\theta })^{\prime }p=-\sigma _{1}$ and $\frac{1}{\sigma }+%
\frac{1}{p}=\frac{1}{\theta }$, we get the necessity of $M_{Q,p}(t_{1})M_{Q,%
\sigma _{1}}(t_{2}^{-1})\leq C$. Interchanging the roles of $t_{2}$ and $%
t_{1}$, we obtain the necessity of the first condition of \eqref{Necessary1}.

\textit{Step 2.} We prove that \eqref{Coincidence} holds if and only if%
\begin{equation*}
M_{Q,\sigma _{1}}(t_{2}^{-1})\approx M_{Q,\sigma _{1}}(t_{1}^{-1}).
\end{equation*}%
Using \eqref{triebel-lebesgue}, the sufficiency part follows by a simple
modification of the proof of Theorem \ref{new-norm}. By H\"{o}lder's
inequality we obtain that 
\begin{equation}
1=\Big(\frac{1}{Q}\int_{Q}t_{1}^{\theta }(x)t_{1}^{-\theta }(x)\Big)^{\frac{1%
}{\theta }}\leq M_{Q,p}(t_{1})M_{Q,\sigma _{1}}(t_{1}^{-1}).  \label{Holder}
\end{equation}%
Multiplying both sides of \eqref{Holder}\ by $M_{Q,\sigma _{1}}(t_{2}^{-1})$
and using Step 1, we get%
\begin{equation*}
M_{Q,\sigma _{1}}(t_{2}^{-1})\lesssim M_{Q,\sigma _{1}}(t_{1}^{-1}).
\end{equation*}%
Interchanging the roles of $t_{2}$ and $t_{1}$, we obtain the necessity of
the first condition of \eqref{Necessary}. Theorem \ref{Coincidence1} is
proved.
\end{proof}

Now we have the following necessary and sufficient conditions for the
coincidence of the weighted function spaces $\dot{A}_{p,q}(\mathbb{R}%
^{n},t_{i}),i\in \{1,2\}$.

\begin{theorem}
\label{CoincidenceBesov-Triebel}Let $0<\theta <p<\infty $\ and\ $0<q\leq
\infty $. Let\ $\{t_{1}^{p},t_{2}^{p}\}\in A_{\frac{p}{\theta }}(\mathbb{R}%
^{n})$. Then%
\begin{equation}
\dot{A}_{p,q}(\mathbb{R}^{n},t_{1})=\dot{A}_{p,q}(\mathbb{R}^{n},t_{2})
\label{Coincidence2}
\end{equation}%
if and only if \eqref{Necessary} holds. In particular, 
\begin{equation*}
\dot{A}_{p,q}(\mathbb{R}^{n})=\dot{A}_{p,q}(\mathbb{R}^{n},t_{1}),
\end{equation*}%
if and only if%
\begin{equation*}
M_{Q,\sigma _{1}}(t_{1}^{-1})\approx M_{Q,q}(t_{1})\approx 1
\end{equation*}%
for any dyadic cube $Q$ of $\mathbb{R}^{n}$.
\end{theorem}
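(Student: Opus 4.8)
The plan is to transfer everything to the sequence-space side and read off the weight conditions there. First I would record the structural fact that both constant sequences $\{t_1\}_{k\in\mathbb{Z}}$ and $\{t_2\}_{k\in\mathbb{Z}}$ belong to $\dot{X}_{\sigma,p}$ with $\sigma=(\sigma_1=\theta(\frac{p}{\theta})',\sigma_2=p)$ and $\alpha=(0,0)$: writing out the Muckenhoupt condition for $t_i^p\in A_{p/\theta}(\mathbb{R}^n)$ with $\sigma_1=\frac{p\theta}{p-\theta}$, it reads exactly $M_{Q,p}(t_i)M_{Q,\sigma_1}(t_i^{-1})\le C$, i.e. \eqref{Asum1} with $\alpha_1=0$, while \eqref{Asum2} is trivial for $\sigma_2=p$, $\alpha_2=0$. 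Consequently Theorem \ref{phi-tran}, Remark \ref{phi-tran1} and Proposition \ref{Equi-norm1} all apply to each single-weight space. I would also note the companion bound $1\le M_{Q,p}(t_i)M_{Q,\sigma_1}(t_i^{-1})\le C$ (H\"older together with $A_{p/\theta}$), so that $M_{Q,p}(t_i)\approx\big(M_{Q,\sigma_1}(t_i^{-1})\big)^{-1}$ on every cube; in particular the two requirements in \eqref{Necessary} are equivalent to one another, and it suffices to work with $M_{Q,p}(t_1)\approx M_{Q,p}(t_2)$.

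For the sufficiency I would use Remark \ref{phi-tran1} to replace $\|f|\dot{A}_{p,q}(\mathbb{R}^n,t_i)\|$ by $\|\{\langle f,\varphi_{k,m}\rangle\}|\dot{a}_{p,q}(\mathbb{R}^n,t_i)\|$, and then the equivalent discrete quasi-norm of Proposition \ref{Equi-norm1} (its $q=\infty$ counterpart in the $\dot{F}$-case being furnished by the extensions in Lemmas \ref{key-estimate1} and \ref{key-estimate1.1}), in which $t_i$ enters only through $t_{i;k,m}:=\|t_i|L_p(Q_{k,m})\|=2^{-kn/p}M_{Q_{k,m},p}(t_i)$. If \eqref{Necessary} holds then $M_{Q_{k,m},p}(t_1)\approx M_{Q_{k,m},p}(t_2)$ uniformly in $(k,m)$, hence $t_{1;k,m}\approx t_{2;k,m}$ and the two discrete quasi-norms coincide in both the $\dot{B}$- and $\dot{F}$-cases, which is \eqref{Coincidence2}.

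The necessity is the substantive direction. Assuming \eqref{Coincidence2}, I would test the norm equivalence on the single building blocks $f=\psi_{k_0,m_0}$. The lower bound $\|\psi_{k_0,m_0}|\dot{A}_{p,q}(\mathbb{R}^n,t_i)\|\gtrsim 2^{k_0 n/2}\|t_i|L_p(Q_{k_0,m_0})\|$ is immediate from the sequence characterization by retaining only the $(k_0,m_0)$-term. For the matching upper bound I would pass to $\{\langle\psi_{k_0,m_0},\varphi_{k,m}\rangle\}$, which is supported in $|k-k_0|\le 1$ and decays rapidly in $m$; this rapid decay is played against the polynomial comparison of neighbouring averages $M_{Q_{k_0,m},p}(t_i)\lesssim(1+|m-m_0|)^{n/\theta}M_{Q_{k_0,m_0},p}(t_i)$ coming from $A_{p/\theta}$, exactly as in \eqref{phi1.1} in the proof of Theorem \ref{new-norm}. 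This yields $\|\psi_{k_0,m_0}|\dot{A}_{p,q}(\mathbb{R}^n,t_i)\|\approx 2^{k_0 n/2}\|t_i|L_p(Q_{k_0,m_0})\|$ with constants independent of $(k_0,m_0)$, so \eqref{Coincidence2} forces $M_{Q,p}(t_1)\approx M_{Q,p}(t_2)$ on every dyadic cube; the first relation in \eqref{Necessary} then follows from $M_{Q,p}(t_i)\approx\big(M_{Q,\sigma_1}(t_i^{-1})\big)^{-1}$. The particular case is obtained by specializing $t_2\equiv 1\in A_{p/\theta}(\mathbb{R}^n)$, for which $M_{Q,p}(t_2)=M_{Q,\sigma_1}(t_2^{-1})=1$.

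The main obstacle is precisely this upper bound in the necessity step: controlling the off-diagonal tails of the coefficients $\langle\psi_{k_0,m_0},\varphi_{k,m}\rangle$ against the growth of the weight so that the single-block norm is genuinely comparable to its leading term, uniformly in $(k_0,m_0)$. This is where the $A_{p/\theta}$ (equivalently $\dot{X}_{\sigma,p}$) hypothesis is indispensable, and it is the reason the test-function argument of Theorem \ref{Coincidence1} must be routed through the $\varphi$-transform rather than applied to $\chi_Q$ directly, since the present spaces live in $\mathcal{S}_\infty'(\mathbb{R}^n)$.
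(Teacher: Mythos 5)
Your proposal is correct, and its skeleton matches the paper's: both arguments transfer the problem to sequence spaces via Theorem \ref{phi-tran} and Remark \ref{phi-tran1}, and both extract the comparison $M_{Q,p}(t_1)\approx M_{Q,p}(t_2)$ on dyadic cubes by testing the assumed norm equivalence on a single building block $\psi_{k_0,m_0}$, i.e.\ on $T_{\psi}$ of a one-term sequence, with the lower bound obtained by retaining only the $(k_0,m_0)$ coefficient. You diverge in three places. (a) For sufficiency the paper invokes Theorem \ref{new-norm} (implicitly, one builds from $t_1,t_2$ a mixed admissible sequence in $\dot{X}_{\sigma,p}$ and identifies both single-weight spaces with the mixed-sequence space), whereas you compare the two discrete quasi-norms of Proposition \ref{Equi-norm1} directly through $t_{i;k,m}=2^{-kn/p}M_{Q_{k,m},p}(t_i)$; this is more self-contained, though your appeal to a $q=\infty$ version of Proposition \ref{Equi-norm1}(ii) via Lemmas \ref{key-estimate1} and \ref{key-estimate1.1} is an unproved (but plausible) extension --- the same one the paper itself leaves implicit in the $\dot{F}$-case of Theorem \ref{new-norm}. (b) For the matching upper bound on $\|\psi_{k_0,m_0}|\dot{A}_{p,q}(\mathbb{R}^{n},t_2)\|$ the paper simply applies the boundedness of $T_{\psi}:\dot{a}_{p,q}(\mathbb{R}^{n},t_2)\rightarrow\dot{A}_{p,q}(\mathbb{R}^{n},t_2)$ from Theorem \ref{phi-tran} to the delta sequence, so its norm is computed exactly; the hand estimate you propose (almost-orthogonality decay of $\langle\psi_{k_0,m_0},\varphi_{k,m}\rangle$ against the $(1+|m-m_0|)^{n/\theta}$ growth of averages, as in \eqref{phi1.1}) is correct but re-derives what Theorem \ref{phi-tran} already packages. (c) Your genuine improvement is the treatment of the first condition in \eqref{Necessary}: you observe that the standing hypothesis $t_i^{p}\in A_{\frac{p}{\theta}}(\mathbb{R}^{n})$ is literally the bound $M_{Q,p}(t_i)M_{Q,\sigma_1}(t_i^{-1})\le C$, which together with H\"older's inequality $1\le M_{Q,p}(t_i)M_{Q,\sigma_1}(t_i^{-1})$ gives $M_{Q,\sigma_1}(t_i^{-1})\approx\big(M_{Q,p}(t_i)\big)^{-1}$, so the two conditions in \eqref{Necessary} are equivalent to one another and the first follows at once from the second. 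The paper instead runs a separate maximal-function testing argument (the choice $g=t_2^{-(\frac{p}{\theta})^{\prime}}\chi_{Q}$ adapted from Step 1 of Theorem \ref{Coincidence1}) to reach the cross bound $M_{Q,p}(t_1)M_{Q,\sigma_1}(t_2^{-1})\le C$ and then the H\"older trick \eqref{Holder}; your observation shows that detour is unnecessary under the stated hypotheses, and it would equally shorten the proof of Theorem \ref{Coincidence1}.
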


\begin{proof}
The sufficiency part follows by Theorem \ref{new-norm}. Assume that %
\eqref{Coincidence2} holds. Let $k,k_{0}\in \mathbb{Z}$ and\ $m,m_{0}\in 
\mathbb{Z}^{n}$. Define 
\begin{equation*}
\lambda _{k,m}=\left\{ 
\begin{array}{ccc}
1, & \text{if} & (k,m)=(k_{0},m_{0}), \\ 
0, & \text{otherwise} & 
\end{array}%
\right.
\end{equation*}%
and $\lambda =\{\lambda _{k,m}\}_{k\in \mathbb{Z},m\in \mathbb{Z}^{n}}$. Let 
$\varphi $, $\psi \in \mathcal{S}(\mathbb{R}^{n})$ satisfying \eqref{Ass1}\
through\ \eqref{Ass3}. We put%
\begin{equation*}
T_{\psi }\lambda =\sum_{k\in \mathbb{Z}}\sum_{m\in \mathbb{Z}^{n}}\lambda
_{k,m}2^{k\frac{n}{2}}\psi (2^{k}x-m).
\end{equation*}%
Let $Q_{k_{0},m_{0}}$\ be a dyadic cube of $\mathbb{R}^{n}$. From Remark \ref%
{phi-tran1} , we get 
\begin{align*}
\big\|1|L_{p}(Q_{k_{0},m_{0}},t_{1})\big\|^{p}&
=\int_{Q_{k_{0},m_{0}}}t_{1}^{p}(x)dx \\
& =\frac{2^{k_{0}n}}{|\langle \psi ,\varphi \rangle |^{p}}%
\int_{Q_{k_{0},m_{0}}}|\langle \psi (2^{k_{0}}\cdot -m_{0}),\varphi
(2^{k_{0}}\cdot -m_{0})\rangle |^{p}t_{1}^{p}(x)dx \\
& \leq \frac{1}{|\langle \psi ,\varphi \rangle |^{p}}\big\|\{\langle T_{\psi
}\lambda ,\varphi _{j,h}\rangle \}_{j\in \mathbb{Z},h\in \mathbb{Z}^{n}}|%
\dot{a}_{p,q}(\mathbb{R}^{n},\{t_{1}\})\big\|^{p} \\
& \leq \frac{1}{|\langle \psi ,\varphi \rangle |^{p}}\big\|T_{\psi }\lambda |%
\dot{A}_{p,q}(\mathbb{R}^{n},\{t_{1}\})\big\|^{p} \\
& \lesssim \big\|T_{\psi }\lambda |\dot{A}_{p,q}(\mathbb{R}^{n},\{t_{2}\})%
\big\|^{p}.
\end{align*}%
Applying Theorem \ref{phi-tran1}, we obtain that 
\begin{align*}
\big\|T_{\psi }\lambda |\dot{A}_{p,q}(\mathbb{R}^{n},\{t_{2}\})\big\|&
\lesssim \big\|\{\lambda _{k,m}\}_{k\in \mathbb{Z},m\in \mathbb{Z}^{n}}|\dot{%
a}_{p,q}(\mathbb{R}^{n},\{t_{2}\})\big\| \\
& \lesssim \big\|1|L_{p}(Q_{k_{0},m_{0}},t_{2})\big\|.
\end{align*}%
Interchanging the roles of $t_{2}$ and $t_{1}$, we deduce%
\begin{equation*}
M_{Q_{k_{0},m_{0}},p}(t_{1})\approx M_{Q_{k_{0},m_{0}},p}(t_{2}),\quad
k_{0}\in \mathbb{Z},\quad m_{0}\in \mathbb{Z}.
\end{equation*}%
Now we establish the necessity of the first condition of \eqref{Necessary1}.
Similar as in Step 1 of the proof of Theorem \ref{Coincidence1}, we derive 
\begin{align*}
\big(M_{Q_{k_{0},m_{0}}}(g^{\theta })\big)^{\frac{1}{\theta }}\big\|%
1|L_{p}(Q_{k_{0},m_{0}},t_{1})\big\|& \leq \big(M_{Q_{k_{0},m_{0}}}(g^{%
\theta })\big)^{\frac{1}{\theta }}\big\|1|L_{p}(Q_{k_{0},m_{0}},t_{2})\big\|%
^{p} \\
& \lesssim \big\|\mathcal{M}(g^{\theta })|L_{\frac{p}{\theta }}(\mathbb{R}%
^{n},t_{2}^{\theta })\big\|^{\frac{1}{\theta }} \\
& \lesssim \big\|g^{\theta }|L_{\frac{p}{\theta }}(\mathbb{R}%
^{n},t_{2}^{\theta })\big\|^{\frac{1}{\theta }} \\
& =c\big\|g|L_{p}(\mathbb{R}^{n},t_{2})\big\|.
\end{align*}%
We choose $g=t_{2}^{-(\frac{p}{\theta })^{\prime }}\chi _{Q_{k_{0},m_{0}}}$.
We deduce that%
\begin{equation*}
|Q_{k_{0},m_{0}}|^{\frac{1}{p}}M_{Q_{k_{0},m_{0}},p}(t_{1})\big(%
M_{Q_{k_{0},m_{0}}}(t_{2}^{-\sigma })\big)^{\frac{1}{\theta }}\Big(%
\int_{Q_{k_{0},m_{0}}}t_{2}^{p-(\frac{p}{\theta })^{\prime }p}\Big)^{-\frac{1%
}{p}}\lesssim c,
\end{equation*}%
which yields the necessity of $%
M_{Q_{k_{0},m_{0}},p}(t_{1})M_{Q_{k_{0},m_{0}},\sigma }(t_{2}^{-1})\leq C$.
The rest is the same as in Step 2 of Theorem \ref{Coincidence1}.
\end{proof}

\begin{remark}
Clearly, we can deduce Theorem \ref{Coincidence1} by Theorem \ref%
{CoincidenceBesov-Triebel}. But to prove the necessity of \eqref{Necessary1}
for the coincidence of the weighted Lebesgue spaces $L_{p}(\mathbb{R}%
^{n},t_{i}),i\in \{1,2\}$\ we don't need to use the $\varphi $-transform
characterization of such spaces.
\end{remark}

From \eqref{triebel-hardy}\ and Theorem \ref{CoincidenceBesov-Triebel}, we
deduce the following conclusion.

\begin{corollary}
\label{CoincidenceHardy}Let $0<\theta <p<\infty $ and $\{t_{1}^{p},t_{2}^{p}%
\}\in A_{\frac{p}{\theta }}(\mathbb{R}^{n})$. Then%
\begin{equation*}
H_{p}(\mathbb{R}^{n},t_{1}^{p})=H_{p}(\mathbb{R}^{n},t_{2}^{p})
\end{equation*}%
if and only if \eqref{Necessary} holds. In particular, 
\begin{equation*}
H_{p}(\mathbb{R}^{n})=H_{p}(\mathbb{R}^{n},t_{1}^{p}),
\end{equation*}%
if and only if%
\begin{equation*}
M_{Q,\sigma _{1}}(t_{1}^{-1})\approx M_{Q,q}(t_{1})\approx 1
\end{equation*}%
for any dyadic cube $Q$ of $\mathbb{R}^{n}$.
\end{corollary}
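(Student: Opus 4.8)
The plan is to deduce the statement purely from the two facts already established for weighted Triebel--Lizorkin spaces, so that the corollary reduces to matching hypotheses. The key bridge is the identification \eqref{triebel-hardy}. Applied to the \emph{constant} weight sequence $\{t_i\}_{k\in\mathbb{Z}}$ (that is, $t_k=t_i$ for every $k$, so that $\dot{F}_{p,2}(\mathbb{R}^n,\{t_i\})$ is literally the fixed-weight space $\dot{F}_{p,2}(\mathbb{R}^n,t_i)$), it yields
\[
H_p(\mathbb{R}^n,t_i^p)=\dot{F}_{p,2}(\mathbb{R}^n,t_i),\qquad i\in\{1,2\}.
\]
Consequently $H_p(\mathbb{R}^n,t_1^p)=H_p(\mathbb{R}^n,t_2^p)$ holds, with equivalent quasi-norms, if and only if $\dot{F}_{p,2}(\mathbb{R}^n,t_1)=\dot{F}_{p,2}(\mathbb{R}^n,t_2)$.

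Before applying \eqref{triebel-hardy} I would check that each constant sequence $\{t_i\}$ belongs to $\dot{X}_{\sigma,p}$ with $\alpha=(0,0)$ and $\sigma=(\sigma_1=\theta(\tfrac{p}{\theta})',\,\sigma_2=p)$, which is exactly the setting of Corollary \ref{new-norm1}. For a constant sequence, condition \eqref{Asum1} with $\alpha_1=0$ reads $M_{Q,p}(t_i)\,M_{Q,\sigma_1}(t_i^{-1})\le C_1$ for all cubes $Q$; raising this product to the power $p$ and using $p/\sigma_1=\tfrac{p-\theta}{\theta}=\tfrac{p}{\theta}-1$ shows it to be precisely the Muckenhoupt condition $t_i^p\in A_{p/\theta}(\mathbb{R}^n)$ of our hypothesis (cf.\ the remark following Definition \ref{Tyulenev-class}). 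Condition \eqref{Asum2} with $\alpha_2=0$ and $\sigma_2=p$ is the trivial identity $(M_{Q,p}(t_i))^{-1}M_{Q,p}(t_i)=1$. Hence $\{t_i\}\in\dot{X}_{\sigma,p}$ and \eqref{triebel-hardy} applies.

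With the two identifications secured, I would invoke Theorem \ref{CoincidenceBesov-Triebel} with $\dot{A}=\dot{F}$ and $q=2$ (permissible since $0<q\le\infty$): under the standing assumption $\{t_1^p,t_2^p\}\in A_{p/\theta}(\mathbb{R}^n)$ it gives $\dot{F}_{p,2}(\mathbb{R}^n,t_1)=\dot{F}_{p,2}(\mathbb{R}^n,t_2)$ if and only if \eqref{Necessary} holds. Chaining this with the equivalence of the first paragraph proves the main assertion. For the ``in particular'' part I would specialize $t_2\equiv 1$: then $t_2^p=1\in A_{p/\theta}(\mathbb{R}^n)$ trivially, $H_p(\mathbb{R}^n,t_2^p)=H_p(\mathbb{R}^n)$, and $M_{Q,\sigma_1}(t_2^{-1})=M_{Q,p}(t_2)=1$, so that \eqref{Necessary} collapses to $M_{Q,\sigma_1}(t_1^{-1})\approx 1$ and $M_{Q,p}(t_1)\approx 1$, which is the stated condition.

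Since the genuine analysis is entirely absorbed into Theorem \ref{new-norm} (through \eqref{triebel-hardy}) and into Theorem \ref{CoincidenceBesov-Triebel}, the corollary is essentially an exercise in hypothesis matching. The only point demanding real care---and the step I would expect to be the main obstacle---is confirming that the \emph{fixed-weight} spaces $H_p(\mathbb{R}^n,t_i^p)$ and $\dot{F}_{p,2}(\mathbb{R}^n,t_i)$ are legitimately covered by statements formulated for weight \emph{sequences}: once the membership $\{t_i\}\in\dot{X}_{\sigma,p}$ is verified and the admissibility of $q=2$ in Theorem \ref{CoincidenceBesov-Triebel} is noted, the two cited results assemble at once.
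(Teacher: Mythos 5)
Your proof is correct and takes essentially the same route as the paper, whose own proof consists precisely of combining \eqref{triebel-hardy} with Theorem \ref{CoincidenceBesov-Triebel}. The only material you add---checking that the constant sequence $\{t_i\}$ is $p$-admissible and lies in $\dot{X}_{\sigma ,p}$ exactly because $t_i^{p}\in A_{\frac{p}{\theta }}(\mathbb{R}^{n})$, so that \eqref{triebel-hardy} legitimately yields $H_{p}(\mathbb{R}^{n},t_i^{p})=\dot{F}_{p,2}(\mathbb{R}^{n},t_i)$---is a hypothesis-matching detail the paper leaves implicit (and your reading of $M_{Q,q}(t_1)$ as $M_{Q,p}(t_1)$ correctly fixes an evident typo).
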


\begin{remark}
The question arises what about Hardy spaces with general weights. We present
the following possibility definition to such spaces. Let $\psi \in \mathcal{S%
}(\mathbb{R}^{n})$. We set $\psi _{k}=2^{kn}\psi (2^{k}\cdot ),k\in \mathbb{Z%
},$ 
\begin{equation*}
p_{N}(\varphi ):=\sup_{\beta \in \mathbb{N}_{0}^{n},|\beta |\leq
N}\sup_{x\in \mathbb{R}^{n}}|\partial ^{\beta }\varphi (x)|(1+|x|)^{N}<\infty
\end{equation*}%
for all $N\in \mathbb{N}$ and%
\begin{equation*}
\mathcal{F}_{N}:=\{\varphi \in \mathcal{S}(\mathbb{R}^{n}):p_{N}(\varphi
)\leq 1\}.
\end{equation*}%
Let $\{t_{k}\}$ be a $p$-admissible sequence i.e. $t_{k}\in L_{p}^{\mathrm{%
loc}}(\mathbb{R}^{n}),0<p<\infty ,k\in \mathbb{Z}$ and $f\in \mathcal{S}%
^{\prime }(\mathbb{R}^{n})$. Denote by $M$ the grand maximal operator given
by%
\begin{equation*}
M(f;\{t_{k}\}):=\sup_{k\in \mathbb{Z}}\{t_{k}|\psi _{k}\ast f|:\psi \in 
\mathcal{F}_{N}\},\quad N\in \mathbb{N}.
\end{equation*}%
Let $0<p<\infty $. Then we define\ the Hardy space $\tilde{H}_{p}(\mathbb{R}%
^{n},\{t_{k}\})$\ to be the set of all $f\in \mathcal{S}^{\prime }(\mathbb{R}%
^{n})$\ such that%
\begin{equation*}
\big\|f|\tilde{H}_{p}(\mathbb{R}^{n},\{t_{k}\})\big\|:=\big\|%
M(f;\{t_{k}\})|L_{p}(\mathbb{R}^{n})\big\|<\infty .
\end{equation*}
\end{remark}

Now, we state the last result of this section.

\begin{theorem}
\label{new-norm2}Let $0<\theta <q<\infty $ and $j\in \mathbb{Z}$. Let\ $%
\{t_{k}\}\in \dot{X}_{\sigma ,q}$ be a $q$-admissible weight sequence with $%
\sigma =(\sigma _{1}=\theta \left( \frac{q}{\theta }\right) ^{\prime
},\sigma _{2}\geq q)$. Then 
\begin{equation*}
\dot{F}_{\infty ,q}(\mathbb{R}^{n},\{t_{k}\})=\dot{F}_{\infty ,q}(\mathbb{R}%
^{n},t_{j})
\end{equation*}%
in the sense of equivalent quasi-norms.
\end{theorem}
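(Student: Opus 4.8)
The plan is to avoid the maximal-function machinery used for Theorem \ref{new-norm} and instead transfer everything to the sequence space via Theorem \ref{phi-tran2}, then compare the two discrete quasi-norms \emph{term by term}. Since $0<\theta<q<\infty$ throughout, Proposition \ref{equi-q=infinity} is available and replaces the pointwise weight $t_k^q(x)$ in $\dot{f}_{\infty,q}(\mathbb{R}^n,\{t_k\})$ by the local $L_q$-averages $t_{k,m,q}=\big\|t_k|L_q(Q_{k,m})\big\|$. First I would check that the constant sequence $\{t_j\}_{k\in\mathbb{Z}}$ (which is what $\dot{F}_{\infty,q}(\mathbb{R}^n,t_j)$ refers to) is itself admissible for Theorem \ref{phi-tran2}: it is $q$-admissible because $t_j\in L_q^{\mathrm{loc}}$, and taking the running index equal to $j$ in \eqref{Asum1} gives $M_{Q,q}(t_j)M_{Q,\sigma_1}(t_j^{-1})\le C_1$ for every cube $Q$, which is exactly the $A_{\frac{q}{\theta}}(\mathbb{R}^n)$ condition for $t_j^q$; with $\sigma_2=q$ (for which \eqref{Asum2} is trivial) one gets $\{t_j\}\in\dot{X}_{\sigma,q}$. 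Hence Theorem \ref{phi-tran2} and Remark \ref{phi-tran1} apply to \emph{both} weight sequences and yield
\begin{equation*}
\big\|f|\dot{F}_{\infty,q}(\mathbb{R}^n,\{t_k\})\big\|\approx\big\|\{\langle f,\varphi_{k,m}\rangle\}|\dot{f}_{\infty,q}(\mathbb{R}^n,\{t_k\})\big\|,\qquad\big\|f|\dot{F}_{\infty,q}(\mathbb{R}^n,t_j)\big\|\approx\big\|\{\langle f,\varphi_{k,m}\rangle\}|\dot{f}_{\infty,q}(\mathbb{R}^n,t_j)\big\|.
\end{equation*}

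The heart of the argument is the scale-free equivalence $M_{Q,q}(t_k)\approx M_{Q,q}(t_j)$ for all $k,j\in\mathbb{Z}$ and every cube $Q$, with constants independent of $k,j,Q$. For $k\le j$ I would argue in two directions. The generalized H\"older inequality with $\frac1q+\frac1{\sigma_1}=\frac1\theta$ gives $1=M_{Q,\theta}(t_jt_j^{-1})\le M_{Q,q}(t_j)M_{Q,\sigma_1}(t_j^{-1})$, so multiplying \eqref{Asum1} (here $\alpha_1=0$) by $M_{Q,q}(t_j)$ produces $M_{Q,q}(t_k)\le C_1M_{Q,q}(t_j)$; conversely $\sigma_2\ge q$ gives $M_{Q,q}(t_j)\le M_{Q,\sigma_2}(t_j)$, and \eqref{Asum2} (here $\alpha_2=0$) then yields $M_{Q,q}(t_j)\le C_2M_{Q,q}(t_k)$. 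The case $k>j$ is symmetric, applying \eqref{Asum1} and \eqref{Asum2} with the roles of $k$ and $j$ interchanged. Evaluating the resulting equivalence at $Q=Q_{k,m}$ gives
\begin{equation*}
t_{k,m,q}=\big\|t_k|L_q(Q_{k,m})\big\|\approx\big\|t_j|L_q(Q_{k,m})\big\|
\end{equation*}
uniformly in $k\in\mathbb{Z}$ and $m\in\mathbb{Z}^n$.

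With this the conclusion is immediate. By Proposition \ref{equi-q=infinity} the two sequence quasi-norms are assembled from exactly the quantities $t_{k,m,q}^q$ and $\big\|t_j|L_q(Q_{k,m})\big\|^q$; since these agree up to a constant independent of $k,m$, the entire integrand inside the supremum over $P\in\mathcal{Q}$ is comparable term by term, whence $\big\|\lambda|\dot{f}_{\infty,q}(\mathbb{R}^n,\{t_k\})\big\|\approx\big\|\lambda|\dot{f}_{\infty,q}(\mathbb{R}^n,t_j)\big\|$ for every admissible $\lambda$. Taking $\lambda=\{\langle f,\varphi_{k,m}\rangle\}$ and chaining with the two $\varphi$-transform identities of the first paragraph gives $\big\|f|\dot{F}_{\infty,q}(\mathbb{R}^n,\{t_k\})\big\|\approx\big\|f|\dot{F}_{\infty,q}(\mathbb{R}^n,t_j)\big\|$, as asserted.

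The two delicate points are exactly the ones highlighted: that the constant sequence $\{t_j\}$ qualifies for Theorem \ref{phi-tran2} (so the right-hand side admits a $\varphi$-transform description at all), and that the equivalence of local averages is uniform in $k$ and $j$ \emph{simultaneously}. I expect the uniform equivalence to be the main obstacle, since it is precisely where the normalization $\alpha=(0,0)$ enters essentially; for $\alpha\ne(0,0)$ the averages would differ by a genuine factor $2^{(k-j)\alpha_i}$ and the two spaces would no longer coincide. I would deliberately avoid the maximal-function route of Theorem \ref{new-norm}, because on the $\dot{F}_\infty$ scale it would require a Fefferman--Stein-type inequality adapted to the Carleson structure of \eqref{norm}, which is considerably harder than the direct sequence-space comparison above.
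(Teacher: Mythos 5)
Your proposal is correct, and it takes a genuinely different route from the paper. The paper proves Theorem \ref{new-norm2} by rerunning the machinery of Theorem \ref{new-norm} inside the Carleson structure of \eqref{norm}: it starts from the Frazier--Jawerth estimate \eqref{phi2}, uses H\"older's inequality together with \eqref{Asum1}--\eqref{Asum2} to dominate $M_{Q_{k,m},q}(t_k)\sup_{z\in Q_{k,m}}|\tilde{\varphi}_k\ast f(z)|$ by the maximal function $\mathcal{M}_{\nu }\big(t_j(\tilde{\varphi}_k\ast f)\chi_{B(x_P,2^{i-k_P})}\big)$, and then combines Proposition \ref{equi-q=infinity} with the Fefferman--Stein inequality (Theorem \ref{FS-inequality}) and a summation in $h$ to close Step 1, referring Step 2 back to Theorem \ref{new-norm}; so the ``Fefferman--Stein adapted to the Carleson structure'' difficulty you chose to avoid is exactly what the paper carries out, via the localization to balls $B(x_P,2^{i-k_P})$. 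You instead push everything into the sequence spaces: the uniform two-sided comparison $M_{Q,q}(t_k)\approx M_{Q,q}(t_j)$ (obtained from \eqref{Asum1}, \eqref{Asum2}, $\sigma_2\ge q$, and the H\"older inequality $1\le M_{Q,q}(t_j)M_{Q,\sigma_1}(t_j^{-1})$ --- the same trick as \eqref{Holder} in the proof of Theorem \ref{Coincidence1}, deployed earlier) shows via Proposition \ref{equi-q=infinity} that the two discrete quasi-norms are comparable term by term, and Theorem \ref{phi-tran2}/Remark \ref{phi-tran1}, applied to both weight sequences, transfers this to the function spaces. Both of your ``delicate points'' check out: the identification of \eqref{Asum1} with $k=j$ as the $A_{\frac{q}{\theta}}(\mathbb{R}^n)$ condition for $t_j^q$ is exactly Remark 2.5(i) of the paper, so the constant sequence $\{t_j\}$ (with $\sigma_2=q$) does lie in $\dot{X}_{\sigma,q}$ --- a fact the paper itself needs implicitly in its Step 2, where the $\varphi$-transform characterization is invoked for the weight $t_j$; and the equivalence of averages is uniform in $k,j,Q$ because its constants are just $C_1,C_2$ from Definition \ref{Tyulenev-class}. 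What each approach buys: yours is shorter, avoids all maximal-function estimates, keeps the independence of the constants on $j$ completely visible, and isolates precisely where $\alpha=(0,0)$ enters; the paper's route is heavier, but its Step 1 estimates $\big\|f|\dot{F}_{\infty,q}(\mathbb{R}^n,\{t_k\})\big\|$ directly against the continuous norm of $f$ in $\dot{F}_{\infty,q}(\mathbb{R}^n,t_j)$ without discretizing the constant-weight space, and its maximal-function template is the one that powers the rest of the paper (Lemmas \ref{key-estimate1} and \ref{key-estimate1.1}, Theorem \ref{phi-tran}), so it is the more flexible tool even though, for this particular coincidence statement, your argument is the more economical one.
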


\begin{proof}
Proceed in two steps.

\textit{Step 1.} We will prove the estimate: 
\begin{equation}
\big\|f|\dot{F}_{\infty ,q}(\mathbb{R}^{n},\{t_{k}\})\big\|\lesssim \big\|f|%
\dot{F}_{\infty ,q}(\mathbb{R}^{n},t_{j})\big\|  \label{q=infinity}
\end{equation}%
for any $f\in \dot{F}_{\infty ,q}(\mathbb{R}^{n},t_{j})$, where the implicit
constant is independent of $j$. We have%
\begin{equation*}
\big\|f|\dot{F}_{\infty ,q}(\mathbb{R}^{n},\{t_{k}\})\big\|\approx \big\|%
\{\langle f,\varphi _{k,m}\rangle \}_{k\in \mathbb{Z},m\in \mathbb{Z}^{n}}|%
\dot{f}_{\infty ,q}(\mathbb{R}^{n},\{t_{k}\})\big\|.
\end{equation*}%
As in Theorem \ref{new-norm} for every $k\in \mathbb{Z}$ and every $m\in 
\mathbb{Z}^{n}$ we have%
\begin{equation*}
|\langle f,\varphi _{k,m}\rangle |\leq 2^{-\frac{kn}{2}}\sup_{z\in
Q_{k,m}}|f\ast \tilde{\varphi}_{k}(z)|,
\end{equation*}%
where $\tilde{\varphi}=\varphi (-\cdot )$. We keep the estimate \eqref{phi2}%
. We choose $\frac{1}{\tau }=\frac{1}{\nu }+\frac{1}{\sigma _{1}}$, with $%
0<\nu <q$. By H\"{o}lder's inequality we obtain that 
\begin{equation}
2^{k\frac{n}{\tau }}\Big(\int_{Q_{k,m+h}}\left\vert \tilde{\varphi}_{k}\ast
f(y)\right\vert ^{\tau }dy\Big)^{\frac{1}{\tau }}\leq M_{Q_{k,m+h},\nu
}(t_{j}(\tilde{\varphi}_{k}\ast f))M_{Q_{k,m+h},\sigma _{1}}(t_{j}^{-1}).
\label{q=infinity1}
\end{equation}%
{Let $x\in Q_{k,m}\subset P\in $}$\mathcal{Q}${\ and }$y\in Q_{k,m+h}${.
Then }%
\begin{align*}
\left\vert y-x_{P}\right\vert \leq & \left\vert y-x\right\vert +\left\vert
x-x_{P}\right\vert \\
\leq & (2\sqrt{n}+|h|)2^{-k}+\sqrt{n}2^{-k_{P}} \\
\leq & (3\sqrt{n}+|h|)2^{-k_{P}},\quad k_{P}=-\log _{2}l(P).
\end{align*}%
Let $i\in \mathbb{N}$ be such that $2^{i-1}\leq 3\sqrt{n}+|h|<2^{i}$. We
have that $y$ is located in the ball $B(x_{P},2^{i-k_{P}})$. Therefore the
right-hand side of \eqref{q=infinity1} does not exceed%
\begin{equation*}
M_{Q_{k,m+h},\nu }(t_{j}(\tilde{\varphi}_{k}\ast f)\chi
_{B(x_{P},2^{i-k_{P}})})M_{Q_{k,m+h},\sigma _{1}}(t_{j}^{-1}).
\end{equation*}%
We set $g(h)=(1+\left\vert h\right\vert )^{\frac{n\tau }{\theta }+n\tau -M}$
and $\omega (h)=(1+\left\vert h\right\vert )^{\frac{n\tau }{\theta }+n\tau
-M-\frac{\tau n}{q}},h\in \mathbb{R}^{n}$. Repeating the argument in the
proof of Theorem \ref{new-norm} we obtain that%
\begin{align*}
& (M_{Q_{k,m},q}(t_{k}))^{\tau }(\sup_{z\in Q_{k,m}}\left\vert \tilde{\varphi%
}_{k}\ast f(z)\right\vert )^{\tau } \\
\lesssim & \sum_{h\in \mathbb{Z}^{n}}(1+\left\vert h\right\vert )^{\frac{%
n\tau }{\theta }-M}\big(M_{Q_{k,m+h},\nu }(t_{j}(\tilde{\varphi}_{k}\ast
f)\chi _{B(x_{P},2^{i-k_{P}})})\big)^{\tau } \\
\lesssim & \sum_{h\in \mathbb{Z}^{n}}g(h)(\big(\mathcal{M}_{\nu }(t_{j}(%
\tilde{\varphi}_{k}\ast f)\chi _{B(x_{P},2^{i-k_{P}})})(x)\big)^{\tau }
\end{align*}%
for any $x\in Q_{k,m}{\subset P\in }\mathcal{Q},k\geq -\log _{2}l(P),m\in 
\mathbb{Z}^{n}$. Consequently%
\begin{equation*}
2^{k\frac{nq}{2}}\big((M_{Q_{k,m},q}(t_{k}))^{\tau }|\langle f,\varphi
_{k,m}\rangle |^{\tau }\big)^{\frac{q}{\tau }}\lesssim \Big(\sum_{h\in 
\mathbb{Z}^{n}}g(h)\big(\mathcal{M}_{\nu }(t_{j}(\tilde{\varphi}_{k}\ast
f)\chi _{B(x_{P},2^{i-k_{P}})})(x)\big)^{\tau }\Big)^{\frac{q}{\tau }}
\end{equation*}%
for any $x\in Q_{k,m}{\subset P\in }\mathcal{Q},k\geq -\log _{2}l(P),m\in 
\mathbb{Z}^{n}$ and%
\begin{align*}
& \Big(\sum\limits_{k=-\log _{2}l(P)}^{\infty }\sum_{m\in \mathbb{Z}^{n}}2^{k%
\frac{nq}{2}}\big((M_{Q_{k,m},q}(t_{k}))^{\tau }|\langle f,\varphi
_{k,m}\rangle |^{\tau }\big)^{\frac{q}{\tau }}\chi _{Q_{k,m}\cap P}\Big)^{%
\frac{\tau }{q}} \\
& \lesssim \sum_{h\in \mathbb{Z}^{n}}g(h)\Big(\sum\limits_{k=-\log
_{2}l(P)}^{\infty }\big(\mathcal{M}_{\nu }(t_{j}(\tilde{\varphi}_{k}\ast
f)\chi _{B(x_{P},2^{i-k_{P}})})\big)^{q}\Big)^{\frac{\tau }{q}}.
\end{align*}%
Hence, by Proposition \ref{equi-q=infinity} and Theorem\ \ref{FS-inequality}%
, we obtain%
\begin{align*}
& \big\|\{\langle f,\varphi _{k,m}\rangle \}_{k\in \mathbb{Z},m\in \mathbb{Z}%
^{n}}|\dot{f}_{\infty ,q}(\mathbb{R}^{n},\{t_{k}\})\big\|^{\tau } \\
& =\sup_{P\in \mathcal{Q}}\frac{1}{|P|^{\frac{\tau }{q}}}\Big\|\Big(%
\sum\limits_{k=-\log _{2}l(P)}^{\infty }\sum_{m\in \mathbb{Z}^{n}}2^{kn\frac{%
q}{2}}\big(M_{Q_{k,m},q}(t_{k}))^{\tau }|\langle f,\varphi _{k,m}\rangle
|^{\tau }\big)^{\frac{q}{\tau }}\chi _{Q_{k,m}\cap P}\Big)^{\frac{\tau }{q}%
}|L_{\frac{q}{\tau }}(\mathbb{R}^{n})\Big\| \\
& \lesssim \sum_{h\in \mathbb{Z}^{n}}g(h)\sup_{P\in \mathcal{Q}}\frac{1}{%
|P|^{\frac{\tau }{q}}}\Big\|\Big(\sum\limits_{k=k_{P}-i}^{\infty }\big(%
\mathcal{M}_{\nu }(t_{j}(\tilde{\varphi}_{k}\ast f)\chi
_{B(x_{P},2^{i-k_{P}})})\big)^{q}\Big)^{\frac{\tau }{q}}|L_{\frac{q}{\tau }}(%
\mathbb{R}^{n})\Big\| \\
& \lesssim \sum_{h\in \mathbb{Z}^{n}}\omega (h)\sup_{P\in \mathcal{Q}}\frac{1%
}{|B(x_{P},2^{i-k_{P}})|^{\frac{\tau }{q}}}\Big\|\Big(\sum%
\limits_{k=k_{P}-i}^{\infty }\big(t_{j}(\tilde{\varphi}_{k}\ast f)\chi
_{B(x_{P},2^{i-k_{P}})}\big)^{q}\Big)^{\frac{1}{q}}|L_{q}(\mathbb{R}^{n})%
\Big\|^{\tau } \\
& \lesssim \big\|f|\dot{F}_{\infty ,q}(\mathbb{R}^{n},t_{j})\big\|^{\tau },
\end{align*}%
by taking $M$ large enough, where the implicit constant is independent of $j$%
.

\textit{Step 2.} As in Step 1, the opposite inequality of \eqref{q=infinity}
follow by the arguments of Step 2 of Theorem \ref{new-norm}. This finishes
the proof.
\end{proof}

Let $BMO$ be the space of functions of bounded mean oscillation. From
Theorem \ref{new-norm2} and by the same arguments as used in proof of
Theorem\ \ref{CoincidenceBesov-Triebel}, we deduce the following conclusion.

\begin{corollary}
\label{BMO}Let $0<\theta <q<\infty ,\sigma _{1}=\theta (\frac{q}{\theta }%
)^{\prime }$ and $\{t_{1}^{q},t_{2}^{q}\}\in A_{\frac{q}{\theta }}(\mathbb{R}%
^{n})$. Then%
\begin{equation*}
\dot{F}_{\infty ,q}(\mathbb{R}^{n},t_{1})=\dot{F}_{\infty ,q}(\mathbb{R}%
^{n},t_{2})
\end{equation*}%
if and only if \eqref{Necessary} holds. In particular, if $t^{q}\in A_{\frac{%
q}{\theta }}(\mathbb{R}^{n})$, then 
\begin{equation*}
BMO=\dot{F}_{\infty ,q}(\mathbb{R}^{n},t),
\end{equation*}%
holds if and only if%
\begin{equation*}
M_{Q,\sigma _{1}}(t^{-1})\approx M_{Q,q}(t)\approx 1
\end{equation*}%
for any dyadic cube $Q$ of $\mathbb{R}^{n}$.
\end{corollary}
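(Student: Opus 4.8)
The plan is to transcribe, with $p$ replaced by $q$ throughout (so the exponent in \eqref{Necessary} is read as $q$) and $\dot{A}_{p,q}$ replaced by $\dot{F}_{\infty,q}$, the two-part scheme already used for Theorems~\ref{new-norm2} and~\ref{CoincidenceBesov-Triebel}.

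For the sufficiency, assuming \eqref{Necessary}, I would manufacture a single $q$-admissible sequence interpolating between $t_1$ and $t_2$, say $t_k=t_1$ for even $k$ and $t_k=t_2$ for odd $k$, and check that $\{t_k\}\in\dot{X}_{\sigma,q}$. Since $t_i^q\in A_{q/\theta}(\mathbb{R}^n)$, unwinding the $A_{q/\theta}$ condition for the weight $t_i^q$ and using $\sigma_1=\theta(q/\theta)'$ with $\tfrac1{\sigma_1}+\tfrac1q=\tfrac1\theta$ gives $M_{Q,q}(t_i)M_{Q,\sigma_1}(t_i^{-1})\le C$ for $i\in\{1,2\}$; combined with \eqref{Necessary} this yields \eqref{Asum1} with $\alpha_1=0$, while \eqref{Necessary} with $\sigma_2=q$ gives \eqref{Asum2} with $\alpha_2=0$. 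Theorem~\ref{new-norm2} then gives $\dot{F}_{\infty,q}(\mathbb{R}^n,\{t_k\})=\dot{F}_{\infty,q}(\mathbb{R}^n,t_j)$ for every $j$; taking $j$ even, resp.\ odd, identifies the common space with $\dot{F}_{\infty,q}(\mathbb{R}^n,t_1)$, resp.\ $\dot{F}_{\infty,q}(\mathbb{R}^n,t_2)$, which is the asserted coincidence.

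For the necessity I would run the single-atom argument of Theorem~\ref{CoincidenceBesov-Triebel} in the $\dot{f}_{\infty,q}$ setting. Taking $\lambda$ with $\lambda_{k,m}=1$ for $(k,m)=(k_0,m_0)$ and $0$ otherwise, the coincidence together with the boundedness of $S_\varphi,T_\psi$ (Theorem~\ref{phi-tran2} and Remark~\ref{phi-tran1}) forces $\|1|L_q(Q_{k_0,m_0},t_1)\|\approx\|1|L_q(Q_{k_0,m_0},t_2)\|$, that is $M_{Q,q}(t_1)\approx M_{Q,q}(t_2)$ for every dyadic $Q$ — the second half of \eqref{Necessary}. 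The one step needing fresh care is the value of the single-atom quasi-norm: in the supremum \eqref{norm} only dyadic cubes $P\supseteq Q_{k_0,m_0}$ (those with $k_0\ge-\log_2 l(P)$) contribute, and since $|Q_{k_0,m_0}|/|P|\le1$ the supremum is attained at $P=Q_{k_0,m_0}$, giving $\|\lambda|\dot{f}_{\infty,q}(\mathbb{R}^n,t_i)\|=2^{k_0 n/2}M_{Q_{k_0,m_0},q}(t_i)$.

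For the first half I would repeat Step~1 of Theorem~\ref{Coincidence1}: since $\big(M_Q(g^\theta)\big)^{1/\theta}\le\big(\mathcal{M}(g^\theta)(x)\big)^{1/\theta}$ for $x\in Q$,
\begin{equation*}
\big(M_Q(g^\theta)\big)^{\frac1\theta}\big\|1|L_q(Q,t_1)\big\|\lesssim\big\|\mathcal{M}(g^\theta)|L_{q/\theta}(\mathbb{R}^n,t_2^\theta)\big\|^{\frac1\theta}\lesssim\big\|g|L_q(\mathbb{R}^n,t_2)\big\|,
\end{equation*}
where the first inequality uses $M_{Q,q}(t_1)\approx M_{Q,q}(t_2)$ and the second is the boundedness of $\mathcal{M}$ on $L_{q/\theta}(t_2^q\,dx)$, granted by $t_2^q\in A_{q/\theta}(\mathbb{R}^n)$. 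Inserting $g=t_2^{-(q/\theta)'}\chi_Q$ and using $\tfrac1{\sigma_1}+\tfrac1q=\tfrac1\theta$ collapses this to $M_{Q,q}(t_1)M_{Q,\sigma_1}(t_2^{-1})\le C$, and the symmetric choice gives the companion bound. Finally Hölder's inequality gives $1\le M_{Q,q}(t_i)M_{Q,\sigma_1}(t_i^{-1})$; multiplying by $M_{Q,\sigma_1}(t_{i'}^{-1})$ and feeding in the two bounds yields $M_{Q,\sigma_1}(t_1^{-1})\approx M_{Q,\sigma_1}(t_2^{-1})$, the first half of \eqref{Necessary}. The ``in particular'' assertion follows by taking $t_2\equiv1$ and using $\dot{F}_{\infty,q}(\mathbb{R}^n,1)=\dot{F}^0_{\infty,q}(\mathbb{R}^n)=BMO$. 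The only genuine obstacle is the single-atom norm evaluation forced by the $\sup_P$ with cut-off $k\ge-\log_2 l(P)$ in \eqref{norm}; the remainder is the substitution $p\mapsto q$ in the proofs already given.
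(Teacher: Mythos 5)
Your proposal is correct and follows essentially the same route as the paper, whose entire proof of Corollary \ref{BMO} is the one-line reduction you carry out in detail: sufficiency via Theorem \ref{new-norm2} (your even/odd interleaving of $t_{1},t_{2}$, with the verification that the resulting sequence lies in $\dot{X}_{\sigma ,q}$ thanks to \eqref{Necessary} and the $A_{\frac{q}{\theta }}$ conditions, is exactly the way that theorem gets invoked), and necessity via the single-atom and maximal-function arguments of Theorem \ref{CoincidenceBesov-Triebel} transcribed to the $\dot{f}_{\infty ,q}$ setting, where your evaluation of the single-atom quasi-norm under the cut-off $k\geq -\log _{2}l(P)$ supplies the one detail the paper leaves implicit. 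The only caveat is the final identification $\dot{F}_{\infty ,q}(\mathbb{R}^{n},1)=BMO$, which classically holds for $q=2$; but this is inherited from the paper's own statement and implicit proof, not a gap you introduced.
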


\subsection*{Acknowledgements}

This work was supported by the General Direction of Higher Education and
Training under Grant No. C00L03UN280120220004 and by The General Directorate
of Scientific Research and Technological Development, Algeria.

\end{document}